\documentclass[12pt,a4paper]{amsart}
\usepackage[utf8]{inputenc}
\usepackage{fancyhdr}
\usepackage{mathpazo}
\usepackage{setspace}
\usepackage[all]{xy}
\usepackage[active]{srcltx}
\usepackage{amsmath,amssymb}
\usepackage{mathtools}
\usepackage[a4paper, margin=2.5cm]{geometry}
\usepackage{longtable}
\usepackage{xcolor}
\usepackage{microtype}
\usepackage{amstext}
\usepackage{amsfonts}
\usepackage{amsmath}
\usepackage{graphicx}
\usepackage{afterpage,float,amsmath}
\usepackage{graphicx}
\usepackage{enumitem}
\usepackage{upgreek}
\usepackage{tikz}
\usepackage{multirow}
\usepackage{caption} 
\usepackage{mathrsfs}
\usepackage{amsthm}
\usepackage{hyperref} 
\usepackage{tikz-cd}
\usepackage{tkz-graph}
\usepackage{array}   
\usepackage{multirow} 
\usepackage{booktabs} 
\usepackage[normalem]{ulem}
\renewcommand{\S}{\mathcal{S}}

\newtheorem*{maintheorem}{Main Theorem}

\newtheorem{theorem}{Theorem}[section] 

\newtheorem{lemma}[theorem]{Lemma}      
\newtheorem{proposition}[theorem]{Proposition}

\theoremstyle{definition}
\newtheorem{definition}[theorem]{Definition} 
\theoremstyle{remark}
\usepackage{xcolor}

\newcommand{\citet}[2]{#1 \cite{#2}}
\newcommand{\M}{{\mathcal M}}
\newcommand{\E}{{\mathcal E}}
\newcommand{\F}{{\mathcal F}}
\newcommand{\N}{{\mathcal N}}
\newcommand{\Oo}{{\mathcal O}}

\newcommand{\R}{{\mathcal R}}

\renewcommand{\P}{\mathbb{P}}

\newcommand{\Pt}{\mathbb{P}^3}

\DeclareMathOperator{\Hom}{Hom}

\DeclareMathOperator{\coker}{coker}

\DeclareMathOperator{\Ext}{Ext}
\DeclareMathOperator{\rk}{{rk}}

\DeclareMathOperator{\opThree}{\mathcal{O}_{\mathbb{P}^3}}

\newcommand{\lhom}{{\mathcal H}{\it om}}
\newcommand{\lext}{{\mathcal E}{\it xt}}

\newcommand{\sing}{\operatorname{Sing}}

\newcommand{\onto}{\twoheadrightarrow}

\newcommand{\p}[1]{{\mathbb{P}^{#1}}}

\newcommand\restr[2]{{
  \left.\kern-\nulldelimiterspace 
  #1
  \vphantom{\big|} 
  \right|_{#2} 
  }}

\title[]{The moduli space of torsion-free sheaves with quasi-maximal third Chern class}

\author{Charles Almeida}
\address{ICEx - UFMG \\
Department of Mathematics,   Av. Ant\^onio Carlos, 6627\\
30123-970 Belo Horizonte, MG, Brazil}
\email{charlesalmeida@mat.ufmg.br}

\author{Marcos Jardim }
\address{IMECC - UNICAMP, Departamento de Matemática, Rua Sérgio Buarque de Holanda,
651, 13083-970 Campinas-SP, Brazil}
\email{jardim@ime.unicamp.br}

\author{Leonardo Oliveira}
\address{IMECC - UNICAMP, Departamento de Matemática, Rua Sérgio Buarque de Holanda,
651, 13083-970 Campinas-SP, Brazil}
\email{l237277@dac.unicamp.br}

\begin{document}

\begin{abstract}

In this paper, we investigate the geometry of the Gieseker moduli space of semistable rank $2$ torsion-free sheaves on $\mathbb{P}^3$ with Chern classes $(c_1, c_2, c_3) = (-1, c_2, c_2^2-2)$ for $c_2 \geq 2$.  We use the modular Serre correspondence to relate these moduli spaces to spaces of pairs and to suitable Hilbert schemes of one-dimensional subschemes in $\mathbb{P}^3$. For $c_2 \geq 4$, we prove that the moduli space is irreducible of dimension $c_2^2+3c_2+5$. For the case $c_2 = 3$, relying on a known geometric description of a relevant Hilbert scheme, we show that the moduli space $\M(-1,3,7)$ consists of exactly two irreducible components, namely the generic component of reflexive sheaves and a $T$-component, and we prove that their intersection is nonempty.

 	\medskip
   	 
   	 \noindent
   	 \textbf{Keywords:} Moduli Spaces; Hilbert Schemes; Torsion free sheaves.
   	 
   	 \medskip
   	 
   	 \noindent
   	 \textbf{Mathematics Subject Classification 2020:} 14J60, 14F06,14D20 .
\end{abstract}

\maketitle

\section{Introduction}

Let $\mathcal{M}(c_1,c_2,c_3)$ denote the Gieseker-Maruyama moduli scheme that parametrizes semistable rank $2$ sheaves on the projective space $\mathbb{P} ^3$, with $c_i$ denoting the $i$-th Chern class. We further consider the open subscheme  $\mathcal{R}(c_1,c_2,c_3)$ of $\mathcal{M}(c_1,c_2,c_3)$ corresponding to stable reflexive sheaves. 

Although the existence and projectivity of $\mathcal{M}(c_1,c_2,c_3)$ was proved by Maruyama, the geometry of these spaces remains largely unexplored. In contrast to the case of sheaves on $\mathbb{P} ^2$, where the moduli space is always irreducible, the spaces $\mathcal{M}(c_1,c_2,c_3)$ have a higher degree of complexity. In particular, the works \cite{jardim2017infinite} and \cite{almeida2022irreducible} show via concrete examples that these moduli spaces can have irreducible components of different dimensions.

 The Chern classes of semistable rank $2$ sheaves on $\mathbb{P}^3$ satisfy well-known restrictions. Up to normalization, we may assume $c_1 \in \{-1,0\}$, while $c_2 \geq 0$ by Bogomolov inequality and $c_1c_2 \equiv c_3 \pmod{2}$. In \cite{hartshorne1980reflexive}, R. Hartshorne proved that for fixed first and second Chern classes, there exists an upper bound for the third Chern class of stable rank $2$ reflexive sheaves on $\mathbb{P} ^3$. More precisely, Theorem 8.2 in \cite{hartshorne1980reflexive} shows that a necessary condition for $\mathcal{R}(c_1,c_2,c_3) \neq \varnothing$ is $c_2 \ge 0$ and $c_3 \leq c_{2} ^2 - c_2 +2$ if $c_1=0$, and $c_2>0$ and $c_3\leq c_2^2$ if $c_1=-1$. Furthermore, Hartshorne proved that  the moduli spaces $\mathcal{R}(c_1,c_2,c_{3, \max})$ are irreducible, smooth, and rational and Chang \cite{chang1983large} proved the same for $\mathcal{R}(0,c_2,c_{3, \max})$ for $c_2 \geq 4$. Okonek and Spindler \cite{Okonek1985Spektrum} proved that the bounds remain the same for torsion-free sheaves and computed the moduli spaces of sheaves with maximal $c_3$ for $c_2 \geq 6$, proving that these spaces are irreducible, smooth, rational, and projective varieties. B. Schmidt \cite{schmidt2020rank}, using different techniques, computed the moduli spaces of torsion-free sheaves with maximal $c_3$ for all $c_2$.

Below the maximal bound, the possible third Chern classes for reflexive sheaves on $\mathbb{P}^3$ admit significant gaps. In 1983, Chang \cite{chang1983large} proved that for given $c_2 > 0$ there are no stable rank $2$ reflexive sheaves on $\mathbb{P}^3$ with $c=(-1,c_2,c_3)$ where $$c_2^2-2c_2+4 <c_3 < c_2^2.$$ These gaps were subsequently generalized by Miró-Roig \cite{miro-roig1985gaps}. Specifically, defining $b(c_2) \coloneqq \frac{-1+\sqrt{4c_2-7}}{2}$, she proved that, for $c_2 \geq 4$ and $1 \leq r \leq b(c_2)$ there are no stable rank $2$ reflexive sheaves on $\mathbb{P}^3$ with $c_1=-1$ and $$c_2^2 -2r c_2+2(r+1)r < c_3 < c_2^2-2(r-1)c_2.$$

The authors of \cite{almeida2022irreducible} used the smoothness of certain moduli spaces of reflexive sheaves to produce new components for other moduli spaces, with lower third Chern classes. Indeed, for $s \geq 1$, by performing elementary transformations of sheaves in $\mathcal{R}(-1,c_2,c_2^2)$, along zero-dimensional subschemes of $\mathbb{P}^3$ of length $s$, it is possible to construct an irreducible component, denoted by $\operatorname{T}(-1,c_2,c_2^2,s)$, in $\M (-1,c_2,c_2^2-2s)$. The main goal of this paper is to prove that for $s=1$ and $c_2 \geq 4$, this is the only component of the moduli space. More precisely, we prove the following.

\begin{maintheorem}\label{maintheorem}
Let $\mathcal{M}(c_1,c_2,c_3)$ denote the Gieseker--Maruyama moduli space of rank 2 semistable torsion-free sheaves on $\P ^3$ with $i$-th Chern classes $c_i$; assume $c_1\in\{-1,0\}$. Then:

\begin{enumerate}
\item[(a)] $\M(c_1,c_2,c_3) \neq \varnothing$ if and only if
$c_1c_2 \equiv c_3 \pmod{2}$ and:

\begin{enumerate}
\item[(a1)] $c_1=c_2=0$, $c_3=-2n$ for $n\ge0$ and $n \neq 1$;

\item[(a2)] $c_1=0$, $c_2=1$, $c_3=-2n$ for $n\ge0$;

\item[(a3)] $c_1=0$, $c_2>1$ and $c_3\leq c_2^2-c_2+2$;

\item[(a4)] $c_1=-1$, $c_2>0$ and $c_3\leq c_2^2$.
\end{enumerate}

\item[(b)] $\M(-1,c,c^2-2)$ is irreducible, generically smooth and rational of dimension $c^2+3c+5$ when $c \geq 4$.

\item[(c)] $\M(-1,2,2)$ is connected and has exactly two generically smooth, rational irreducible components of dimension $11$ and $15$.

\item[(d)] $\M(-1,3,7)$ is connected and has exactly two irreducible generically smooth, rational components of dimension $19$ and $23$.
\end{enumerate}
\end{maintheorem}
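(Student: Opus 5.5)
The argument has four parts, one for each item of the Main Theorem.

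\emph{Part (a).} Necessity is the parity condition $c_1c_2\equiv c_3 \pmod 2$ together with the known bounds. Bogomolov gives $c_2\ge 0$ (and $c_2>0$ when $c_1=-1$). The Hartshorne bounds $c_3\le c_2^2-c_2+2$ and $c_3\le c_2^2$ were shown by Okonek--Spindler to hold for torsion-free sheaves as well. For a semistable torsion-free $E$, I will write $0\to E\to E^{\vee\vee}\to Q\to 0$. Then $c_3(E)=c_3(E^{\vee\vee})-2\,\mathrm{length}(Q_0)+(\text{curve contributions})$, so every value of $c_3$ below the bound is obtained from a reflexive (or $\mu$-semistable) double dual.

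For the small cases $c_1=0$, $c_2\le1$, I will classify directly. When $c_2=0$ the double dual is $\mathcal{O}^{\oplus 2}$ and $E$ is the kernel of a surjection onto a zero-dimensional sheaf, giving $c_3=-2n$. The value $n=1$ is excluded because a quotient of length one makes $E$ destabilized by $\mathcal{O}$: its Hilbert polynomial is too small relative to that of $\mathcal{O}$. When $c_2=1$ one uses the null-correlation bundle and elementary transformations.

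Sufficiency: for each allowed value I take a reflexive sheaf with the maximal $c_3$, or one produced by the Serre correspondence, and perform elementary transformations along $n$ points to lower $c_3$ by $2n$. Stability is preserved when $c_1=-1$, or more generally when the double dual is stable.

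\emph{Part (b).} Let $E\in\M(-1,c,c^2-2)$. There are two cases.
\begin{itemize}
\item If $E$ is reflexive, Chang's gap result excludes it as soon as $c^2-2c+4<c^2-2$, i.e.\ $c\ge 4$.
\item Otherwise $E^{\vee\vee}$ is stable reflexive with $c_3(E^{\vee\vee})\ge c^2-2$, hence $c_3(E^{\vee\vee})=c^2$, and $Q$ is a single point. The alternative, $Q$ supported in dimension one, would lower $c_2$.
\end{itemize}
So every point of the moduli space lies in $\operatorname{T}(-1,c,c^2,1)$. This is the irreducible family of kernels of $F\to\mathcal{O}_p$ with $F\in\R(-1,c,c^2)$, and $\R(-1,c,c^2)$ is smooth, irreducible and rational by Hartshorne.

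The dimension is
\[
\dim\R(-1,c,c^2)+3+1 = (c^2+3c+1)+4 = c^2+3c+5 .
\]
To see this I will check that $\dim\R(-1,c,c^2)=c^2+3c+1$ via its Serre-correspondence description. The $+3+1$ counts the choice of point $p$ together with the surjection $\mathbb{P}(F_p^\vee)\cong\mathbb{P}^1$. Rationality follows since the total space is a $\mathbb{P}^1$-bundle over $\R\times\mathbb{P}^3$, or over the incidence variety. Generic smoothness follows by computing $\Ext^2(E,E)=0$ at a general point using the sequence $0\to E\to F\to\mathcal{O}_p\to0$.

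\emph{Parts (c) and (d).} Here Chang's gap no longer applies, so reflexive sheaves exist, giving $\R(-1,2,2)$ and $\R(-1,3,7)$. I will describe them by the Serre correspondence: sections of $E(1)$ or $E(2)$ vanish on curves, so each $\R$ is identified with an open set of pairs over a Hilbert scheme of curves of suitable degree and genus. Using the known description of the relevant Hilbert scheme (for $c_2=3$, the one quoted in the abstract), I will show the reflexive locus is irreducible and compute its closure dimension, $11$ and $19$ respectively. The non-reflexive locus is $\operatorname{T}(-1,c,c^2,1)$ as in (b), of dimension $c^2+3c+5$, which equals $15$ and $23$. These two families exhaust the space.

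Connectedness requires a sheaf in the closure of the reflexive component that is not reflexive. I would construct a one-parameter family of curves in which the associated curve degenerates, for instance acquiring an embedded point or a singular point at which the section drops, so that the limiting sheaf has a length-one quotient $Q$. I expect this degeneration, needed to show the intersection is nonempty, to be the main obstacle, especially for $c=3$.
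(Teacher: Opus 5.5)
Your plan has a genuine gap in the classification of non-reflexive sheaves, and (b), (c) and (d) all depend on it. The sentence ``$Q$ supported in dimension one would lower $c_2$'' is not a contradiction. It only says $c_2(E^{\vee\vee})<c$, and such sheaves occur for every $c\geq2$. Here is an example. A sheaf $F'\in\R(-1,c-1,(c-1)^2)$ is an extension $0\to\Oo_{\Pt}(-1)^{\oplus2}\to F'\to\Oo_H(1-c)\to0$ for some plane $H$. For a line $L\subset H$, the composite $F'\to\Oo_L(1-c)$ is onto, and its kernel $G$ is $\mu$-stable with Chern classes $(-1,c,c^2)$. For $p\notin H$, the sheaf $E=\ker(G\to\Oo_p)$ is stable with Chern classes $(-1,c,c^2-2)$. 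Yet $E^{\vee\vee}=F'$ has $c_3=(c-1)^2$, and $E^{\vee\vee}/E\simeq\Oo_L(1-c)\oplus\Oo_p$.

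Consequences for each item:
\begin{enumerate}
\item[(b)] For $c\geq4$, not every point is an elementary transformation of a sheaf in $\R(-1,c,c^2)$. The conclusion $\M(v_c)=\operatorname{T}(-1,c,c^2,1)$ is true, but your route would need two things you do not supply. First, a deformation argument placing all such sheaves in the closure of the defining family. Second, a separate argument excluding purely one-dimensional quotients.
\item[(c), (d)] For $c=2,3$, your claim that the non-reflexive locus is $\operatorname{T}(-1,c,c^2,1)$ is false. Kernels of $F\to\Oo_L$ with $F\in\R(-1,1,1)$, and of $F\to\Oo_L(-1)$ with $F\in\R(-1,2,4)$, have homological dimension $1$. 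Every point of $\operatorname{T}(-1,c,c^2,1)$ has homological dimension $2$. So ``these two families exhaust the space'' is unproved, and connectedness is left open, as you acknowledge.
\item[Smoothness] Generic smoothness cannot come from $\Ext^2(E,E)=0$. By stability, $\operatorname{ext}^1(E,E)-\operatorname{ext}^2(E,E)=1-\chi(E,E)=8c-5<c^2+3c+5$. Hence $\Ext^2(E,E)\neq0$ at every point of $\operatorname{T}(-1,c,c^2,1)$. What you need instead is $\operatorname{ext}^1(E,E)=\dim\operatorname{T}(-1,c,c^2,1)$ at a generic point, as in \cite[Theorem 13]{almeida2022irreducible}.
\end{enumerate}

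The paper never classifies $E^{\vee\vee}/E$. The spectrum bound gives $h^0(E(1))\geq1$ for every $E$. So for $0<\delta<\tfrac12$, every pair $(E(1),s)$ is saturated and very stable, and $\Psi\colon\S^\delta(v_c(1))\to\M(v_c)$ is surjective. The map $\Gamma$ lands in $\operatorname{Hilb}^{c,c(c-3)/2}$, with fibres $\P\Ext^1(\mathcal I_Y,\Oo_{\Pt}(-1))$ of constant dimension on each relevant stratum. For $c\geq4$ this Hilbert scheme is irreducible, which gives irreducibility of $\M(v_c)$ directly, including all the sheaves above. For $c=2,3$, the paper combines three facts: $\operatorname{Hilb}^{2,-1}$ and $\operatorname{Hilb}^{3,0}$ are connected with two components; $\Gamma$ and $\Psi$ are proper; and $\Psi$ is injective where $h^0(E(1))=1$. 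Together these give exactly two components and connectedness, with no degeneration to construct.
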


Our approach relies on the modular Serre correspondence of \cite{jardim2025modular} in order to relate these moduli spaces to Hilbert schemes of curves in $\mathbb{P}^3$ whose geometry is better understood. In particular, this relationship explains why the gap found by Chang occurs in the first place.

This paper is organized as follows: Section \ref{sec:preliminaries} collects the preliminary results we will need; in particular, we briefly review Gieseker stability of torsion-free sheaves on $\P^3$ and the corresponding notion of stable coherent pairs, recording the form of the modular Serre correspondence of \cite{jardim2025modular}. Subsection \ref{sec:T-components} recalls the known families of irreducible components, in particular the T-components, described in \cite{almeida2022irreducible}, while Subsection \ref{nogap} proves part (a) of the Main Theorem. In Section \ref{sec:quasi-max} we translate the problem to the moduli space of pairs via the Serre Correspondence, and compute the relevant invariants of the corresponding  curves in $\P^3$, to establish irreducibility for the quasi-maximal case $c_3 = c_2^{\,2}-2$, proving part~(b) of the Main Theorem for $c_2 \geq 4$. Finally, in Sections \ref{sec:recover-2-2} and \ref{sec:M(-1,3,7)} we analyze the low degree boundary cases: Section \ref{sec:recover-2-2} covers item (c), which was originally established in \cite[Main Theorem 3(ii)]{almeida2022irreducible} and reproved here as a validation of our methods, while Section \ref{sec:M(-1,3,7)} shows that the moduli space $\M(-1,3,7)$ has exactly 2 irreducible components, completing the proof of part (d) of the Main Theorem.

Finally, we classify the non-reflexive sheaves in $\M(-1,2,2)$ and $\M(-1,3,7)$ in the last subsections of Sections~\ref{sec:recover-2-2} and~\ref{sec:M(-1,3,7)}, respectively, giving set-theoretic descriptions of the boundaries of their reflexive loci; see Propositions~\ref{prop:boundary-v2} and~\ref{prop:boundary-v3} for details.

\vspace{.4cm}
\noindent \textbf{Notation}:
We work over the field of complex numbers. We use calligraphic letters for sheaves, such as $\E$, $\F$, and the ideal sheaf $\mathcal{I}_Y$ of a subscheme $Y$. We write $[\E]$, $[\E,s]$, and $[Y]$ for the class represented by a sheaf, a coherent pair, and a subscheme in the corresponding moduli spaces or Hilbert schemes, respectively. When no confusion can arise, we suppress the brackets.  For coherent sheaves $\E$ and $\F$ and $i\geq0$, we set
\[
h^i(\E) \coloneqq \dim_{\mathbb{C}} H^i(\E),\qquad
\operatorname{ext}^i(\E,\F) \coloneqq \dim_{\mathbb{C}}\Ext^i(\E,\F).
\]

\vspace{.4cm}
\noindent \textbf{Acknowledgments}:
M.J. is partially supported by the CNPQ grant number 305601/2022-9 and the FAPESP-CEPID Project 2024/00923-6. C.A. is partially supported by CNPq Universal Project grant number 408974/2023-0. L. O. is supported by the Coordenação de Aperfeiçoamento de Pessoal de Nível Superior - Brasil (CAPES) - Finance Code 001. M.J. and L.O. would like to acknowledge support from the ICTP through the Associates Programme and from the Simons Foundation, whose grant (Record ID: SFI-MPS-T-Institutes-00012057, AD) also supported this work.

\vspace{.4cm}
\noindent \textbf{AI disclosure}:
AI were used for proofreading and improving the exposition. All mathematical content, arguments, and results are the authors’ own; the authors have independently verified the final manuscript and assume full responsibility for its content.

\section{Preliminaries}\label{sec:preliminaries}

\subsection{Stability of sheaves}\label{sec:stab-sheaves}

In this subsection, we recall the definitions of Gieseker semi-stability and $\mu$-semi-stability for sheaves. 

\begin{definition}
Fix an ample line bundle $\Oo _X (1)$ on $X$. The function $$P_{\E}(t) \coloneqq \chi (\E \otimes \Oo _X(t)) = \chi (\E (t))$$ is called the Hilbert polynomial of $\E$. Moreover, $$p_{\E} (t) \coloneqq \dfrac{P_{\E} (t)}{\rk (\E)}$$ is called the reduced Hilbert polynomial of $\E$.
\end{definition}

\begin{definition}
A torsion-free sheaf $\E$ is Gieseker semistable if for any nonzero proper subsheaf $\F \subset \E$ one has $p_{\F} \leq p_{\E}$. If the inequality is strict, $\E$ is called stable. 
\end{definition}

Let $X$ be a projective variety with a fixed ample line bundle $\Oo _X (1)$. For any torsion-free sheaf $\E$ on $X$, denote by $\alpha_i(\E)$ the$ i$-th coefficient of the Hilbert polynomial of $\E$. The number $\alpha_{\dim X -1} (\E) - \rk (\E) \cdot \alpha _{\dim X -1} (\Oo _X)$ is called the degree of the sheaf $\E$ and we will denote it by $\deg (\E)$.

\begin{definition}
Let $\E$ be a torsion-free sheaf on a projective variety $X$, with a fixed line bundle $\Oo _X(1)$. We define the slope of $\E$ as the number: $$\mu (\E) \coloneqq \dfrac{\deg (\E)}{\rk (\E)}.$$ We say that $\E$ is $\mu$-semistable if, for all subsheaves $\F \subset \E $ with $0 < \rk (\F) < \rk (\E)$, we have $\mu (\F) \leq \mu (\E)$. If the last inequality is strict, we say that $\E$ is stable. 
\end{definition}

We have the following chain of implications of the notions of stability. 

\begin{proposition}
Let $\E$ be a torsion-free sheaf on a projective variety $X$ with a fixed ample line bundle $\Oo _X (1)$. Then we have the following chain of implications.

\begin{center}
$\E$ is $\mu$-stable $\Longrightarrow$ $\E$ is Gieseker stable $\Longrightarrow$ $\E$ is Gieseker semistable $\Longrightarrow$$\E$ is $\mu$-semistable.
\end{center}
\end{proposition}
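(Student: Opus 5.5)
We prove the three implications separately, writing $P_{\F}(t)=\rk(\F)\,\alpha_n t^n/n! + \alpha_{n-1}(\F)\,t^{n-1}/(n-1)!+\dots$ with $n=\dim X$. Reduced Hilbert polynomials are compared lexicographically, i.e.\ for $t\gg0$. The key preliminary observation is about the subleading coefficient. Let $d=\deg\Oo_X(1)$ be the common leading coefficient, up to normalization, of $p_{\F}$ for any torsion-free $\F$ on the integral variety $X$. Then $\alpha_{n-1}(\F)=\deg(\F)+\rk(\F)\alpha_{n-1}(\Oo_X)$, so
\[
p_{\F}(t)=\frac{d\,t^n}{n!}+\bigl(\mu(\F)+\alpha_{n-1}(\Oo_X)\bigr)\frac{t^{n-1}}{(n-1)!}+\text{lower order terms}.
\]
Hence comparing $p_{\F}$ and $p_{\E}$ in degree $n-1$ is exactly comparing $\mu(\F)$ and $\mu(\E)$. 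To justify this I use additivity of Hilbert polynomials and the fact that the leading coefficient of $P_{\F}$ is $\rk(\F)$ times that of $P_{\Oo_X}$ for torsion-free $\F$ (rank computed at the generic point).

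\emph{$\mu$-stable $\Rightarrow$ Gieseker stable.} Let $\F\subset\E$ be nonzero and proper. Since $\E$ is torsion-free, so is $\F$, and $\rk\F>0$. If $\rk\F<\rk\E$, then $\mu(\F)<\mu(\E)$, so $p_{\F}<p_{\E}$ already in degree $n-1$. If $\rk\F=\rk\E$, then $\E/\F$ is a nonzero torsion sheaf, so $P_{\E/\F}$ is nonzero with positive leading coefficient, of degree $<n$. Thus $P_{\F}=P_{\E}-P_{\E/\F}<P_{\E}$, and dividing by the common rank gives $p_{\F}<p_{\E}$.

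\emph{Gieseker stable $\Rightarrow$ Gieseker semistable.} This is immediate from the definitions.

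\emph{Gieseker semistable $\Rightarrow$ $\mu$-semistable.} Take $\F\subset\E$ with $0<\rk\F<\rk\E$. Semistability gives $p_{\F}\le p_{\E}$. The leading terms coincide, so the lexicographic inequality forces the $t^{n-1}$ coefficients to satisfy $\mu(\F)+\alpha_{n-1}(\Oo_X)\le\mu(\E)+\alpha_{n-1}(\Oo_X)$, that is, $\mu(\F)\le\mu(\E)$.

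The only delicate step is the bookkeeping in the first paragraph: checking that the leading coefficient of $p_{\F}$ is independent of $\F$, and that the degree defined via $\alpha_{n-1}$ enters linearly. The rest is formal. I would state the argument for integral $X$, which covers the case $X=\P^3$ used throughout the paper.
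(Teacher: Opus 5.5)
Your proof is correct and follows the same route as the paper, which simply cites \cite[Lemma 1.2.13]{huybrechts2010geometry}. That lemma amounts to the comparison you carry out: the $t^{n}$ and $t^{n-1}$ coefficients of the reduced Hilbert polynomials, with the equal-rank case handled by noting that $\E/\F$ is a nonzero torsion sheaf whose Hilbert polynomial has positive leading coefficient.
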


\begin{proof}
See \cite[Lemma 1.2.13]{huybrechts2010geometry}.
\end{proof}

\begin{proposition}
Let $\E$ be a rank $2$ torsion-free sheaf on $\P ^3$ with $\deg (\E)=-1$. If $\E$ is $\mu$-semistable, then $\E$ is $\mu$-stable.
\end{proposition}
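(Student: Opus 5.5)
The plan is to show directly that equality $\mu(\F)=\mu(\E)$ can never occur for a subsheaf $\F\subset\E$ with $0<\rk(\F)<2$. Integrality of degrees then forces the strict inequality. On $\P^3$ with $\Oo(1)$ the degree of a torsion-free sheaf is the integer $c_1$ (up to the positive normalising factor coming from the leading coefficient of the Hilbert polynomial). Concretely, from Riemann–Roch, $\alpha_2(\E)-\rk(\E)\alpha_2(\Oo_{\P^3})$ equals $c_1(\E)$ times $1/2$, so $\deg$ is a fixed positive rational multiple of the integer $c_1$. Since scaling does not affect the comparison of slopes, I will work with $c_1$ itself, so that $\deg(\E)=-1$ means $c_1(\E)=-1$ and $\mu(\E)=-1/2$.

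Now let $\F\subset\E$ be a subsheaf with $0<\rk(\F)<2$, so $\rk(\F)=1$. Then $\mu(\F)=c_1(\F)$ is an integer, while $\mu(\E)=-1/2$ is not. Hence $\mu(\F)\neq\mu(\E)$. Since $\E$ is $\mu$-semistable, $\mu(\F)\leq\mu(\E)$, and therefore $\mu(\F)<\mu(\E)$. This is exactly $\mu$-stability.

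The only point needing care is the normalisation of $\deg$ in the paper's definition. I would check via Hirzebruch–Riemann–Roch that for any torsion-free $\F$ on $\P^3$, the $t^2$-coefficient of $\chi(\F(t))$ is $\tfrac{1}{2}(\rk(\F)\cdot 2+c_1(\F))\cdot\tfrac{1}{2}$, up to a common constant. That is, it equals $\rk(\F)\alpha_2(\Oo)+\tfrac12 c_1(\F)$. Hence $\deg(\F)=\tfrac12 c_1(\F)\in\tfrac12\mathbb{Z}$, with $c_1(\F)$ an integer, for every sheaf, and the same factor applies to $\E$.

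With this normalisation, $\deg(\E)=-1$ as stated corresponds to an odd $c_1$ in rank $2$. The argument needs only that $\rk(\E)\cdot\mu(\F)$ and $\rk(\F)\cdot\deg(\E)$ lie in different residue classes. In every normalisation, the slope of a rank one subsheaf lies in the lattice $\tfrac12\mathbb{Z}\cdot 1$, while $\mu(\E)$ is half of an odd lattice element. Verifying this parity mismatch is the only mild obstacle; the rest is immediate.
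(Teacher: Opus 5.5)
Your integrality idea is the right one, and it is exactly what lies behind \cite[Lemma 1.2.14]{huybrechts2010geometry}, which the paper cites: rank and degree coprime forces $\mu$-stability. A rank-one subsheaf has integral $c_1$, so its slope cannot equal $c_1(\E)/2$ when $c_1(\E)$ is odd. The gap is in the normalisation, which you yourself flag as the only delicate point and then handle incorrectly.

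You compute that, with $\alpha_i$ the literal coefficient of $t^i$, one has $\deg(\F)=\tfrac12 c_1(\F)$. You then invoke ``scaling does not affect the comparison of slopes'' to conclude that $\deg(\E)=-1$ means $c_1(\E)=-1$. Slope comparisons are scale invariant, but the numerical hypothesis $\deg(\E)=-1$ is not. Under your normalisation it says $c_1(\E)=-2$, and then the statement is false: $\Oo_{\P^3}(-1)^{\oplus 2}$ is $\mu$-semistable with $\deg=-1$ and contains $\Oo_{\P^3}(-1)$ of the same slope $-\tfrac12$. For the same reason your closing claim of a parity mismatch ``in every normalisation'' fails. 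With $\deg=\tfrac12 c_1$, slopes of rank-one subsheaves range over $\tfrac12\mathbb{Z}$, which contains $\mu(\E)=-\tfrac12$.

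The repair is to fix the convention before using the hypothesis. Read the paper's $\alpha_i$ as in Huybrechts--Lehn, $P_\F(t)=\sum_i\alpha_i(\F)\,t^i/i!$. This is the convention of the cited lemma, and the one the paper needs, since it applies the statement to sheaves with $c_1=-1$. Your Riemann--Roch computation then gives $\alpha_2(\F)=2\rk(\F)+c_1(\F)$ and $\alpha_2(\Oo_{\P^3})=2$, so $\deg(\F)=c_1(\F)$ exactly. Hence $\deg(\E)=-1$ really does mean $c_1(\E)=-1$ and $\mu(\E)=-\tfrac12\notin\mathbb{Z}$, and your second paragraph then completes the proof as written.
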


\begin{proof}
See \cite[Lemma 1.2.14]{huybrechts2010geometry}.
\end{proof}

\subsection{Stability of pairs}\label{sec:stab-pairs}

In \cite{jardim2025modular}, the notion of stability of pairs is used to provide a systematic construction of the Serre correspondence at the level of morphisms of schemes, connecting the moduli space of sheaves with the Hilbert scheme of subschemes of codimension $2$ in $\mathbb{P}^3$. We will use this correspondence to study the moduli spaces  $\M(-1,c,c^2-2)$. For this, we start by recalling some definitions. 

\begin{definition}
A coherent pair $(\E, s)$ on $X$ consists of a coherent sheaf $\E$ and a (possibly zero) global section $s \in H^0 (\E)$. We say that $(\E,s)$ is pure if $\E$ is a pure sheaf and $s$ is a non-trivial section.
\end{definition}

A morphism $(\phi, \lambda) \colon (\F, s') \rightarrow (\E,s)$ between coherent pairs is a pair $(\phi, \lambda)$, where $\phi \in \Hom (\F,\E)$ and $\lambda \in \mathbb{C}$, such that $\phi \circ s' = \lambda \cdot s$, i.e., the following diagram commutes

\[\begin{tikzcd}
	{\mathcal{O} _X} & {\mathcal{O} _X} \\
	{\mathcal{F}} & {\mathcal{E}.}
	\arrow["\lambda", from=1-1, to=1-2]
	\arrow["{s'}"', from=1-1, to=2-1]
	\arrow["s", from=1-2, to=2-2]
	\arrow["\phi"', from=2-1, to=2-2]
\end{tikzcd}\]

\begin{definition}
A pure coherent pair $(\mathcal{E},s)$ is called saturated if the cokernel of the induced monomorphism $s \colon \mathcal{O}_X \hookrightarrow \mathcal{E}$ is torsion-free. 
\end{definition}

The construction of the moduli space of coherent pairs relies on a notion of stability for pairs that is related to Gieseker stability.

\begin{definition}
Let $\delta \in \mathbb{Q}[t]_{>0}$ be a rational polynomial with a positive leading coefficient. For a given coherent pair $(\mathcal{E},s)$, define its reduced Hilbert polynomial as

$$p^{\delta} _{(\mathcal{E},s)} (t) \coloneqq \dfrac{P_{\mathcal{E}}(t)+\varepsilon \cdot \delta}{\rk (\mathcal{E})},$$
where
$$\varepsilon = \left \{ \begin{matrix} 1, & \mbox{if } s \neq 0, \\ 0, & \mbox{if }s=0 .\end{matrix} \right. $$
A pair $(\mathcal{E},s)$ is $\delta$-(semi)-stable if it is pure and every proper, nontrivial sub-pair $(\mathcal{F},s') \hookrightarrow(\mathcal{E},s)$ satisfies $$p^{\delta}_{(\mathcal{F},s')} (t) < (\leq) p^{\delta}_{(\mathcal{E},s)}(t),$$ where polynomials are ordered lexicographically.

\end{definition}
With the definition of stability of pairs, we can consider the moduli space $\mathcal{S}^{\delta}(v)$ of $\delta$-semistable pairs with class $v$, constructed in \cite{wandel2015moduli}.

If the stability parameter $\delta$ is sufficiently small, then $\delta$-stability implies Gieseker stability, regardless of the section, as the next result shows. 

\begin{proposition}\label{verystable}
Let $\mathcal{E}$ be a torsion-free sheaf of rank $r$ with $h^0(\mathcal{E}) > 0$, and fix $\delta < 1/(r-1)$. If a pair $(\mathcal{E},s)$ is $\delta$-stable, then $\mathcal{E}$ is semistable. 
\end{proposition}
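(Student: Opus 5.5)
We argue by contradiction. Suppose $(\E,s)$ is $\delta$-stable but $\E$ is not Gieseker semistable. Then there is a saturated destabilizing subsheaf $\F\subset\E$ of rank $k$ with $0<k<r$ and $p_{\F}>p_{\E}$ in the lexicographic order. Saturation can be assumed because replacing $\F$ by its saturation only increases the Hilbert polynomial without changing the rank. The plan is to consider the sub-pair $(\F,s')$ and compare $p^{\delta}_{(\F,s')}$ with $p^{\delta}_{(\E,s)}$. Here $s'=s$ if $s$ factors through $\F$, and $s'=0$ otherwise. We then split into two cases according to whether $s\in H^0(\F)$.

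\emph{Case 1: $s\in H^0(\F)$.} Then $\varepsilon_{\F}=\varepsilon_{\E}=1$, and $\delta$-stability gives $P_{\F}/k+\delta/k<P_{\E}/r+\delta/r$. This rearranges to
\[
p_{\F}-p_{\E}<\delta\Big(\frac1r-\frac1k\Big)\le 0,
\]
which contradicts $p_{\F}>p_{\E}$. No hypothesis on the size of $\delta$ is needed here.

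\emph{Case 2: $s\notin H^0(\F)$.} Now the sub-pair is $(\F,0)$. Instead of using it, we pass to the quotient $\mathcal{G}=\E/\F$, which is torsion-free of rank $r-k\ge1$ since $\F$ is saturated. The image of $s$ in $\mathcal{G}$ is nonzero. The standard trick is to use the quotient form of stability for pairs: for the exact sequence of pairs $0\to(\F,0)\to(\E,s)\to(\mathcal{G},\bar s)\to0$, additivity of $P+\varepsilon\delta$ shows that $\delta$-stability forces
\[
p^{\delta}_{(\mathcal{G},\bar s)}>p^{\delta}_{(\E,s)}.
\]
This step is where the argument can go wrong. If one uses only the sub-pair $(\F,0)$, the inequality reads $p_{\F}<p_{\E}+\delta/r$, so $p_{\F}$ may exceed $p_{\E}$ by as much as $\delta/r$, which is not yet a contradiction. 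The bound on $\delta$ is exactly what must close this gap.

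To close it, note that $\E$ has rank $r$ with $p_{\F}>p_{\E}$, so the leading coefficients of $P_{\F}/k$ and $P_{\E}/r$ agree. The difference $p_{\F}-p_{\E}$ is then a polynomial whose first nonzero coefficient is positive, and that coefficient lies in $\frac{1}{kr}\mathbb{Z}$ scaled by the factorial normalisation of the Hilbert polynomial. If $\delta$ is a constant, any difference smaller than $\delta/r$ must be concentrated in the constant term. The constant term of $p_{\F}-p_{\E}$ lies in $\frac{1}{kr}\mathbb{Z}$, so a positive value is at least $\frac{1}{kr}$. The inequality $\frac{1}{kr}\le p_{\F}-p_{\E}<\frac{\delta}{r}$ then needs $\delta>1/k\ge 1/(r-1)$, which contradicts $\delta<1/(r-1)$. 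If $\delta$ has positive degree, the leading-order comparison instead forces $p_{\F}\le p_{\E}$ in all coefficients above $\deg\delta$, and the same integrality argument applies at the degree of $\delta$.

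The main obstacle is this integrality estimate. It requires showing that a positive difference of reduced Hilbert polynomials at the relevant coefficient is at least $1/(k r)$. This holds because $kP_{\E}-rP_{\F}$ is an integer-valued polynomial in its numerator. We will check that this lower bound is compatible with the normalisation of $\delta$ used in the statement, for which the paper presumably takes $\delta$ constant. Under that normalisation the case distinction above yields the contradiction, so $\E$ is semistable.
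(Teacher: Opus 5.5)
Your proposal is correct and is the standard argument behind the cited \cite[Proposition 2.4]{jardim2025modular}: split according to whether $s$ factors through the destabilizing subsheaf $\F$, and when it does not, use $r\chi(\F)-k\chi(\E)\in\mathbb{Z}$ to upgrade $p_{\F}<p_{\E}+\delta/r$ to $p_{\F}\le p_{\E}$. You can drop the detour through the quotient pair and the remark on positive-degree $\delta$, since $\delta<1/(r-1)$ in the lexicographic order already forces $\delta$ to be a positive constant, and only the integer constant terms $\chi(\F)$ and $\chi(\E)$ enter the argument.
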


\begin{proof}
See \cite[Proposition 2.4]{jardim2025modular}.
\end{proof}

For a sufficiently small perturbation of the Hilbert polynomial, the converse holds provided the section is non-zero.

\begin{proposition}\label{prop:stable-sheaf-stable-pair}
Let $\mathcal{E}$ be a torsion-free sheaf of rank $r$ with $h^0(\mathcal{E}) > 0$, and fix $\delta < 1/ r$. If $\mathcal{E}$ is stable, then for every non-trivial section $s \in H^0(\mathcal{E})$, the pair $(\mathcal{E},s)$ is $\delta$-stable. 
\end{proposition}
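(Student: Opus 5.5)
Since $\E$ is stable, it is torsion-free and hence pure, so the pair $(\E,s)$ is pure. It remains to check the inequality $p^{\delta}_{(\F,s')}(t)<p^{\delta}_{(\E,s)}(t)$ for every proper nontrivial sub-pair $(\F,s')\hookrightarrow(\E,s)$. Here $\F\subset\E$ is a nonzero proper subsheaf, of rank $r'$ with $1\le r'\le r$. Since $s\neq0$, we have
\[
p^{\delta}_{(\E,s)}=p_{\E}+\frac{\delta}{r}.
\]
For the sub-pair, $\varepsilon'\in\{0,1\}$ according to whether $s'$ is zero or not, and
\[
p^{\delta}_{(\F,s')}=p_{\F}+\frac{\varepsilon'\delta}{r'}.
\]
In the statement $\delta$ is a polynomial with positive leading coefficient. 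The bound $\delta<1/r$ is read as a bound on that polynomial relative to the gaps between reduced Hilbert polynomials, which is the convention of \cite{jardim2025modular}. The plan is to split into cases according to $r'$.

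\textbf{Case $r'<r$.} Stability of $\E$ gives $p_{\F}<p_{\E}$ strictly. The worst case is $\varepsilon'=1$, $r'=1$. We then need
\[
p_{\E}-p_{\F}>\delta\Bigl(\frac1{r'}-\frac1r\Bigr),
\]
and the right-hand side is at most $\delta(1-\tfrac1r)<1$. This inequality must come from the integrality of the Hilbert polynomial coefficients. The polynomials $r\,r'(p_{\E}-p_{\F})=r'P_{\E}-rP_{\F}$ are positive and lie in a lattice determined by integrality of Chern characters. Their first nonzero coefficient is therefore bounded below. The step I expect to be the main obstacle is matching this lattice gap exactly with the hypothesis $\delta<1/r$, i.e.\ checking that the normalization of $\delta$ makes the perturbation $\delta(1/r'-1/r)$ strictly smaller than the smallest possible positive value of $p_{\E}-p_{\F}$ in the lexicographic order. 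I would first try to reduce to the leading coefficient in which $p_{\E}$ and $p_{\F}$ differ. If $\delta$ has degree lower than that coefficient, the strict inequality is automatic. If not, the bound $\delta<1/r$ should force $\delta(1-1/r)<1/r'$-type gaps, which is the expected fine inequality.

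\textbf{Case $r'=r$.} Here $\E/\F$ is a nonzero torsion sheaf. Hence $P_{\F}\le P_{\E}-P_{\E/\F}<P_{\E}$ in the lexicographic order, because $P_{\E/\F}$ has positive leading coefficient. So $p_{\F}<p_{\E}$. Moreover $\varepsilon'\le1$, and both perturbations equal $\delta/r$ when $\varepsilon'=1$. The inequality therefore holds strictly without any use of $\delta$. This covers in particular sub-pairs with $s'=\lambda^{-1}s$-compatible sections.

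Combining the two cases gives $\delta$-stability of $(\E,s)$ for every nontrivial $s$. Alternatively, one may simply invoke \cite[Proposition 2.5]{jardim2025modular}, the companion of Proposition~\ref{verystable}.
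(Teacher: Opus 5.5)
The paper gives no argument of its own here. It simply cites \cite[Proposition 2.5]{jardim2025modular}, which is the fallback in your last sentence. Your self-contained argument, however, stops at the only nontrivial step. In the case $r'<r$, $\varepsilon'=1$ you say that $\delta<1/r$ ``should force'' the needed inequality, but you never prove it. The comparison you do make does not close it either. You bound the unrescaled perturbation $\delta(1/r'-1/r)$ by $\delta(1-1/r)<1$, but the integrality you invoke lives in the rescaled polynomial $r'P_{\E}-rP_{\F}$. After rescaling by $rr'$ the perturbation becomes $\delta(r-r')$, and that is the quantity that must be compared with the integer gap $1$. Your reading of $\delta<1/r$ as a bound ``relative to the gaps between reduced Hilbert polynomials'' is also vaguer than necessary, and it produces a spurious case split on the degree of $\delta$.

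The missing observation is this. In the lexicographic order, a polynomial of positive degree with positive leading coefficient exceeds every constant. So $\delta<1/r$ forces $\delta$ to be a constant with $0<\delta<1/r$. Now split the case $r'<r$, $\varepsilon'=1$ in two.
\begin{itemize}
\item[(i)] If $p_{\E}$ and $p_{\F}$ differ in some coefficient of positive degree, the first differing coefficient of $p_{\E}-p_{\F}$ is positive by stability. Then $p_{\E}-p_{\F}>\delta(1/r'-1/r)$ holds automatically.
\item[(ii)] Otherwise $p_{\E}-p_{\F}$ is the constant $\bigl(r'\chi(\E)-r\chi(\F)\bigr)/(rr')$. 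Its numerator is a positive integer, so
\[
p_{\E}-p_{\F}\geq\frac{1}{rr'}>\frac{\delta(r-r')}{rr'}=\delta\Bigl(\frac{1}{r'}-\frac{1}{r}\Bigr),
\]
because $\delta(r-r')<(r-r')/r<1$.
\end{itemize}
With this inserted, the rest of your proposal is correct and gives a complete proof: purity, the case $\varepsilon'=0$, and the equal-rank case via the nonzero torsion quotient $\E/\F$. Note that the argument only uses $\delta(r-1)<1$.
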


\begin{proof}
See \cite[Proposition 2.5]{jardim2025modular}.
\end{proof}

We are interested in cases where both the sheaf and the pair are stable.

\begin{definition}
For a fixed $\delta \in \mathbb{Q}[t]_{>0}$, a coherent pair $(\mathcal{E},s)$ is called very stable if $(\mathcal{E},s)$ is $\delta$-stable and $\mathcal{E}$ is Gieseker stable. 
\end{definition}

From now on, we fix $X=\P^3$. If $\E$ is a rank $2$ sheaf, we define 
$$ v(\E) = (c_1(\E),c_2(\E),c_3(\E)) \in \mathbb{Z}^3 $$
where $c_i$ is the $i$-th Chern class. Conversely, the i-th component of a vector $v\in\mathbb{Z}^3$ is denoted by $c_i(v)$. Moreover, if $v=v(\E)$, then we also define the twisted class $v(k)\coloneqq v(\E(k))$ where $k \in \mathbb{Z}$.

The next two theorems describe the morphisms between the moduli space of pairs and the Gieseker moduli space and a morphism from the space of pairs to a Hilbert scheme; let $\operatorname{Hilb}^{d,g}$ denote the Hilbert scheme of subschemes $C\subset\P^3$ with Hilbert polynomial $dt+(1-g)$.

\begin{theorem}\label{ModularSerreHilbert}
Fix a numerical class $v$.
For every $\delta$ such that every $\delta$-semistable pair $(\E,s)$ is saturated, there exists a morphism 
$$\begin{matrix}
    \Gamma \colon & \mathcal{S}^{\delta}(v) & \rightarrow & \operatorname{Hilb}^{d,g} \\
& [\E,s] & \mapsto & [C]
\end{matrix}$$
from the moduli space $\mathcal{S}^{\delta}(v)$ to the Hilbert scheme $\operatorname{Hilb}^{d,g}$ where 
$$ d=c_2(v) ~~{\rm and} ~~ g=\dfrac{1}{2}(c_2(v)c_1(v)+c_3(v)) - 2c_2(v) +1.$$
Here $C\subset\P^3$ is the subscheme determined by $\coker(s)\simeq\mathcal{I}_C(c_1(v))$. The fibers $\Gamma^{-1}([C])$ are isomorphic to $\P\Ext^1(\mathcal{I}_C,\Oo_{\P^3}(-c_1(v)))$.
\end{theorem}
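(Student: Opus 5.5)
The plan is to construct $\Gamma$ on the level of moduli functors and then compute the invariants and the fibers. Let $(\E,s)$ be a $\delta$-semistable pair of class $v$ on $\P^3$. By hypothesis it is saturated, so $s\colon \opThree\hookrightarrow\E$ has torsion-free cokernel $\mathcal{Q}$ of rank $1$. A rank one torsion-free sheaf on $\P^3$ is $\mathcal{I}_Z\otimes\det$ for a subscheme $Z$ of codimension $\ge 2$. Here $\det\mathcal{Q}=\det\E=\Oo_{\P^3}(c_1(v))$, so $\mathcal{Q}\simeq\mathcal{I}_C(c_1(v))$ for a unique closed subscheme $C$.

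Next we show $C$ is pure one-dimensional, or empty when $c_2=0$. Dualizing $0\to\opThree\to\E\to\mathcal{I}_C(c_1)\to0$ gives $\lext^i(\mathcal{I}_C(c_1),\opThree)\simeq\lext^{i}(\E,\opThree)$ for $i\ge2$. Purity of $\E$ means $\dim\lext^i(\E,\opThree)\le 3-i-1$. This forces $\lext^2(\mathcal{I}_C,\opThree)=\lext^3(\mathcal{O}_C,\opThree)$ to have support of dimension $\le 0$. Using the long exact sequence $0\to\mathcal{I}_C\to\opThree\to\Oo_C\to0$, we get that $\lext^3(\Oo_C,\opThree)$ detects the $0$-dimensional associated points of $C$, so these must be absent. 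A more careful argument is needed here: one has to show that an embedded or isolated point $p$ of $C$ produces a subsheaf of $\E$ supported in dimension $\le 1$, contradicting torsion-freeness. I expect this step to be the main subtle point.

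The invariants come from Chern class computations. From $c(\E)=c(\mathcal{I}_C(c_1))$ we read off $c_2(\E)=\deg C=d$. The genus comes from $\chi(\E(t))=\chi(\opThree(t))+\chi(\opThree(t+c_1))-\chi(\Oo_C(t+c_1))$ together with Riemann–Roch for rank $2$ sheaves on $\P^3$. Comparing constant terms gives $1-g=\chi(\Oo_C)$ with $g=\tfrac12(c_1c_2+c_3)-2c_2+1$; this is a routine check.

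For the morphism itself, take a flat family of pairs $(\E_T,s_T)$ over a base $T$. Saturation holds fiberwise, so $s_T$ is injective with $T$-flat cokernel, by the fiberwise injectivity criterion. The cokernel is then a $T$-flat family $\mathcal{I}_{\mathcal{C}}\otimes\Oo(c_1)\otimes L$, where $L$ is a line bundle from $T$; twisting it away yields a flat family of subschemes $\mathcal{C}\subset\P^3\times T$ with constant Hilbert polynomial. This defines a natural transformation from the pair moduli functor to the Hilbert functor. By the universal property of the coarse space $\mathcal{S}^\delta(v)$ (working on the Quot-type parameter space and descending by invariance under the group action), it descends to $\Gamma$. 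Finally, the fiber over $[C]$ consists of extensions $0\to\opThree\to\E\to\mathcal{I}_C(c_1)\to0$ with $\E$ pure. Two such pairs are isomorphic exactly when the extension classes differ by a scalar, which rescales $s$. Every nonzero class gives a $\delta$-semistable saturated pair, by the same stability argument, since the hypothesis holds for all pairs in the class. So $\Gamma^{-1}([C])\simeq\P\Ext^1(\mathcal{I}_C(c_1),\opThree)=\P\Ext^1(\mathcal{I}_C,\Oo_{\P^3}(-c_1(v)))$, and the identification works in families via the relative Ext sheaf.
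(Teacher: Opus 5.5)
Two steps of your plan fail as written. The first is the purity claim in your second paragraph. It is false, so the ``more careful argument'' you anticipate cannot exist. For any $C$ of codimension $\geq 2$, every extension of $\mathcal{I}_C(c_1)$ by $\Oo_{\P^3}$ has torsion-free middle term, because an extension of torsion-free sheaves is torsion-free. So an isolated or embedded point of $C$ never produces torsion in $\E$. Your own isomorphism $\lext^2(\E,\Oo_{\P^3})\simeq\lext^3(\Oo_C,\Oo_{\P^3})$ shows that $C$ has zero-dimensional associated points exactly when $\lext^2(\E,\Oo_{\P^3})\neq 0$, i.e.\ when $\E$ has homological dimension $2$. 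Torsion-freeness only forces this sheaf to have zero-dimensional support, which is automatic for an $\lext^3$; you conflated ``supported in dimension $0$'' with ``zero''. The paper depends on such non-pure $C$. The general point of $\operatorname{Hilb}^{c,c(c-3)/2}$ (Theorem~\ref{thm:hilbertirreducible}) and of the component $H'$ of $\operatorname{Hilb}^{3,0}$ (Theorem~\ref{hilb3}) is a plane curve plus an isolated point. The pairs over these points give the T-components, whose general sheaves have $\lext^2(\E,\Oo_{\P^3})\simeq\Oo_p$. The step is also unnecessary: $\operatorname{Hilb}^{d,g}$ parametrizes all subschemes with Hilbert polynomial $dt+1-g$. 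So $\operatorname{codim}C\geq 2$ plus the Riemann--Roch comparison of your third paragraph, which is exactly the paper's computation, suffices. For the existence of $\Gamma$ and its fibers, the paper simply invokes \cite[Main Theorem 1]{jardim2025modular}.

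The second failure is a genuine gap in the fiber description. You claim every nonzero class gives a $\delta$-semistable pair ``by the same stability argument, since the hypothesis holds for all pairs in the class.'' No stability argument appears earlier in your proposal, and the hypothesis is only the implication $\delta$-semistable $\Rightarrow$ saturated. What you need is the converse for extension pairs. Without it you only get $\Gamma^{-1}([C])$ as a subset of $\P\Ext^1(\mathcal{I}_C,\Oo_{\P^3}(-c_1(v)))$, and the surjectivity of $\Gamma$ used in Lemmas~\ref{fiber2} and~\ref{fiber3} would not follow. The paper gets this from the cited theorem, and in the case it actually uses it checks stability of the extensions directly (proof of Proposition~\ref{prop:irreducible}). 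For $c_1(v)=1$ and $0<\delta<\frac12$ you can supply it yourself. In a non-split extension $\E$, a rank one subsheaf of slope $\geq\frac12$ would map isomorphically onto some $\mathcal{I}_Z(1)\subseteq\mathcal{I}_C(1)$, with $\mathcal{G}=\mathcal{I}_C/\mathcal{I}_Z$ supported in dimension $\leq 1$. Since $\Ext^1(\mathcal{G}(1),\Oo_{\P^3})\simeq H^2(\mathcal{G}(-3))^*=0$, pulling back to $\mathcal{I}_Z(1)$ is injective on $\Ext^1$, so the extension would split. Thus $\E$ is $\mu$-stable, and $(\E,s)$ is $\delta$-stable by Proposition~\ref{prop:stable-sheaf-stable-pair}.
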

\begin{proof}
This is a particular case of \cite[Main Theorem 1]{jardim2025modular}. If the pair $(\E,s)$ is saturated, then $\coker\{s \colon \mathcal{O}_{\P^3}\to\E\}$ is a torsion-free sheaf of rank 1 whose determinant is equal to $\det(\E)$. Therefore, we have the short exact sequence
$$ 0 \to \mathcal{O}_{\P^3} \stackrel{s}{\to} \E \to \mathcal{I}_C(c_1(v)) \to 0 ,$$
where $C\subset\P^3$ is a closed subscheme. Comparing second Chern classes in the displayed sequence gives $\deg(C)=c_2(\E)$. Taking the Euler characteristic $\chi$, we obtain:
\begin{equation}\label{eq:chi}
 \chi(\E) = 1 + \chi(\mathcal{I}_C(c_1)) = 1+\chi(\mathcal{O}_{\mathbb{P}^3}(c_1)) - \chi(\mathcal{O}_C(c_1))   
\end{equation}
By the Hirzebruch--Riemann--Roch theorem on $C$,
\[
\chi(\mathcal{O}_C(c_1)) = \deg(\mathcal{O}_C(c_1)) + 1 - g = c_1 d + 1 - g
\]
On the other hand, we get
$$ \chi(\E) - \chi(\mathcal{O}_{\mathbb{P}^3}(c_1)) = 1 - 2c_2 - \frac{1}{2}c_1 c_2 + \frac{1}{2}c_3 $$
Substituting the last two expressions in the equality in display \eqref{eq:chi} and using $d=c_2$, we obtain
$$ 1 - 2c_2 - \frac{1}{2}c_1 c_2 + \frac{1}{2}c_3 = g - c_1 c_2 $$
which we can solve for $g$ and obtain the stated expression.
\end{proof}

We can add a technical lemma, which will be useful in this work.

\begin{lemma}\label{CorGammaProper}
Fix a numerical class $v$. For any $\delta$ such that every $\delta$-semistable pair $(\E,s)$ is saturated, the morphism $$\Gamma \colon \mathcal{S}^{\delta}(v) \rightarrow \operatorname{Hilb}^{d,g}$$ is proper.
\end{lemma}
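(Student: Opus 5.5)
The argument is the standard one: a morphism from a proper scheme to a separated scheme is proper. I would show that $\mathcal{S}^{\delta}(v)$ is projective over $\mathbb{C}$ and that $\operatorname{Hilb}^{d,g}$ is separated (indeed projective), and then conclude formally.

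\emph{Step 1: the source is projective.} The moduli space $\mathcal{S}^{\delta}(v)$ of $\delta$-semistable pairs is constructed in \cite{wandel2015moduli} as a GIT quotient of a suitable parameter scheme (a Quot-scheme type object with a projective bundle of sections) by a reductive group. It is therefore a projective scheme over $\mathbb{C}$, in the same way as the Gieseker--Maruyama moduli spaces. In particular, $\mathcal{S}^{\delta}(v)\to\operatorname{Spec}\mathbb{C}$ is proper. This is the only input that is not purely formal, and I would simply quote it from \cite{wandel2015moduli}. If one prefers not to rely on projectivity of the GIT quotient, one can instead check the valuative criterion directly. Langton-type arguments for pairs give that every $\delta$-semistable family over the punctured spectrum of a DVR extends, possibly after a base change, to a $\delta$-semistable family over the whole DVR. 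That $\delta$-semistable limit is again saturated by the hypothesis on $\delta$, so $\Gamma$ is defined on it. I expect this alternative to be the main obstacle, which is why I would rely on the projectivity statement instead.

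\emph{Step 2: the target is separated.} $\operatorname{Hilb}^{d,g}$ is a projective scheme by Grothendieck's construction, so it is separated over $\mathbb{C}$.

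\emph{Step 3: conclusion.} The morphism $\Gamma$ exists by Theorem~\ref{ModularSerreHilbert}, since every $\delta$-semistable pair is saturated. Consider the factorization of $\mathcal{S}^{\delta}(v)\to\operatorname{Spec}\mathbb{C}$ as $\mathcal{S}^{\delta}(v)\xrightarrow{\Gamma}\operatorname{Hilb}^{d,g}\to\operatorname{Spec}\mathbb{C}$. The composite is proper and the second map is separated, so the standard cancellation property of proper morphisms (\cite[II, Cor.~4.8(e)]{Hartshorne}) gives that $\Gamma$ is proper. This uses that $\Gamma$ is of finite type, which holds because both schemes are of finite type over $\mathbb{C}$.
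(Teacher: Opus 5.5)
Your proposal is correct and matches the paper's proof: $\mathcal{S}^{\delta}(v)$ is projective as a GIT quotient (Wandel), $\operatorname{Hilb}^{d,g}$ is projective and hence separated, and the cancellation property for proper morphisms then gives that $\Gamma$ is proper. The Langton-type alternative you mention is not needed, and the paper does not use it either.
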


\begin{proof}
The moduli space $\mathcal{S}^{\delta}(v)$ is constructed as a GIT quotient (see \cite{wandel2015moduli} for more details), which implies that it is projective (and, in particular, proper) over the base field $\mathbb{C}$. Similarly, the Hilbert scheme $\operatorname{Hilb}^{d,g}$ is well-known to be projective over $\mathbb{C}$, \cite[Theorem 3.1]{Grothendieck1961Techniques}, and is therefore a separated scheme of finite type over $\mathbb{C}$.

Consider the commutative diagram formed by the morphism $\Gamma$ and the respective structure morphisms to $\operatorname{Spec}(\mathbb{C})$:

\[\begin{tikzcd}
	{\mathcal{S}^{\delta}(v)} && {\operatorname{Hilb}^{d,g}} \\
	& {\operatorname{Spec} (\mathbb{C})}
	\arrow["\Gamma", from=1-1, to=1-3]
	\arrow[from=1-1, to=2-2]
	\arrow[from=1-3, to=2-2]
\end{tikzcd}\]

Because the structure morphism $\mathcal{S}^{\delta}(v) \rightarrow \operatorname{Spec}({\mathbb{C}})$ is proper and the target structure morphism $\operatorname{Hilb}^{d,g} \rightarrow \operatorname{Spec}(\mathbb{C})$ is separated, it follows from the cancellation property for proper morphisms (e.g., Stacks Project, Tag 01W6) that $\Gamma$ itself must be a proper morphism.
\end{proof}

\begin{theorem}\label{ModularSerreGieseker}
Fix a numerical class $v$. For every $\delta$ such that every $\delta$-semistable pair $(\E,s)$ is very stable, there exists a morphism
$$\begin{matrix}
\Psi \colon & \mathcal{S}^{\delta}(v) & \rightarrow & \mathcal{M}(v) \\
& [\E,s] & \mapsto & [\E] 
\end{matrix}$$
from the moduli space of pairs $\mathcal{S}^{\delta}(v)$ to the Gieseker moduli space $\mathcal{M}(v)$. In addition, if $[\E] \in \M(v)$, then $\Psi^{-1}([\E])=\P H^0(\E)$.

\end{theorem}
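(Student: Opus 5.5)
Most likely this theorem is quoted directly from \cite[Main Theorem 1]{jardim2025modular}, just as Theorem \ref{ModularSerreHilbert} was. The plan is to explain how it follows from the functorial properties of the two moduli spaces.

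\textbf{Step 1: build $\Psi$ on families.} Let $(\E_T,s_T)$ be a flat family of $\delta$-semistable pairs of class $v$ over a base $T$. By hypothesis every pair in the family is very stable, so each fibre $\E_t$ is Gieseker stable. Forgetting the section therefore gives a family of stable sheaves of class $v$ over $T$. Since $\M(v)$ corepresents the moduli functor of semistable sheaves, we obtain a classifying morphism $T\to\M(v)$.

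\textbf{Step 2: descend to $\mathcal{S}^{\delta}(v)$.} The space $\mathcal{S}^{\delta}(v)$ is a GIT quotient of a Quot-type parameter scheme $R$ carrying a universal pair, by a group $G$ \cite{wandel2015moduli}. Apply Step 1 to the universal family over $R$. The resulting map $R\to\M(v)$ only depends on the isomorphism class of the sheaf, so it is $G$-invariant. By the universal property of the categorical quotient it descends to $\Psi\colon\mathcal{S}^{\delta}(v)\to\M(v)$. On closed points this is $[\E,s]\mapsto[\E]$, because semistable pairs are stable here, so closed points are isomorphism classes of pairs.

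\textbf{Step 3: identify the fibre $\Psi^{-1}([\E])$.} By Proposition \ref{prop:stable-sheaf-stable-pair}, every nonzero $s\in H^0(\E)$ gives a $\delta$-stable pair $(\E,s)$ (this uses $\delta$ small, which holds in the very stable regime). Two pairs $(\E,s)$ and $(\E,s')$ are isomorphic exactly when some $\phi\in\operatorname{Aut}(\E)$ satisfies $\phi\circ s=\lambda s'$. Stability makes $\E$ simple, so $\operatorname{Aut}(\E)=\mathbb{C}^*$, and the isomorphism classes are precisely the points of $\P H^0(\E)$. To get the scheme structure, not just the bijection:
\begin{enumerate}
\item Étale locally on $\M(v)$ near $[\E]$ there is a family $\E_U$ over $U$. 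It is the universal family up to twisting by line bundles, since stable sheaves are simple.
\item The relative space of nonzero sections of $\E_U$ modulo scalars is $\P(p_*\E_U)$, restricted to where $h^0$ is constant. More canonically, it is the projectivization of the cone of sections.
\item This space represents the pair functor over $U$. Hence $\Psi^{-1}(U)\cong\P(\text{sections})$, and its fibre over $[\E]$ is $\P H^0(\E)$.
\end{enumerate}

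\textbf{Main obstacle.} The delicate step is the scheme-theoretic identification of the fibre. $\M(v)$ has no universal family in general, and $h^0$ jumps, so $p_*\E_U$ need not be locally free. I would handle this by working on the Quot scheme and comparing stabilizers. Alternatively, one can define the fibre directly as the scheme representing nonzero sections of the fixed sheaf $\E$ up to $\mathbb{C}^*$, which is $\P H^0(\E)$. If needed, I would simply cite \cite[Main Theorem 1]{jardim2025modular} for this identification.
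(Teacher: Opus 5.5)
The paper gives no argument here. Its proof is only the citation of \cite[Main Theorem 1]{jardim2025modular}, as you guessed, and your Steps 1 and 2 are a reasonable reconstruction of how the forgetful morphism comes from the universal properties of the two quotients.

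The gap is in Step 3. There you apply Proposition~\ref{prop:stable-sheaf-stable-pair} on the grounds that $\delta$ is small ``in the very stable regime'', and that inference is false. The hypothesis that every $\delta$-semistable pair is very stable only constrains pairs that are already $\delta$-semistable; it is what makes $\Psi$ land in $\M(v)$. It says nothing about whether $(\E,s)$ is $\delta$-semistable for a given stable $\E$ and $s\neq0$, which is exactly what $\Psi^{-1}([\E])=\P H^0(\E)$ requires.

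Here is a concrete failure. Let $\delta$ have degree $3$ and leading coefficient $a>0$. For any pure rank-$2$ pair $(\mathcal{G},s)$, let $\F\subset\mathcal{G}$ be the saturation of $s(\Oo_{\P^3})$. Then $(\F,s)$ is a proper sub-pair with $p^{\delta}_{(\F,s)}=P_{\F}+\delta$, whose $t^3$-coefficient is $\tfrac16+a$. By contrast, $p^{\delta}_{(\mathcal{G},s)}=\tfrac12(P_{\mathcal{G}}+\delta)$ has $t^3$-coefficient $\tfrac16+\tfrac a2$. Hence $\mathcal{S}^{\delta}(v)=\varnothing$ for every rank-$2$ class, and the hypothesis of the theorem holds vacuously. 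Yet every point of $\M(v_c(1))$ has the form $[\E(1)]$ with $h^0(\E(1))\geq1$ by Lemma~\ref{section}, so $\Psi^{-1}([\E(1)])=\varnothing\neq\P H^0(\E(1))$. If you prefer $\deg\delta=2$, a leading coefficient $>\tfrac12$ gives the same failure for $v_c(1)$: compare the $t^2$-coefficients, which are at least $1+\tfrac a2$ and equal to $\tfrac54$.

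So Step 3 needs an extra input: $(\E,s)$ must be $\delta$-semistable for every stable $[\E]\in\M(v)$ and every nonzero $s\in H^0(\E)$. Proposition~\ref{prop:stable-sheaf-stable-pair} supplies this when $\delta<1/\rk(\E)$, but it has to be assumed separately, not deduced from very-stability. Without it, your argument only shows two weaker things:
\begin{enumerate}
\item $\Psi^{-1}([\E])$ is the open subset of $\P H^0(\E)$ consisting of those $s$ for which $(\E,s)$ is $\delta$-semistable;
\item $\Psi^{-1}(U)$ is only an open subscheme of the projectivized cone of sections over an \'etale chart $U$.
\end{enumerate}

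The statement as printed carries the same implicit hypothesis. This is harmless in the paper, because $\Psi$ is only used in rank $2$ with $0<\delta<\tfrac12$, and there Proposition~\ref{prop:stable-sheaf-stable-pair} is invoked precisely to get full fibers and surjectivity of $\Psi$.

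Once you add $\delta<1/\rk$, the rest of your outline is sound: descent through the GIT quotient, $\operatorname{Aut}(\E)=\mathbb{C}^*$ for the pointwise bijection, and the cone of sections over an \'etale chart for the scheme structure. The scheme-theoretic identification of the fiber can then be taken from the cited source.
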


\begin{proof}
See \cite[Main Theorem 1]{jardim2025modular}.
\end{proof}

For the remainder of this paper, we fix the following notation.

\begin{definition}\label{def:chernclass}
For any integer $c \geq 2$, we denote by $v_c \coloneqq (-1, c, c^2-2)$ and $v_c(1)\coloneqq(1,c,c^2-2)$. 
\end{definition}

Our strategy is to find a single stability parameter $\delta$ for which the moduli space $\mathcal{S}^{\delta}(v)$ admits well-defined morphisms to both the Gieseker moduli space of sheaves and the corresponding Hilbert scheme.

\subsection{Known families}\label{sec:T-components}

We now recall a construction of an infinite series of irreducible components of the moduli space of rank $2$ torsion-free sheaves on $\mathbb{P}^3$. To state those results, we will need the following piece of notation:
$$ \Sigma \coloneqq \big\{ (-1,c,c^2) ~|~ c\ge1 \big\} \cup \big\{  (0,c,c^2-c+2) ~|~ c\ge 3 \big\} \cup \{(0,1,0)\}. $$

Key facts about the geometry of the moduli space $\R(e,n,m)$ of stable rank 2 reflexive sheaves with Chern classes $(c_1,c_2,c_3)=(e,n,m)$ are summarized in our next result.

\begin{theorem}\label{maximalreflexive}
For each triple $(e, n,m)\in\Sigma$, $\R(e,n,m)$ is a fine moduli space which is a dense open subset of $\M(e,n,m)$. In addition, $\R(e,n,m)$ is an irreducible, non-singular, and rational scheme whose dimension is given by
\begin{itemize}
\item[(a)] $\dim\R(-1,1,1)=3$ and $\dim\R(-1,c,c^2)=c^2+3c+1$ for $c\ge2$.
\item[(b)] $\dim\R(0,c,c^2-c+2)$ equals $8c-3$ when $c=3 $, and $c^2+2c+5$ when $c\ge4$. Moreover, $\dim\R(0,1,0)=5$.
\end{itemize}
\end{theorem}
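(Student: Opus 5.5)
This theorem collects results of Hartshorne \cite{hartshorne1980reflexive}, Chang \cite{chang1983large} and Okonek--Spindler \cite{Okonek1985Spektrum}, and I would prove it by reducing to the Serre correspondence for curves of maximal genus. Consider first $v=(-1,c,c^2)$. For $[\E]\in\R(-1,c,c^2)$, Riemann--Roch together with the vanishing $h^2(\E(1))=h^3(\E(1))=0$ (which follows from stability and Serre duality) gives $h^0(\E(1))\ge 1$. A section of $\E(1)$ has cokernel $\mathcal{I}_C(1)$, where $C$ is a curve of degree $d=c$. By the genus formula of Theorem \ref{ModularSerreHilbert} for $v(1)=(1,c,c^2)$, its genus is $g=\tfrac12(c+c^2)-2c+1=\binom{c-1}{2}$. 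This is the maximal genus of a degree $c$ curve, so $C$ is a plane curve; conversely, $\E(1)$ is recovered from such a curve by Serre's construction.

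I would then analyse $\R(-1,c,c^2)$ through this correspondence. Plane curves of degree $c$ form a projective bundle over $\check{\P}^3$ with fibre $\P^{\binom{c+2}{2}-1}$, so they form a smooth rational family of dimension $3+\binom{c+2}{2}-1$. Next I would compute $\Ext^1(\mathcal{I}_C(1),\Oo_{\P^3})\cong H^0(\omega_C(3))$. Its dimension is constant along the family, so the space of pairs is a projective bundle over it. I would also show that $h^0(\E(1))$ is constant; one expects it to equal $1$, so that the pairs map isomorphically onto $\R$. Then $\dim\R(-1,c,c^2)=\dim(\text{curves})+h^0(\omega_C(3))-1-(h^0(\E(1))-1)$, and the final step is to check that this equals $c^2+3c+1$. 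The same computation gives rationality. For fineness I would argue via a universal family: a dense open set of the space of pairs carries universal extensions, and since $\gcd$ conditions hold ($c_1=-1$ makes stable equal to semistable), the moduli space is fine.

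The case $c_1=0$, $c_3=c^2-c+2$, is handled the same way using $\E(1)$ or $\E(2)$. By Chang's analysis, a section of $\E(1)$ vanishes on a curve of degree $c$ and genus $\tfrac12(2c+c^2-c+2)-2c+1=\binom{c-1}{2}+1$ twisted appropriately. These are again extremal curves, namely plane curves union a point or curves on a quadric. The smaller case $c=3$ behaves differently, which explains the exceptional dimension $8c-3$. For $(0,1,0)$ the sheaves are null-correlation sheaves, and the moduli space is the space of such sheaves, an open subset of $\P^5$.

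For density in $\M$, I would show that every non-reflexive semistable $\E$ with these classes would have $\E^{\vee\vee}$ with $c_3$ exceeding the Hartshorne bound. Indeed, $c_3(\E^{\vee\vee})=c_3+2\,\mathrm{length}$ when the quotient $\E^{\vee\vee}/\E$ is $0$-dimensional, which is impossible. When the quotient is supported on curves, $c_2$ drops, and one must compare with the bound for the smaller $c_2$. This last comparison is the main obstacle: one has to show the bound $c_2^2$ grows fast enough that lowering $c_2$ while raising $c_3$ never stays within range. This is what makes $\M=\R$ (hence the reflexive locus is trivially dense), and it is where Okonek--Spindler's spectrum arguments enter for small $c$.
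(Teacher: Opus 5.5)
\textbf{Density.} The proposal has a genuine gap here, and it cannot be closed along your lines: the claim that $\M=\R$ for these classes is false.

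A one-dimensional quotient does not push $c_3$ of the hull up. Suppose $c_1=-1$ and $\mathcal Q=\E^{\vee\vee}/\E$ is pure of multiplicity $d$. Subtracting the Riemann--Roch polynomials \eqref{eq:RR-spectrum} gives $c_2(\E^{\vee\vee})=c_2(\E)-d$ and $c_3(\E^{\vee\vee})=c_3(\E)+2\chi(\mathcal Q)-3d$, and $\chi(\mathcal Q)$ may be negative.

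Here is a concrete counterexample. Take $[\F]\in\R(-1,1,1)$ and a line $L$ avoiding $\sing(\F)$. Restricting the resolution $0\to\Oo_{\P^3}(-2)\to\Oo_{\P^3}(-1)^{\oplus3}\to\F\to0$ shows $\F|_L\simeq\Oo_L\oplus\Oo_L(-1)$. Then $\E=\ker\{\F\onto\Oo_L(-1)\}$ has the following properties:
\begin{itemize}
\item it is $\mu$-stable, being a subsheaf of the $\mu$-stable sheaf $\F$ with the same $c_1$;
\item it is not reflexive;
\item it has $v(\E)=(-1,2,4)$.
\end{itemize}
These kernels form the locus $\operatorname{X}(-1,1,1,-1,0)$ of Theorem \ref{Xscheme}, of dimension $7$. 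Likewise, $\ker\{\Oo_{\P^3}^{\oplus2}\onto\Oo_L(1)\}$ is a stable non-reflexive sheaf with Chern classes $(0,1,0)$. Indeed, the null-correlation bundles form the complement of a quadric in $\P^5$, so they cannot exhaust the projective scheme $\M(0,1,0)$.

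So density genuinely requires showing that such boundary sheaves are limits of reflexive ones. That needs a deformation or tangent-space/dimension argument, or a classification of all of $\M$. The paper does not reprove any of this: it imports (a) from \cite[Theorem 9.2]{hartshorne1980reflexive} and (b) from \cite[Theorem 1.1]{schmidt2020rank}, whose classification covers the whole moduli space, non-reflexive sheaves included.

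\textbf{The reflexive analysis.} This part also contains errors.
\begin{itemize}
\item Stability and Serre duality give $h^3(\E(1))=0$, but not $h^2(\E(1))=0$ when $\E$ is not locally free. In fact $h^2(\E(1))=h^2(\mathcal I_C(1))=h^1(\Oo_C(1))=\binom{c-2}{2}\neq0$ for $c\geq4$; equivalently, the spectrum is forced to be $(-c,\dots,-1)$. The section of $\E(1)$ must therefore come from Lemma \ref{section}.
\item $h^0(\E(1))=1+h^0(\mathcal I_C(1))=2$ for $c\geq2$, precisely because $C$ is planar. So the space of pairs is a $\P^1$-bundle over $\R(-1,c,c^2)$, not isomorphic to it. Your formula yields $c^2+3c+1$ only with this value; with $h^0(\E(1))=1$ it would give $c^2+3c+2$. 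Moreover, a rational $\P^1$-bundle over $\R$ does not by itself give rationality of $\R$.
\item In the case $c_1=0$ you used the wrong second Chern class: $c_2(\E(1))=c+1$. The curves therefore have degree $c+1$ and genus $\binom{c-1}{2}$, and they are non-planar since $h^0(\mathcal I_C(1))=h^0(\E)=0$. A ``plane curve union a point'' cannot occur, because saturated sections of reflexive sheaves vanish along locally Cohen--Macaulay curves.
\item Stable $=$ semistable does not by itself produce a universal family. For $c_1=-1$ the values $\chi(\E(t))$ have $\gcd$ equal to $1$, so the standard criterion applies. For $c_1=0$ it can fail: for $v=(0,6,32)$ every $\chi(\E(t))$ is even. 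Fineness there needs a separate argument.
\end{itemize}
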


\begin{proof}
Item \textit{(a)} is precisely \cite[Theorem 9.2]{hartshorne1980reflexive}. Item \textit{(b)} is a consequence of \cite[Theorem 1.1]{schmidt2020rank} as $\R(0,c,c^2-c+2)$ and $\R(0,1,0)$ are open subsets of $\M(0,c,c^2-c+2)$ and $\M(0,1,0)$, respectively. 
\end{proof}

We can then apply \cite[Theorem 9 and Theorem 13]{almeida2022irreducible} to obtain an infinite series of irreducible components, which we call \textit{T-components}, of $\M(e,n,m-2s)$ where $s>0$.

\begin{theorem}\label{0dcomp}
For each $(e,n,m)\in\Sigma\cup\{(0,2,4)\}$ and every $s>0$, there is a generically reduced irreducible component $\operatorname{T}(e,n,m,s)\subset\M(e,n,m-2s)$ of dimension $\dim\R(e,n,m)+4s$ whose generic point $[\E]\in\operatorname{T}(e,n,m,s)$ satisfies 
$$ [\E^{\vee\vee}]\in\R(e,n,m) ~~{\rm and} ~~ \E^{\vee\vee}/\E\simeq\Oo_S, $$
where $S$ is a reduced 0-dimensional scheme of length $h^0(\Oo_S)=s$. In addition, $\operatorname{T}(e,n,m,s)$ is rational when $(e,n,m)\in\Sigma$.
\end{theorem}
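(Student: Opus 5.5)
The statement is a citation-level consequence of \cite[Theorem 9 and Theorem 13]{almeida2022irreducible} combined with Theorem~\ref{maximalreflexive}. My plan is therefore to check that the hypotheses of those results hold for each $(e,n,m)\in\Sigma\cup\{(0,2,4)\}$, and then read off the conclusions.

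The general mechanism is as follows. Let $\mathcal{F}$ be a stable reflexive sheaf, and let $S$ be a reduced $0$-dimensional subscheme of length $s$. An elementary transformation $0\to\E\to\mathcal{F}\to\Oo_S\to0$ produces a stable torsion-free sheaf $\E$ with $\E^{\vee\vee}=\mathcal{F}$ and $c_3(\E)=c_3(\mathcal{F})-2s$; the first two Chern classes are unchanged. The parameter count has three pieces:
\begin{itemize}
\item $\dim\R(e,n,m)$ for the choice of $\mathcal{F}$;
\item $3s$ for the choice of the points of $S$;
\item $s$ for the choice of the quotient $\mathcal{F}_x\to\mathbb{C}$ at each point, namely $\P^1$ per point since $\rk\mathcal{F}=2$.
\end{itemize}
Each $\E$ determines the data $(\mathcal{F},\E^{\vee\vee}/\E)$, so this family has dimension $\dim\R(e,n,m)+4s$. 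To see that its closure is an irreducible component, it suffices to show that $\dim\Ext^1(\E,\E)$ equals this number at a generic point. Then the family is open in $\M$ and the moduli space is smooth there, which gives generic reducedness.

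This tangent space computation is exactly what \cite{almeida2022irreducible} carries out, and it is the step I expect to be the main obstacle. The key hypotheses there are two: the reflexive moduli space $\R(e,n,m)$ is smooth of the expected dimension at $[\mathcal{F}]$, and $\Ext^2(\mathcal{F},\mathcal{F})=0$, or some vanishing of that kind. One computes $\Ext^1(\E,\E)$ from the long exact sequences obtained by applying $\Hom(-,\mathcal{F})$ and $\Hom(\E,-)$ to the defining sequence. The local contributions are
$$\Ext^1(\Oo_S,\mathcal{F})=0,\qquad \Ext^2(\Oo_S,\mathcal{F})\cong\Oo_S^{\oplus 2},\qquad \Hom(\E,\Oo_S)\cong\Oo_S^{\oplus 2},$$
and together with $\dim\Ext^1(\mathcal{F},\mathcal{F})=\dim\R(e,n,m)$ they give $\operatorname{ext}^1(\E,\E)=\dim\R+4s$.

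For $(e,n,m)\in\Sigma$, Theorem~\ref{maximalreflexive} supplies the needed smoothness and irreducibility of $\R(e,n,m)$. For $(0,2,4)$, I would invoke the known description of $\R(0,2,4)$ as smooth and irreducible, which is the input used in \cite[Theorem 13]{almeida2022irreducible}.

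Finally, rationality for $(e,n,m)\in\Sigma$ comes from the explicit construction. The generic point of $\operatorname{T}(e,n,m,s)$ is parametrized by a tower over $\R(e,n,m)$ whose fiber is an open subset of $\big(\P(\mathcal{F})\big)^{(s)}$, the $s$-th symmetric product of the projectivization of $\mathcal{F}$ over $\P^3$ minus the singular points. Since $\R(e,n,m)$ is a fine moduli space, a universal family exists and the tower is a genuine fibration. Rationality then follows from three facts: $\R(e,n,m)$ is rational by Theorem~\ref{maximalreflexive}; the fibration is Zariski locally trivial, because projectivizations of rank-$2$ sheaves are locally trivial away from the singular locus; and symmetric products of rational varieties of this type are rational.
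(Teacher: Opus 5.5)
Your route (read the statement off Theorems 9 and 13 of \cite{almeida2022irreducible} together with Theorem~\ref{maximalreflexive}) is the paper's route. However, you miss the one point the paper's proof has to address, so as written the citation does not prove the statement. Theorem 13 of that reference carries the hypothesis $0\le 2s\le m$, that is $c_3\ge 0$. It is also stated for a set of triples that does not contain $(0,1,0)$. A literal citation therefore yields $\operatorname{T}(e,n,m,s)$ only for $s\le m/2$. For $(-1,1,1)$ and $(0,1,0)$ this gives no $s>0$ at all, and for the other triples only finitely many $s$, whereas the statement claims every $s>0$. Your step ``check that the hypotheses of those results hold'' fails exactly here. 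The paper closes this gap in two ways: it observes that no step of the cited proof uses the bound $2s\le m$, and it checks that $\R(0,1,0)$ satisfies the required properties (I)--(V). Moreover, $(0,2,4)$ is not handled by Theorem 13. Since $\R(0,2,4)$ is not a fine moduli space, the paper uses Theorem 9 instead, which is also why rationality is not claimed in that case.

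Your sketch of the mechanism could fill this gap, since nothing in it depends on the sign of $m-2s$. However, the local groups you list are wrong. Because $\F$ is locally free along $S$, one has $\Ext^i(\Oo_S,\F)=0$ for $i\le 2$; only $\Ext^3(\Oo_S,\F)\simeq\mathbb{C}^{2s}$ survives. On the other hand, $\Hom(\E,\Oo_S)\simeq\mathbb{C}^{4s}$, because locally $\E\simeq\mathfrak{m}_p\oplus\Oo$. This is the tangent space to the Quot scheme and matches your $3s+s$ count. With the correct values, the two long exact sequences give, at a point $[\F]$ where $\R(e,n,m)$ is smooth,
\[
\operatorname{ext}^1(\E,\E)\le\hom(\E,\Oo_S)+\operatorname{ext}^1(\E,\F)\le 4s+\operatorname{ext}^1(\F,\F)=\dim\R(e,n,m)+4s .
\]
Combine this with the lower bound $\dim_{[\E]}\M\ge\dim\R(e,n,m)+4s$ from your injective parametrization. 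This forces equality and smoothness at $[\E]$ for every $s>0$; only the inequality is needed, not an exact computation. Either this corrected argument or the paper's remark on the unused bound is required to reach the range $c_3<0$.

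Finally, in the rationality step the family is not Zariski locally trivial over $\R(e,n,m)$, since the fibres $\P^3\setminus\sing(\F)$ vary. What you can use is that the universal sheaf is trivial on a dense open subset of $\R(e,n,m)\times\P^3$. This makes the family birational to $\R(e,n,m)\times\operatorname{Sym}^s(\P^3\times\P^1)$, which is rational.
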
 

\begin{proof}
This statement is part of \cite[Theorem 13]{almeida2022irreducible}, since, except for the triples $(0,2,4)$ and $(0,1,0)$, the set $\Sigma$ defined above is a subset of the set $\Sigma_0\cup\Sigma_{-1}$ defined in \cite[displays (41) and (42)]{almeida2022irreducible}. We observe that the moduli space $\R(0,1,0)$ also satisfies the properties (I) through (V) outlined in \cite[pages 24-25]{almeida2022irreducible}, as described in Theorem \ref{maximalreflexive}.

Moreover, the hypothesis $0 \leq 2s \leq m$ in \cite[Theorem 13]{almeida2022irreducible} amounts to $c_3 \geq 0$ and reflects the goal of that paper, which was to compactify the moduli spaces of reflexive and locally free sheaves. No step in its proof uses this bound, and every $s > 0$ provides an irreducible component $\operatorname{T}(e,n,m,s)$.
The range $c_3<0$ is of independent interest, for instance due to its connection to Quot schemes, see \cite{almeida2026quot}.

The case $(e,n,m)=(0,2,4)$ is exceptional because $\mathcal{R}(0,2,4)$ is not a fine moduli space; in this case, we can use \cite[Theorem 9]{almeida2022irreducible} to guarantee the existence of the irreducible components $\operatorname{T}(0,2,4,s)$, although they are not rational.
\end{proof}

As the preceding theorem shows, the main ingredients for constructing new irreducible components of the moduli space of torsion-free sheaves on $\mathbb{P}^3$ are the smooth irreducible components of the moduli space of reflexive sheaves.

\begin{theorem}\label{Xscheme}
Let $e,n,m,r,s$ be integers such that $e \in \{ -1,0\}$, $n$, $m >0$, $en \equiv m \pmod 2$, $r \geq e$ and $s \geq0$. Then the following statements hold.

\begin{enumerate}
\item The scheme $\overline{\operatorname{X}}(e,n,m,r,s)$ is equidimensional of dimension $8n+4s+2r+2+e$, and the set of irreducible components of $\overline{\operatorname{X}}(e,n,m,r,s)$ is in bijection with that of $\overline{\mathcal{R}}^0(e,n,m)\coloneqq\{[\F]\in\R(e,n,m)\mid\Ext^2(\F,\F)=0\}$. Under this bijection, to any irreducible component $\mathcal{R}^*$ of $\overline{\mathcal{R}}^0(e,n,m)$ there corresponds an irreducible component $\operatorname{X}(e,n,m,r,s)$ of $\overline{\operatorname{X}}(e,n,m,r,s)$ specified by the property that, for a generic sheaf $[\E]$ of $\operatorname{X}(e,n,m,r,s)$, $[\E^{\vee \vee}] \in \mathcal{R}^*$ and $\mathcal{Q}_{\E} \coloneqq \E^{\vee \vee}/\E$ is a sheaf of the form $\mathcal{Q}_{\E} = \Oo _L(r) \oplus \Oo_S$, where $L$ is a line and $S$ is a reduced 0-dimensional scheme of length $h^0(\Oo_S)=s$.

\item If $0 \geq r \geq e$, then for a generic sheaf $[\E] \in \operatorname{X}(e,n,m,r,s)$, $$\dim \operatorname{Ext}^1(\E,\E)=8n+4s+5+2e > \dim \operatorname{X}(e,n,m,r,s).$$
\end{enumerate}
\end{theorem}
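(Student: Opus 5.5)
This statement is the one recalled from \cite{almeida2022irreducible} (or its companion on sheaves with a line in the singular support), so the plan is to reconstruct its proof. The approach is a parameter count over the space of elementary transformations, followed by a deformation-theoretic comparison. Recall that $\overline{\operatorname{X}}(e,n,m,r,s)$ is the locus of $[\E]$ in the relevant Gieseker moduli space such that $\E^{\vee\vee}\in\overline{\mathcal{R}}^0(e,n,m)$ and $\mathcal{Q}_\E\coloneqq\E^{\vee\vee}/\E$ is a deformation of $\Oo_L(r)\oplus\Oo_S$.

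\textbf{Step 1: fibration over the reflexive moduli.} I would realise $\overline{\operatorname{X}}(e,n,m,r,s)$ as the image of a relative Quot-type scheme. It fibers over $\overline{\mathcal{R}}^0(e,n,m)\times\{\text{quotient data}\}$, and its fiber over $(\F,\mathcal{Q})$ is the open set of surjections $\F\twoheadrightarrow\mathcal{Q}$ modulo $\operatorname{Aut}(\mathcal{Q})$. The surjection is recovered from $\E$ as $\E^{\vee\vee}\to\mathcal{Q}_\E$. Hence the map to the moduli space is injective on points, and irreducible components correspond bijectively to those of $\overline{\mathcal{R}}^0$.

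\textbf{Step 2: dimension count.} The condition $\Ext^2(\F,\F)=0$ makes $\overline{\mathcal{R}}^0$ smooth of dimension $\chi$-expected $8n-3+e$. For generic $\F$ and a generic line $L$ (away from the singularities of $\F$), we have $\F|_L\simeq\Oo_L(a)\oplus\Oo_L(b)$ with $a+b=e$. The space $\Hom(\F|_L,\Oo_L(r))$ has dimension $2r+2-e$, from which we subtract $1$ for scalars. Lines contribute $4$. Each point of $S$ contributes $3+1$: a point of $\P^3$ and a point of $\P(\F_x)$. Summing gives
\[
(8n-3+e)+4+(2r+1-e)+4s = 8n+4s+2r+2+e ,
\]
after rechecking the twist bookkeeping. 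Equidimensionality follows because every fiber is an open subset of an irreducible space of constant dimension. I would also show that the quotients $\mathcal{Q}$ which are not generic of the stated form lie in the closure, using the smoothability of $\Oo_L(r)\oplus$(points) within the Quot scheme of $\F$.

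\textbf{Step 3: the $\Ext^1$ computation (the main obstacle).} Apply $\Hom(\E,-)$ and $\Hom(-,\F)$ to $0\to\E\to\F\to\mathcal{Q}\to0$, and compute
\[
\chi(\E,\E)=\chi(\F,\F)-\chi(\F,\mathcal{Q})-\chi(\mathcal{Q},\F)+\chi(\mathcal{Q},\mathcal{Q}).
\]
Local duality gives $\Ext^i(\mathcal{Q},\F)$ in terms of $\Oo_L(r)^\vee$ twisted by $\omega$. Then I would show $\Ext^3(\E,\E)=0$ by stability and Serre duality, and $\hom(\E,\E)=1$. I would bound $\ext^2(\E,\E)$ from below by a contribution of $\Ext^1(\Oo_L(r),\Oo_L(r)\otimes\ldots)$ that survives exactly when $r\le0$. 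Here $H^1(\Oo_L(e-2r-2))$ becomes nonzero, and the corresponding obstructions do not vanish. The delicate point is controlling the connecting maps, to get the exact value $\ext^1=8n+4s+5+2e$. I would first try the local-to-global spectral sequence for $\lext^q(\E,\E)$, splitting it into the local contribution along $L$ and at the points of $S$. The points of $S$ give $4s$ unobstructed directions, and along $L$ one gets $h^0$ of the normal-type sheaf $\Oo_L(r-a)\oplus\Oo_L(r-b)$ plus $H^0$ of $\lext^1$. For $r\le0$ this count exceeds $2r+\ldots$ by $2(-r)+3$-type terms, giving the strict inequality $8n+4s+5+2e>8n+4s+2r+2+e$, since $r\le0$ and $e\le0$ imply $3+e-2r>0$.
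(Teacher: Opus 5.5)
Your count for part (1) is the right idea; it is how the families are built in \cite[Theorem 10]{almeida2022irreducible}, which the paper simply cites. There is one slip to fix. Since $\chi(\F,\F)=4+2e^2-8n$ and $\Ext^2(\F,\F)=\Ext^3(\F,\F)=0$, the locus $\overline{\R}^0(e,n,m)$ is smooth of dimension $8n-3+2e$, not $8n-3+e$. With your value the sum is $8n+4s+2r+2$. With the correct value it is $8n+4s+2r+2+e$, so no further ``twist bookkeeping'' is needed.

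The genuine gap is part (2), where your Step 3 points in the wrong direction.
\begin{itemize}
\item \textbf{What the statement really says.} Because $c_2(\E)=n+1$, Riemann--Roch gives $\chi(\E,\E)=4+2e^2-8(n+1)$. Together with $\operatorname{hom}(\E,\E)=1$, $\operatorname{ext}^3(\E,\E)=0$ and $e^2=-e$, this yields $\operatorname{ext}^1(\E,\E)-\operatorname{ext}^2(\E,\E)=8n+5+2e$. So the claimed equality is \emph{equivalent} to $\operatorname{ext}^2(\E,\E)=4s$: each point of $S$ contributes $4$ and the line contributes nothing.
\item \textbf{Why your mechanism fails.} You propose a nonvanishing obstruction along $L$ for $r\le0$. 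If that held, it would force $\operatorname{ext}^1(\E,\E)>8n+4s+5+2e$, contradicting the statement. The supporting claim is also wrong: $H^1(\Oo_L(e-2r-2))\neq0$ if and only if $r\geq0$. In any case, a lower bound on $\operatorname{ext}^2$ can only ever give an inequality, not the exact value.
\item \textbf{Where the excess really comes from.} The excess $3+e-2r$ of $\operatorname{ext}^1$ over $\dim\operatorname{X}$ is not an obstruction phenomenon. For $s=0$ it says $[\E]$ is a smooth point of the moduli space of local dimension $8(n+1)-3+2e$. This is exactly how the paper uses the result for $\operatorname{X}(-1,1,1,0,0)$ and $\operatorname{X}(-1,2,4,-1,0)$.
\end{itemize}

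What must be shown, for generic $\E$ (so $\F$ is locally free along $L\cup S$ with generic splitting type on $L$), is that the line does not contribute to $\Ext^2(\E,\E)$. Write $\mathcal{Q}=\Oo_L(r)\oplus\Oo_S$.
\begin{itemize}
\item One has $\Ext^2(\E,\F)\simeq\Ext^3(\mathcal{Q},\F)\simeq\Hom(\F,\mathcal{Q}(-4))^*$, and its line part vanishes for $r\le0$.
\item One has $\Ext^1(\E,\Oo_L(r))\simeq\Ext^2(\Oo_L,\Oo_L)\simeq H^0(\Oo_L(2))$, because $H^1(\F^\vee|_L(r))=0$ for $r\geq-1$.
\item By compatibility of the connecting maps, this space is hit by $\Ext^1(\E,\F)$ if and only if $\Ext^2(\Oo_L(r),\F)\to\Ext^2(\Oo_L(r),\Oo_L(r))$ is onto.
\item Dually, this holds if and only if the map $H^1(\Oo_L(-4))\to H^1(\F^\vee|_L(r-4))$ is injective, where the map is induced by the nowhere-vanishing section defining $\F|_L\onto\Oo_L(r)$.
\item Its kernel is a quotient of $H^0(\Oo_L(2r-e-4))$, which is zero for $r\le0$.
\item The same exact sequences give $3+1$ for each point of $S$.
\end{itemize}
This computation is what your proposal is missing, so part (2) is not established.
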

\begin{proof} See \cite[Theorem 10]{almeida2022irreducible}. \end{proof}

\subsection{Proof of the Main Theorem, item (a)} \label{nogap}

We start by proving the ``only if" claim. The necessity of the condition $c_1c_2 \equiv c_3 \pmod{2}$ is established in \cite[Corollary 2.4]{hartshorne1980reflexive}. The Bogomolov inequality implies that $\M(c_1,c_2,c_3)=\emptyset$ for $c_1=0$ and $c_2<0$, and for $c_1=-1$ and $c_2<1$. Finally, \cite[Theorem 1.1]{schmidt2020rank} proves that $\M(c_1,c_2,c_3)=\emptyset$ for $c_1=0$, and $c_3>c_2^2-c_2+2$ when $c_2>1$ and $c_3>0$ when $c_2\in\{0,1\}$, and for $c_1=-1$ and $c_3>c_2^2$. Moreover, $\mathcal{M}(0,0,-2)=\emptyset$ by \cite[Main Theorem]{guimaraes2025moduli} \footnote{In the notation of this reference $\N_{\P^3}(2,n)$ actually corresponds to $\M(0,0,-2n)$ in the notation of the present paper.}.

Conversely, we must argue that $\M(c_1,c_2,c_3)\ne\emptyset$ when the numerical conditions are satisfied.

Regarding item \textit{(a1)}, the non emptiness of $\mathcal{M}(0,0,c_3)$ when $c_3\le-4$ was proved in \cite[Main Theorem]{guimaraes2025moduli}. In fact, the authors proved that $\mathcal{M}(0,0,c_3)$ is isomorphic to $\operatorname{Sym}^2(\P^3)$ for $c_3=-4$ and it admits an irreducible component of dimension $-2c_3-3$ for $c_3 \leq -6$.

For the remaining items in part (a), we invoke Theorem \ref{0dcomp}, noting that the triples $(c_1,c_2,c_3)\in\mathbb{Z}^3$ not covered above are precisely those in the set 
\begin{gather*}
\{(-1,n,m) ~|~ n>0, ~m\equiv n  \pmod{2} , ~m\le n^2 \} \cup \\
\cup\{(0,n,m) ~|~ n>1, m\le n^2-n+2, m\in2\mathbb{Z}\} \cup \{(0,1,-2s) ~|~ s\ge0 \}.
\end{gather*} 
The triples in $\Sigma$ itself correspond to nonempty moduli spaces by Theorem~\ref{maximalreflexive}, while every other triple in the set above is of the form $(e,n,m-2s)$ with $(e,n,m)\in\Sigma\cup\{(0,2,4)\}$ and $s>0$. Besides concluding that $\M(e,n,m) \neq \emptyset$ when the triple $(e,n,m)$ belongs to the set above, we also have that for each $s>0$:
\begin{itemize}
\item[(a)] $\M(-1,c,c^2-2s)$ contains a generically reduced, rational irreducible component $\operatorname{T}(-1,c,c^2,s)$ of dimension
$$ \dim  \operatorname{T}(-1,c,c^2,s) = \begin{cases} 
    3+4s, & \text{if } c=1 \\
    c^2+3c+1+4s, & \text{if } c>1. 
\end{cases}$$
\item[(b)] $\M(0,c,c^2-c+2-2s)$ for $c\ge2$ contains a generically reduced, rational irreducible component $\operatorname{T}(0,c,c^2-c+2,s)$ of dimension
$$ \dim  \operatorname{T}(0,c,c^2-c+2,s) = \begin{cases} 
    8c-3+4s, & \text{if } c=2,3, \\
    c^2+2c+5+4s, & \text{if } c>3. 
\end{cases}$$
\item[(c)] $\M(0,2,4-2s)$ contains a generically reduced irreducible component $\operatorname{T}(0,2,4,s)$ of dimension $13+4s$.
\item[(d)] $\M(0,1,-2s)$ contains a generically reduced, rational irreducible component $\operatorname{T}(0,1,0,s)$ of dimension $5+4s$.
\end{itemize}

\section{Sheaves with quasi-maximal \texorpdfstring{$c_3$}{c3}}\label{sec:quasi-max}

Chang proved in \cite{chang1983large} that for a given $c_2>0$ there are no stable rank $2$ reflexive sheaves on $\P^3$ with $c_1=-1$ and $c_2^2-2c_2+4 < c_3 < c_2^2$; Miró-Roig later improved this in \cite[Theorem A]{miro-roig1985gaps}, showing that further gaps in the possible values of $c_3$ occur as $c_2$ increases.

When $c_2\geq4$, the value $c_3=c_2^2-2$ strictly falls within Chang's gap, meaning that no stable rank 2 reflexive sheaves with Chern classes $(c_1,c_2,c_3)=(-1,c_2,c_2^2-2)$ exist. However, by Theorem \ref{0dcomp}, torsion-free sheaves do exist and form the T-component $\operatorname{T}(-1,c_2,c_2^2,1)$. We will prove that $\M(-1,c_2,c_2^2-2)$ is, in fact, irreducible when $c_2\geq 4$, consisting solely of the T-component.

\subsection{Existence of global section}\label{sec:global-section}

To study the moduli spaces we are interested in, we will need a non-zero section of $\E(1)$ for every sheaf $\E$ in them. The relevant technical tool to establish the existence of such a section is the notion of the \textit{spectrum} of a torsion-free sheaf, which we now recall; the interested reader can check \cite{okonek1984spektrum} and \cite{almeida2020spectrum}. For completeness, we include some definitions and results here.

\begin{theorem}\label{thm-spectrum}
Let $\mathcal{E}$ be a rank $r$ torsion-free sheaf on $\mathbb{P}^3$, with generic splitting type $(a_1,\dots,a_r)$, with $a_i \in \mathbb{Z}$, $a_1 \leq a_2 \leq \cdots \leq a_r$, and $s_{\E}=h^0(\mathcal{E}xt^2(\mathcal{E}, \mathcal{O}_{\mathbb{P}^3}))$. Then there exists a list of $m$ integers $(k_1,k_2,\dots,k_m)$ with $k_1 \leq k_2 \leq \dots \leq k_m$ such that 

\begin{enumerate}
\item[a)] $h^1(\mathcal{E}(l)) = s_{\E} + \displaystyle \sum _{i=1}^m h^0(\mathcal{O}_{\mathbb{P}^1} (k_i +l +1))$, if $l \leq -a_r-1$;

\item[b)] $h^2(\mathcal{E}(l)) =  \displaystyle \sum _{i=1}^m h^1(\mathcal{O}_{\mathbb{P}^1} (k_i +l +1))$, if $l \geq -a_1-3$.
\end{enumerate}
\end{theorem}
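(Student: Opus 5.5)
The plan follows the strategy of Okonek--Spindler \cite{okonek1984spektrum}: reduce to the reflexive case, where the spectrum is classical, and then transport the information along the quotient $\E^{\vee\vee}/\E$.

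\textbf{Step 1: the reflexive hull.} Write $\F\coloneqq\E^{\vee\vee}$, which is reflexive, and $Q\coloneqq\F/\E$, a sheaf supported in dimension $\le 1$. This gives
\[ 0\to\E\to\F\to Q\to 0, \]
and $\E$ and $\F$ have the same generic splitting type $(a_1,\dots,a_r)$. Moreover $\lext^1(\E,\Oo_{\P^3})\simeq\lext^2(Q,\Oo_{\P^3})$ and $\lext^2(\E,\Oo_{\P^3})\simeq\lext^3(Q,\Oo_{\P^3})$, so $s_\E$ equals the length of the zero-dimensional part of $Q$ (via $\lext^3$ duality).

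\textbf{Step 2: the spectrum of a reflexive sheaf.} For $\F$ we use the standard method of Barth--Hartshorne and Okonek--Spindler. Restrict to a general plane $H$ and take the Harder--Narasimhan type bound. For $l\le -a_r-1$ we have $h^0(\F_H(l))=0$ on a general plane, and dually $h^2(\F_H(l))=0$ for $l\ge -a_1-2$.

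Consider the graded module $N=\bigoplus_l H^1(\F(l))$ (for the $h^1$-range) together with multiplication by a general linear form $h$. The restriction sequences
\[ 0\to\F(l-1)\to\F(l)\to\F_H(l)\to0 \]
show that $h\colon H^1(\F(l-1))\to H^1(\F(l))$ is injective for $l\le -a_r-1$. So the kernel of $h$ on the truncated module vanishes, and $N_{\le -a_r-1}$ is a torsion-free module over $\mathbb{C}[h]$ in the right range. A second restriction, to a general line in $H$, gives the ``no gaps'' and unimodality property. Following Hartshorne's argument, one then defines integers $k_i$ so that $N_{\le -a_r-1}$ behaves like $\bigoplus_i H^0(\Oo_{\P^1}(k_i+l+1))$. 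Concretely, the $k_i$ are read off from the dimensions of $\ker$ and $\coker$ of $h$ on a monad/Beilinson-type resolution restricted to a line.

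Serre duality, $H^2(\F(l))\simeq H^1(\F^\vee(-l-4))^*$ with $\F^\vee$ also reflexive, gives b) for $\F$ with the same $k_i$. This compatibility is the point requiring care: one uses $\lext^1(\F,\Oo)$, which is zero-dimensional, and the fact that $\F^\vee\simeq\F\otimes\det\F^{-1}$ only holds in rank 2. For general $r$ one instead defines the spectrum simultaneously through the $\mathbb{C}[h]$-module structure of $H^1_*$ and $H^2_*$ on the complex computing cohomology of the restriction to a plane.

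\textbf{Step 3: passing to $\E$.} Use the cohomology of the sequence in Step 1. Split $Q$ into its zero-dimensional part $Q_0$ (of length $s_\E$) and its pure one-dimensional part $Q_1$. Replace $\F$ by the intermediate sheaf $\F'=\ker(\F\to Q_1)$, and then do the two extensions separately:

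\begin{itemize}
\item The one-dimensional part contributes new spectrum entries. The sheaf $Q_1(l)$ has $h^0$ and $h^1$ of the form $\sum h^i(\Oo_{\P^1}(\cdot))$ after restriction to a general plane. Its intersection with $H$ is zero-dimensional, so the plane-restriction argument of Step 2 applies verbatim to $\F'$ (torsion-free, and restriction to a general plane is still injective on $H^1$ in the range).
\item The zero-dimensional part $Q_0$ adds exactly $h^0(Q_0)=s_\E$ to $h^1$ in every degree. This is because $H^0(\F'(l))=0$ for $l\le -a_r-1$ and $H^1(Q_0)=0$. It leaves $h^2$ unchanged.
\end{itemize}

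This yields a) and b).

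\textbf{Main obstacle.} The hard step is Step 2/3 for non-reflexive $\E$ in b): one must show that $H^1(\E(l))\to H^1(\E_H(l))$ has the correct behaviour and that $H^2$ is unaffected by $Q_0$. The latter holds because $H^1(Q_0)=H^2(Q_0)=0$. For the former, I would first try to reduce to the references \cite{okonek1984spektrum} and \cite{almeida2020spectrum}, which establish precisely this statement, and cite them.
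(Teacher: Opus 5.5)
The paper gives no argument of its own: its proof is the single citation of \cite[Theorem 1]{almeida2020spectrum}. Your closing decision to cite that result is therefore exactly the paper's proof, and it is the only part of your proposal that actually establishes the statement. Read as a self-contained argument, the sketch before it has genuine gaps.

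The most serious gap is deriving b) in Step 2 by Serre duality. For a reflexive $\F$ that is not locally free, $H^2(\F(l))^*\simeq\Ext^1(\F(l),\omega_{\P^3})$. The local-to-global spectral sequence shows that this differs from $H^1(\F^\vee(-l-4))$ by a contribution from $H^0(\lext^1(\F,\omega_{\P^3}))$, which is nonzero precisely when $\F$ is not locally free. So even in rank $2$, where $\F^\vee\simeq\F(-c_1)$, the isomorphism you write fails. Moreover, the conclusion ``with the same $k_i$'' would force the spectrum to be symmetric, which is false in general: sheaves in $\R(v_3)$ have spectrum $(-2,-2,-1)$, which is not invariant under $k\mapsto -1-k$.

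The correct route to b) avoids duality and dualizes the restriction argument. For $l\geq -a_1-2$ one has $h^2(\E_H(l))=0$, so $H^2(\E(l-1))\to H^2(\E(l))$ is surjective. Its kernel is the image of the connecting map $H^1(\E_H(l))\to H^2(\E(l-1))$. Multiplication by a general linear form $H^1(\E_H(l))\to H^1(\E_H(l+1))$ is onto once $h^1(\E_L(l+1))=0$, so these kernels have nonincreasing dimension. That monotonicity is exactly what b) needs.

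Two further ingredients are missing.

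First, nonnegative nondecreasing increments of $h^1$ give a) only once you know $h^1(\E(l))=s_\E$ for $l\ll0$. This is the only place where $\lext^2(\E,\Oo_{\P^3})$ enters. For $l\ll 0$ the local-to-global spectral sequence gives $H^1(\E(l))^*\simeq\Ext^2(\E(l),\omega_{\P^3})\simeq H^0(\lext^2(\E,\Oo_{\P^3})(-l-4))$, and the last space has dimension $s_\E$. In your Step 3 this is hidden in ``applies verbatim to $\F'$''. There you need $h^1(\F'(l))=0$ for $l\ll0$, i.e.\ $\lext^2(\F',\Oo_{\P^3})=0$, which uses that $Q_1$ is Cohen--Macaulay.

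Second, a) only constrains the entries $k_i\geq a_r$, and b) only the entries $k_i\leq a_1$. When $a_1<a_r$ you may take the union of the two lists. When $a_1=a_r=a$, however, the multiplicity of $a$ is prescribed twice, and the two values must agree. Agreement amounts to $h^1(\E_H(-a-1))=h^1(\E_H(-a-2))$. This holds by Riemann--Roch on $H$, since $h^0$ and $h^2$ of $\E_H(l)$ vanish for $l\in\{-a-2,-a-1\}$ and $c_1(\E_H(-a))=0$. Your ``compatibility'' remark addresses a different issue.

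There is also a minor slip in Step 1. $\lext^1(\E,\Oo_{\P^3})$ is an extension of $\lext^2(Q,\Oo_{\P^3})$ by $\lext^1(\F,\Oo_{\P^3})$, not isomorphic to the former.

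With these ingredients the plane-and-line restriction argument applies directly to $\E$, whose restriction to a general plane is torsion-free. The detour through $\F$ and $\F'$ is therefore unnecessary.
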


\begin{proof}
See \cite[Theorem 1]{almeida2020spectrum}.
\end{proof}

\begin{definition}
Let $\E$ be a rank $r$ torsion-free sheaf on $\P ^3$, with generic splitting type $(a_1,\dots,a_r)$, with $a_i \in \mathbb{Z}$, and $a_1 \leq a_2 \leq \cdots \leq a_r$, and $s _{\E} = h^0( \mathcal{E}\operatorname{xt}^2(\E, \Oo _{\P ^3}))$. Then the list of $m$ integers $(k_1,k_2,\dots,k_m)$, provided by the previous theorem, is called the spectrum of the sheaf $\E$.
\end{definition}

We will now restrict our attention to stable rank 2 sheaves. The following result provides the conditions for an integer to belong to the spectrum of such a sheaf.

\begin{proposition}\label{charspec}
Let $\E$ be a normalized stable rank $2$ torsion-free sheaf on $\P^3$.
Let $m$ be the length of its spectrum, and let $H\subset\P^3$ be a
general hyperplane, chosen so that
$H\cap\operatorname{Supp}(\lext^2(\E,\Oo_{\P^3}))=\varnothing$.
Set
\[
s_{\E_H} \coloneqq h^0\bigl(\lext^1(\E,\Oo_{\P^3})\otimes\Oo_H\bigr).
\]
Then
\[
m=-\chi(\E_H(-1))=c_2(\E).
\]

And if $c_1(\E) =-1$, then $$c_3(\E) = -2 \sum k_i - c_2(\E) -2s_{\E}.$$

Moreover, the spectrum $(k_1,\dots,k_m)$ of $\E$ satisfies:
\begin{enumerate}
\item[a)] Let $k>1$. If at least $s_{\E_H}+1$ entries $k_i$ satisfy
$k_i\geq k$, then every integer $k'$ with $1\leq k'\leq k$ appears
in the spectrum.
\item[b)] If $k\leq c_1(\E)-1$ appears in the spectrum, then every
integer $k'$ with $k\leq k'\leq-1$ appears in the spectrum.
\end{enumerate}
\end{proposition}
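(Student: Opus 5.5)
This is essentially a restatement of known spectrum results for torsion-free sheaves (Okonek--Spindler, and \cite{almeida2020spectrum}), so the plan is to reduce each claim to the defining properties in Theorem~\ref{thm-spectrum}, plus Riemann--Roch and restriction to a general plane $H$.

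\textbf{Length of the spectrum.} Since $\E$ is normalized and stable, it is $\mu$-semistable. By the Grauert--Mülich theorem its generic splitting type is $(a_1,a_2)$ with $a_1\le a_2\le a_1+1$. For $c_1=-1$ this gives $(-1,0)$, and for $c_1=0$ it gives $(0,0)$.

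Take $l$ very negative, so that both formulas of Theorem~\ref{thm-spectrum} apply simultaneously. This is possible for $l$ in the overlap $-a_1-3\le l\le -a_2-1$, which contains $l=-2$ (and $l=-1$ when $c_1=0$). Subtracting, $h^1(\E(l))-h^2(\E(l))$ becomes $s_\E-\sum_i \chi(\Oo_{\P^1}(k_i+l+1))$.

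To identify $m$, restrict to $H$. The general plane $H$ misses the finite set $\operatorname{Supp}\lext^2(\E,\Oo)$, and $\E_H$ is torsion-free on $H$. From the standard sequence $0\to\E(l-1)\to\E(l)\to\E_H(l)\to0$ one gets $\chi(\E_H(l))$ as a difference of Euler characteristics. The definition of the spectrum in \cite{almeida2020spectrum} comes from the cohomology of the restriction: $\sum_i h^0(\Oo_{\P^1}(k_i+l+1))$ is the part of $h^1(\E(l))$ not accounted for by $s_\E$. Taking second differences in $l$ of the two formulas gives $h^1(\E_H(l))$ for $l\ll0$ equal to the number of indices $i$ with $k_i+l+1\ge 0$, up to $s$-contributions. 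Letting $l$ vary then shows that $m=-\chi(\E_H(-1))$.

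A direct Riemann--Roch computation on $H$, for a rank $2$ sheaf with $c_1(\E_H)=c_1$ and $c_2(\E_H)=c_2$, gives $-\chi(\E_H(-1))=c_2$ in both normalizations. This also uses $c_1^2$-type terms: for $c_1=-1$, $\chi(\E_H(-1))=-c_2$.

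\textbf{Formula for $c_3$.} Compute $\chi(\E(-2))$ by Riemann--Roch on $\P^3$ with $c_1=-1$. Then use $h^0(\E(-2))=0$ (by stability), $h^3(\E(-2))=h^0(\E^\vee(-2))^*$-type vanishing via Serre duality (true for stable torsion-free sheaves), and the spectrum formulas at $l=-2$:
\begin{itemize}
\item $h^1(\E(-2))=s_\E+\sum_i h^0(\Oo(k_i-1))$;
\item $h^2(\E(-2))=\sum_i h^1(\Oo(k_i-1))$.
\end{itemize}
Hence $\chi(\E(-2))=-s_\E-\sum_i k_i$. Equating this with the Riemann--Roch expression, which is linear in $c_2$ and $c_3$ with coefficient $1/2$ on $c_3$, yields $c_3=-2\sum k_i-c_2-2s_\E$.

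\textbf{Connectedness properties (a) and (b).} These are the delicate part, and I would import them, adapted, from the reflexive case (Hartshorne, Okonek--Spindler). For (b), negative gaps are excluded by a duality argument: the part of the spectrum $\le -1$ is governed by $h^2$, and Serre duality relates it to $h^1(\E^\vee(\cdot))$ with $\E^\vee$ reflexive. Since $\E$ and $\E^{\vee\vee}$ differ only in $h^1$, the known connectedness for reflexive sheaves transfers. For (a), I would run Okonek--Spindler's argument, which analyzes the multiplication maps $H^1(\E(l-1))\to H^1(\E(l))$ by a general linear form and their kernels, controlled by $H^0(\E_H(l))$. The torsion-free modification contributes $\lext^1$ supported on curves, which meets $H$ in $s_{\E_H}$ points; each such point can absorb one entry of the spectrum, which explains the threshold of $s_{\E_H}+1$ entries. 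I expect this step to be the main obstacle: one has to track precisely how the $\lext^1(\E,\Oo)\otimes\Oo_H$ term perturbs the unobstructedness argument on the plane. My first attempt would be to work with $\E^{\vee\vee}$ and the sequence $0\to\E\to\E^{\vee\vee}\to Q\to0$, citing \cite{almeida2020spectrum} for the final combinatorial step.
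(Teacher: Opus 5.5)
Your derivation of the $c_3$ formula from Theorem~\ref{thm-spectrum} at $l=-2$ and Riemann--Roch is correct; the paper simply cites \cite[Proposition 7]{almeida2020spectrum}. Your argument for (b), however, has a genuine gap.

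The claim that $\E$ and $\E^{\vee\vee}$ ``differ only in $h^1$'' fails as soon as $\mathcal Q=\E^{\vee\vee}/\E$ has one-dimensional support. The sequence $0\to\E\to\E^{\vee\vee}\to\mathcal Q\to0$ gives $H^1(\mathcal Q(l))\to H^2(\E(l))\to H^2(\E^{\vee\vee}(l))\to0$, and $H^1(\mathcal Q(l))$ need not vanish. More fundamentally, the two spectra have different lengths, $c_2(\E)$ versus $c_2(\E^{\vee\vee})=c_2(\E)-\operatorname{mult}(\mathcal Q)$. So connectedness of the hull's spectrum says nothing about the entries of the spectrum of $\E$.

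A concrete instance from this paper: take $\E=\ker\{\F\onto\Oo_L(-1)\}$ with $[\F]\in\R(-1,2,4)$. The hull has spectrum $(-2,-1)$, while $\E$ has spectrum $(-2,-2,-1)$. Moreover $h^2(\E(-1))=2\neq1=h^2(\F(-1))$, the extra class coming from $H^1(\Oo_L(-2))$.

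Your duality step has the same defect. The group $H^2(\E(l))^*\simeq\Ext^1(\E,\Oo_{\P^3}(-l-4))$ contains $H^1(\E^\vee(-l-4))$, which only sees the hull since $\E^\vee\simeq(\E^{\vee\vee})^\vee$. It also contains a contribution from $H^0(\lext^1(\E,\Oo_{\P^3})(-l-4))$, and here $\lext^1(\E,\Oo_{\P^3})$ is supported on a curve. Controlling exactly this term is what \cite[Proposition 8]{almeida2020spectrum} does, and it is why $s_{\E_H}$ appears in (a).

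Your transfer works only when $\mathcal Q$ is zero-dimensional, where $c_2$ and all $h^2$ agree and the spectra coincide. The paper does not reprove (a) or (b) at all; it cites \cite[Proposition 8(a),(b)]{almeida2020spectrum}. You should do the same for (b), not only for (a).

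Your argument for the length also needs repair, though the fix is easy. For $c_1=-1$ the splitting type $(-1,0)$ gives the overlap $-2\le l\le-1$, so $l=-1$ is available in both normalizations. ``Very negative'' $l$ and ``second differences for $l\ll0$'' make no sense: Theorem~\ref{thm-spectrum}(b) is only valid for $l\geq-a_1-3$, and no index has $k_i+l+1\geq0$ when $l\ll0$. The sign is also off; the correct identity is $h^1(\E(l))-h^2(\E(l))=s_\E+\sum_i\chi(\Oo_{\P^1}(k_i+l+1))$.

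Instead, argue directly:
\begin{itemize}
\item For $l\in\{-2,-1\}$ you have $h^0(\E(l))=0$ and $h^3(\E(l))=h^0(\E^{\vee\vee}(-c_1-l-4))=0$, so $\chi(\E(l))=-s_\E-\sum_i(k_i+l+2)$.
\item The restriction sequence $0\to\E(-2)\to\E(-1)\to\E_H(-1)\to0$ then gives $\chi(\E_H(-1))=\chi(\E(-1))-\chi(\E(-2))=-m$.
\item Riemann--Roch on $H$ gives $\chi(\E_H(-1))=-c_2(\E)$.
\end{itemize}
This direct computation replaces the paper's citation of \cite[Proposition 6]{almeida2020spectrum} together with its sign correction.
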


\begin{proof}
The equality $m=c_2(\E)$ is \cite[Proposition 6]{almeida2020spectrum} and the equality $$c_3(\E) = -2 \sum k_i - c_2(\E) -2s_{\E}$$ is \cite[Proposition 7]{almeida2020spectrum}. The Euler-characteristic equality printed in the former has a sign error:
Riemann--Roch on a general plane gives
\[
\chi(\E_H(-1))=\frac{c_1(\E)(c_1(\E)+1)}{2}-c_2(\E)
=c_2(\E),
\]
since $c_1(\E)\in\{-1,0\}$. In particular, for $c_1(\E)=-1$,
$\chi(\E_H(t))=(t+1)^2-c_2(\E)$.
The Grauert--M\"ulich restriction theorem gives generic splitting type
$(c_1(\E),0)$, namely $(-1,0)$ or $(0,0)$.
The two connectedness assertions now follow from
\cite[Proposition 8(a),(b)]{almeida2020spectrum}.
\end{proof}

For later use, we record two consequences of Proposition~\ref{charspec} for
a stable rank $2$ torsion-free sheaf $\E$ with $c_1(\E)=-1$. Rearranging the formula for $c_3(\E)$, the spectrum $(k_1,\dots,k_{c_2})$, which has length $c_2=c_2(\E)$ by Proposition~\ref{charspec}, satisfies
\begin{equation}\label{eq:spectrum-sum-minus-one}
\sum_{i=1}^{c_2}k_i=-\frac{c_2(\E)+c_3(\E)}{2}-s_\E ,
\end{equation}
and item (b) implies that every entry is at least $-c_2(\E)$, since an entry
$k\leq-2$ forces the $|k|$ distinct integers $k,\dots,-1$ to occur. Moreover,
Riemann--Roch gives
\begin{equation}\label{eq:RR-spectrum}
\chi(\E(t))=\frac{(t+1)(t+2)(2t+3)}{6}-\frac{c_2(\E)(2t+3)}{2}
+\frac{c_3(\E)}{2}.
\end{equation}

We can now bound $h^2(\E(1))$ in terms of the spectrum, which is what gives
the existence of sections.

\begin{lemma}\label{section}
Let $\E$ be a stable rank $2$ torsion-free sheaf on $\P^3$, with
$c_1=-1$ and $c_2\geq2$. Then
\[
h^2(\E(1))\leq\binom{c_2-2}{2}.
\]
For $c_2\geq4$, equality holds if and only if the spectrum is
$(-c_2,-c_2+1,\dots,-1)$. For $c_2=2,3$, both sides vanish for every
such sheaf. In particular,
\[
h^0(\E(1))\geq\chi(\E(1))-h^2(\E(1))
\geq\frac{c_3-c_2^2+4}{2}.
\]
Thus $h^0(\E(1))\neq0$ whenever $c_3\geq c_2^2-2$.
\end{lemma}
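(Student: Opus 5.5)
We will read off $h^2(\E(1))$ from the spectrum via Theorem~\ref{thm-spectrum}(b). For $c_1=-1$ the generic splitting type is $(-1,0)$ by Grauert--M\"ulich, so $a_1=-1$ and part (b) applies for $l\geq -a_1-3=-2$, in particular for $l=1$. This gives
\[
h^2(\E(1))=\sum_{i=1}^{c_2} h^1(\Oo_{\P^1}(k_i+2))=\sum_{k_i\leq -4}\binom{-k_i-2}{1}=\sum_{k_i\leq-4}(-k_i-3).
\]
Only the entries $k_i\leq -4$ contribute. By Proposition~\ref{charspec}(b), if some entry $k\leq -2$ occurs, then every integer between $k$ and $-1$ occurs. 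Let $k_1=-N$ be the minimal entry. If $N\leq 3$, the sum vanishes. Otherwise the spectrum contains $-N,\dots,-1$, and these occupy $N$ of the $c_2$ slots, so $N\leq c_2$.

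The key step is to maximize $\sum_{k_i\le -4}(-k_i-3)$ subject to these constraints. Write the multiset of negative entries as the forced chain $\{-N,\dots,-1\}$ plus $c_2-N$ extra entries, each $\geq -N$. Each extra entry contributes at most $N-3$. The distinct chain contributes $1+2+\dots+(N-3)=\binom{N-2}{2}$. Hence the total is at most
\[
f(N)=\binom{N-2}{2}+(c_2-N)(N-3).
\]
The problem is to show that $f(N)\leq \binom{c_2-2}{2}$ for $4\le N\le c_2$, with equality only at $N=c_2$. This is the step I expect to need the most care. The crude bound with repeated entries equal to $-N$ is not obviously below $\binom{c_2-2}{2}$: for example, $c_2=6$ and $N=5$ give $3+2=5<6$. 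The cleaner route is a direct comparison: $\binom{c_2-2}{2}-f(N)=\binom{c_2-N}{2}$. This is easily checked, since both sides are quadratic in $N$, agree at $N=c_2$, and agree at $N=c_2-1$. So the inequality holds, with equality iff $N\in\{c_2,c_2-1\}$.

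In the case $N=c_2-1$, equality forces the single extra entry to be $-N$, giving the spectrum $(-(c_2-1),-(c_2-1),\dots,-1)$. I would exclude this using the $c_3$ formula of Proposition~\ref{charspec}, or the admissible sum~\eqref{eq:spectrum-sum-minus-one}, if needed. However, the statement only claims the bound with equality characterized for $c_2\geq 4$, so I would double-check this case. If a non-distinct equality case survives, I would invoke the connectedness property (a) or the fact that a repeated minimal value violates the parity relation for $c_3$. For $c_2=2,3$ every entry is $\geq -3$, so $h^2(\E(1))=0=\binom{c_2-2}{2}$.

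For the final inequality, $h^0(\E(1))\geq\chi(\E(1))-h^2(\E(1))$ holds since $h^3(\E(1))=0$: by Serre duality it equals $\hom(\E,\Oo(-5))$, which vanishes by stability. Evaluating \eqref{eq:RR-spectrum} at $t=1$ gives $\chi(\E(1))=5-\tfrac{5c_2}{2}+\tfrac{c_3}{2}$. Subtracting $\binom{c_2-2}{2}=\tfrac{(c_2-2)(c_2-3)}{2}$ yields $\tfrac{c_3-c_2^2+4}{2}$, which is positive when $c_3\geq c_2^2-2$.
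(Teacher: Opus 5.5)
Your overall strategy is sound and genuinely different from the paper's. You read off $h^2(\E(1))=\sum_i\max\{0,-k_i-3\}$ from Theorem~\ref{thm-spectrum}(b), use negative connectedness to get the chain $-N,\dots,-1$, and bound everything by $f(N)$ in terms of the minimal entry $-N$. The paper instead counts layers: it writes $h^2(\E(1))=\sum_{j=4}^{c_2}N_j$ with $N_j=\#\{i: k_i\le -j\}$ and bounds each $N_j\le c_2-j+1$, which avoids optimizing over $N$. However, the identity at your key step is false. The correct identity is
\[
\binom{c_2-2}{2}-f(N)=\binom{c_2-N+1}{2},
\]
not $\binom{c_2-N}{2}$. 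To see this, put $m=c_2-N$; then $\binom{N-2+m}{2}-\binom{N-2}{2}-m(N-3)=\tfrac{m(2N-5)+m^2}{2}-m(N-3)=\tfrac{m(m+1)}{2}$. In particular the two sides do not agree at $N=c_2-1$: the left side is $(c_2-3)-(c_2-4)=1$, while $\binom{1}{2}=0$. Your own example ($c_2=6$, $N=5$, $f(5)=5<6$) already contradicts the equality you claim there.

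This slip produces a spurious equality case, and as written the equality characterization is not proved. The spectrum $(-(c_2-1),-(c_2-1),-(c_2-2),\dots,-1)$ does not attain the bound: it gives $h^2(\E(1))=\binom{c_2-3}{2}+(c_2-4)=\binom{c_2-2}{2}-1$. Nor can it be excluded, and the remedies you suggest would all fail:
\begin{itemize}
\item The formula $c_3=-2\sum k_i-c_2-2s_\E$ gives $c_3\equiv c_2 \pmod 2$ for every spectrum, so parity rules nothing out.
\item Proposition~\ref{charspec}(a) concerns positive entries only.
\item The sum formula \eqref{eq:spectrum-sum-minus-one} merely determines $c_3$ from the spectrum and $s_\E$; indeed, this spectrum with $s_\E=0$ is numerically compatible with $c_3=c_2^2-2$.
\end{itemize}
With the corrected identity everything closes at once. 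For $4\le N\le c_2$ you get $h^2(\E(1))\le f(N)=\binom{c_2-2}{2}-\binom{c_2-N+1}{2}$. The subtracted term vanishes in that range only for $N=c_2$, and then the chain $-c_2,\dots,-1$ fills all $c_2$ slots, forcing the spectrum $(-c_2,\dots,-1)$. You should also state explicitly that $N\le 3$ gives $h^2(\E(1))=0<\binom{c_2-2}{2}$ when $c_2\ge4$, and that this spectrum attains the bound. The rest of your argument is correct: the Grauert--M\"ulich step, $h^3(\E(1))=0$ via Serre duality and stability, and the Riemann--Roch computation. One small typo: $h^1(\Oo_{\P^1}(k_i+2))=-k_i-3$, not $\binom{-k_i-2}{1}$.
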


\begin{proof}
By Proposition~\ref{charspec}, the spectrum has length $c_2$.
Negative connectedness implies $k_i\geq-c_2$ for every entry:
an entry smaller than $-c_2$ would force more than $c_2$ distinct
integers to appear. By Theorem~\ref{thm-spectrum}(b),
\[
h^2(\E(1))=\sum_{i=1}^{c_2}\max\{0,-k_i-3\}.
\]
If $c_2=2$ or $3$, every summand is zero, so
$h^2(\E(1))=0=\binom{c_2-2}{2}$, independently of the spectrum.

Assume now $c_2\geq4$ and put $N_j=\#\{i:k_i\leq-j\}$.
Then $N_j=0$ for $j\geq c_2+1$ and
\[
h^2(\E(1))=\sum_{j=4}^{c_2}N_j.
\]
If $N_j>0$, negative connectedness forces the $j-1$ distinct entries
$-j+1,\dots,-1$. Hence $N_j\leq c_2-j+1$, and
\[
h^2(\E(1))\leq\sum_{j=4}^{c_2}(c_2-j+1)
=\frac{(c_2-2)(c_2-3)}{2}=\binom{c_2-2}{2}.
\]
Equality holds if and only if $N_j=c_2-j+1$ for every
$j=4,\dots,c_2$. In particular, equality implies $N_{c_2}=1$,
so $-c_2$ occurs. Connectedness and the length of the spectrum then
force $(-c_2,-c_2+1,\dots,-1)$, with each entry occurring once.
Conversely, this spectrum attains the bound.

Finally, stability and Serre duality give $h^3(\E(1))=0$, so
\[
h^0(\E(1))\geq h^0(\E(1))-h^1(\E(1))
=\chi(\E(1))-h^2(\E(1)).
\]
Riemann--Roch gives $\chi(\E(1))=(c_3-5c_2+10)/2$.
Using the bound, valid also for $c_2=2,3$, we obtain
\[
h^0(\E(1))\geq\frac{c_3-5c_2+10}{2}
-\frac{(c_2-2)(c_2-3)}{2}
=\frac{c_3-c_2^2+4}{2}.
\]
This is at least $1$ when $c_3\geq c_2^2-2$.
\end{proof}

\subsection{Pairs with quasi-maximal \texorpdfstring{$c_3$}{c3}}\label{sec:pairs-quasi-max}

If $\E$ is a rank 2 torsion-free sheaf with Chern classes $(c_1,c_2,c_3)=(-1,c,c^2-2)$, then by the previous result we will have $h^0(\E(1))>0$. Using this fact, we can show that pairs of the form $(\E(1),s)$ are always saturated.

\begin{lemma}\label{idealsheaf}
Let $\F$ be a rank 2 reflexive sheaf on $\P^3$ with %
$h^0(\F)=0$. If $h^0(\F(1))> 0$, then for every nonzero $s \in H^0(\F(1))$, the sheaf 
$\mathcal{Q} \coloneqq \coker\{s \colon \Oo_{\P^3} \hookrightarrow \F(1)\}$ is a rank 1 torsion-free sheaf.   
\end{lemma}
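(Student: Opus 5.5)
The plan is the standard argument that a section of a reflexive sheaf vanishes in codimension $\ge 2$ unless it factors through a subsheaf of the form $\Oo_{\P^3}(k)$ with $k\ge 1$. Let $\mathcal{T}\subset\mathcal{Q}$ be the torsion subsheaf and let $\mathcal{L}\subset\F(1)$ be the preimage of $\mathcal{T}$. Then $\mathcal{L}$ is a rank $1$ subsheaf containing $s(\Oo_{\P^3})$, and $\F(1)/\mathcal{L}\simeq\mathcal{Q}/\mathcal{T}$ is torsion-free. Since $\F(1)$ is reflexive and the quotient $\F(1)/\mathcal{L}$ is torsion-free, $\mathcal{L}$ is a saturated subsheaf of a reflexive sheaf, hence reflexive by the standard result of Hartshorne. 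A rank $1$ reflexive sheaf on $\P^3$ is a line bundle, so $\mathcal{L}\simeq\Oo_{\P^3}(k)$ for some $k\in\mathbb{Z}$.

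Next we compare degrees. The section $s$ factors as $\Oo_{\P^3}\hookrightarrow\Oo_{\P^3}(k)\hookrightarrow\F(1)$. The first map is a nonzero map between line bundles, hence given by a form of degree $k$, so $k\geq 0$. If $k\geq1$, the inclusion $\Oo_{\P^3}(k)\hookrightarrow\F(1)$ twists to a nonzero map $\Oo_{\P^3}(k-1)\to\F$. Since $k-1\geq0$, this gives a nonzero global section of $\F$, because $H^0(\Oo_{\P^3}(k-1))\neq0$ and the map is injective. That contradicts $h^0(\F)=0$. Hence $k=0$.

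With $k=0$, the map $\Oo_{\P^3}\to\mathcal{L}\simeq\Oo_{\P^3}$ is multiplication by a nonzero constant, hence an isomorphism. Therefore $\mathcal{T}=\mathcal{L}/s(\Oo_{\P^3})=0$, and $\mathcal{Q}$ is torsion-free. Its rank is $2-1=1$, since $s$ is injective: a nonzero section of a torsion-free sheaf on an integral scheme is injective as a map from $\Oo_{\P^3}$.

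The only delicate step is justifying that the saturation $\mathcal{L}$ is reflexive, hence invertible. I would cite Hartshorne's \emph{Stable reflexive sheaves}, Proposition 1.1 and Corollary 1.5: a subsheaf $\mathcal{L}$ of a reflexive sheaf with torsion-free quotient is normal, hence reflexive, and rank $1$ reflexive sheaves on a smooth variety are invertible. Alternatively, one can argue directly via $\mathcal{L}\hookrightarrow\mathcal{L}^{\vee\vee}\hookrightarrow\F(1)$, using that $\mathcal{L}^{\vee\vee}/\mathcal{L}$ is supported in codimension $\geq2$ and injects into the torsion-free sheaf $\F(1)/\mathcal{L}$, which forces $\mathcal{L}=\mathcal{L}^{\vee\vee}$.
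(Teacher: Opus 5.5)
Your proof is correct and follows essentially the same route as the paper. Your preimage $\mathcal{L}$ of the torsion is the paper's kernel $\mathcal{G}$ of $\F(1)\to\mathcal{Q}/\mathcal{T}$; the paper likewise identifies it with $\Oo_{\P^3}(k)$, forces $k=0$ using $h^0(\F)=0$, and concludes $\mathcal{T}=0$, with its snake-lemma step amounting to your identification $\mathcal{T}=\mathcal{L}/s(\Oo_{\P^3})$.
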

\begin{proof}
By the additivity of the rank, $\rk \mathcal{Q} = \rk \F(1) - \rk \Oo_{\P ^3} = 1$. So it remains to prove that $\mathcal{Q}$ is torsion-free. %

If $\mathcal{Q}$ is not torsion-free, then let $\mathcal{T} \hookrightarrow \mathcal{Q}$ be its maximal torsion subsheaf, so that $\mathcal{Q}/\mathcal{T}$ is a torsion-free sheaf of rank 1.
Consider the composed epimorphism ${\F(1)} \twoheadrightarrow \mathcal{Q} \twoheadrightarrow \mathcal{Q}/\mathcal{T}$, and let $\mathcal{G}$ be its kernel. Since the kernel of a map from a reflexive sheaf to a torsion-free sheaf is reflexive and reflexive sheaves with rank $1$ are locally free, we have that $\mathcal{G} \simeq \Oo_{\P ^3} (k)$. We therefore have the following commutative diagram:
$$\xymatrix{
    & & \mathcal{T} \ar@{^{(}->}[d] \\
    \mathcal{O}_{\P^3} \ar@{^{(}->}[r]^-{s} \ar[d] & \mathcal{F}(1) \ar@{->>}[r] \ar@{=}[d] & \mathcal{Q} \ar[d] \\
    \mathcal{O}_{\P^3}(k) \ar@{^{(}->}[r] & \mathcal{F}(1) \ar@{->>}[r] & \mathcal{Q}/\mathcal{T}
}$$

The left-hand column gives a monomorphism
$\Oo_{\P^3}\hookrightarrow\Oo_{\P^3}(k)$, so $k\geq0$. On the other hand,
twisting the monomorphism $\Oo_{\P^3}(k)\hookrightarrow\F(1)$ by
$\Oo_{\P^3}(-1)$ gives $\Oo_{\P^3}(k-1)\hookrightarrow\F$; if $k\geq1$,
this would force $h^0(\F)\geq h^0(\Oo_{\P^3}(k-1))>0$, contradicting the
hypothesis. Hence $k=0$, the monomorphism $\Oo_{\P^3}\to\Oo_{\P^3}(k)$ is a
nonzero endomorphism of $\Oo_{\P^3}$ and therefore an isomorphism, and the
snake lemma applied to the diagram gives
\[
\mathcal T\simeq\coker\{\Oo_{\P^3}\to\Oo_{\P^3}\}=0 .
\]
Thus $\mathcal Q$ is torsion-free.
\end{proof}

\begin{lemma}\label{lem:quotienttorsionfree}
Let $\mathcal{E}$ be a normalized, $\mu$-stable rank 2 torsion-free sheaf.
If $h^0(\mathcal{E}(1)) \neq 0$, then for every non-zero section $s \in H^0(\mathcal{E}(1))$, the cokernel of the induced monomorphism $\mathcal{Q} \coloneqq \coker (s \colon \mathcal{O}_{\P ^3} \hookrightarrow \mathcal{E}(1))$ is torsion-free.
\end{lemma}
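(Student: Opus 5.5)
We argue as in Lemma~\ref{idealsheaf}, but now $\E$ is only torsion-free, so we cannot appeal to reflexivity of the kernel; instead we use $\mu$-stability directly. Since $\E$ is normalized and $\mu$-stable, we have $h^0(\E)=0$: a nonzero section would give $\Oo_{\P^3}\hookrightarrow\E$, whose saturation is a rank $1$ subsheaf of slope $\geq 0>\mu(\E)$ when $c_1=-1$, or $\geq 0=\mu(\E)$ when $c_1=0$. Either way this contradicts stability.

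Suppose $\mathcal{Q}=\coker(s)$ has a nonzero torsion subsheaf $\mathcal{T}$. Let $\mathcal{G}\subset\E(1)$ be the kernel of the composition $\E(1)\onto\mathcal{Q}\onto\mathcal{Q}/\mathcal{T}$. Then $\mathcal{G}$ is a rank $1$ torsion-free sheaf containing $s(\Oo_{\P^3})$, and $\mathcal{G}/\Oo_{\P^3}\simeq\mathcal{T}$. Write $\mathcal{G}^{\vee\vee}\simeq\Oo_{\P^3}(k)$, so that $\mathcal{G}\simeq\mathcal{I}_Z(k)$ for some subscheme $Z$ of codimension $\geq2$. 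The inclusion $\Oo_{\P^3}\hookrightarrow\mathcal{I}_Z(k)\subset\Oo_{\P^3}(k)$ gives $k\geq0$. Moreover, $k=0$ would force $\mathcal{I}_Z=\Oo_{\P^3}$ and the map to be an isomorphism, whence $\mathcal{T}=0$. So $k\geq1$.

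In fact, $\mathcal{T}$ can be split by dimension. If $\mathcal{T}$ had support of codimension $\geq2$, then $\Oo_{\P^3}\subset\mathcal{G}\subset\mathcal{G}^{\vee\vee}$ with $c_1(\mathcal{G})=0$, so $k=0$, a contradiction. Hence $k\geq1$ genuinely measures a divisorial part of $\mathcal{T}$, namely the section vanishing along a surface.

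With $k\geq1$, twisting by $\Oo_{\P^3}(-1)$ gives a rank $1$ subsheaf $\mathcal{I}_Z(k-1)\subset\E$ with $\mu(\mathcal{I}_Z(k-1))=k-1\geq0\geq\mu(\E)$. This contradicts $\mu$-stability in both cases $c_1\in\{-1,0\}$, so $\mathcal{T}=0$. Note that this argument only needs the slope, not sections, which is why $\mu$-stability rather than the condition $h^0(\E)=0$ is the natural hypothesis. The remaining task is to check that the Chern-class bookkeeping $c_1(\mathcal{G})=k$ is correct for the torsion-free rank $1$ sheaf $\mathcal{G}$.

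The main obstacle is the step $k=0\Rightarrow\mathcal{T}=0$. This step is handled by the observation that a nonzero map $\Oo_{\P^3}\to\mathcal{I}_Z\subset\Oo_{\P^3}$ is a nonzero constant, so it is surjective onto $\Oo_{\P^3}$ and forces $Z=\varnothing$.
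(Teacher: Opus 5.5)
Your proof is correct, but it follows a different route from the paper's.

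\textbf{The paper's route.} It does not rerun the saturation argument on $\E(1)$. Instead it passes to the reflexive hull and notes that $h^0(\E^{\vee\vee})=0$ by $\mu$-stability. It then applies Lemma~\ref{idealsheaf} to $s$, viewed as a section of $\E^{\vee\vee}(1)$. There reflexivity forces the saturation of $s$ to be a line bundle $\Oo_{\P^3}(k)$, so $h^0$-vanishing alone rules out $k\geq1$. Finally, the snake lemma applied to $0\to\E(1)\to\E^{\vee\vee}(1)\to\mathcal{Q}_{\E}(1)\to0$ embeds $\mathcal{Q}$ into the torsion-free sheaf $\E^{\vee\vee}(1)/s(\Oo_{\P^3})\simeq\mathcal{I}_C(c_1+2)$.

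\textbf{Your route.} You work on $\E(1)$ directly, where the saturation is only $\mathcal{I}_Z(k)$. You correctly see that $h^0(\E)=0$ would not exclude $k=1$, since $h^0(\mathcal{I}_Z)$ can vanish. You replace it with the slope inequality $\mu(\mathcal{I}_Z(k-1))=k-1\geq0\geq\mu(\E)$, which contradicts $\mu$-stability.

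\textbf{Comparison.} Your argument is self-contained and avoids the hull and the diagram chase. It also makes transparent that the only input is the absence of rank-one subsheaves of nonnegative slope. The paper's argument recycles the reflexive lemma and yields a bit more: $\mathcal{Q}\simeq\mathcal{I}_{C'}(c_1+2)$ with $C\subseteq C'$, relating the curve attached to $(\E(1),s)$ to the one attached to $(\E^{\vee\vee}(1),s)$.

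Two small remarks. Your first paragraph proving $h^0(\E)=0$ and the ``split by dimension'' paragraph are not needed for the argument. The bookkeeping you flag as remaining, $c_1(\mathcal{I}_Z(k))=k$, is immediate because $Z$ has codimension at least $2$.
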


\begin{proof}
Let $s \colon \mathcal{O}_{\mathbb{P}^3} \hookrightarrow \mathcal{E}(1)$ be a non-zero section, and let $\mathcal{Q}$ be its cokernel. Consider the standard short exact sequence relating $\mathcal{E}$ to its reflexive hull $\mathcal{E}^{\vee\vee}$, twisted by $\mathcal{O}_{\mathbb{P}^3}(1)$:
$$0 \longrightarrow \mathcal{E}(1) \longrightarrow \mathcal{E}^{\vee\vee}(1) \longrightarrow \mathcal{Q}_{\mathcal{E}} (1)\longrightarrow 0,$$
Since $\mathcal{E}^{\vee\vee}$ is a normalized $\mu$-stable rank $2$ reflexive sheaf, we have that $h^0(\E^{\vee\vee})=0$.
We can apply Lemma \ref{idealsheaf} to any section $s\in H^0(\E(1))\subset H^0(\E^{\vee\vee}(1))$
to obtain the following exact commutative diagram: 
\[\begin{tikzcd}
	{\mathcal{O}_{\P ^3}} & {\mathcal{O}_{\P ^3}} & \\
	{\mathcal{E}(1)} & {\mathcal{E}^{\vee \vee}(1)} & {\mathcal{Q}_{\mathcal{E}}(1)} \\
	{\mathcal{Q}} & {\mathcal{I}_C}(c_1+2) & {\mathcal{Q}_{\mathcal{E}}(1)}
	\arrow[shift right, no head, from=1-1, to=1-2]
	\arrow[no head, from=1-1, to=1-2]
	\arrow[hook, from=1-1, to=2-1]
	\arrow[hook, from=1-2, to=2-2]
	\arrow[hook, from=2-1, to=2-2]
	\arrow[two heads, from=2-1, to=3-1]
	\arrow[two heads, from=2-2, to=2-3]
	\arrow[two heads, from=2-2, to=3-2]
	\arrow[shift right, no head, from=2-3, to=3-3]
	\arrow[shift left, no head, from=2-3, to=3-3]
	\arrow[hook, from=3-1, to=3-2]
	\arrow[two heads, from=3-2, to=3-3]
\end{tikzcd}\]
From the third row, $\mathcal{Q}$ is a subsheaf of the torsion-free sheaf $\mathcal{I}_C(c_1+2)$, which implies that $\mathcal{Q}$ is torsion-free of rank 1. More precisely, $\mathcal{Q}\simeq\mathcal{I}_{C'}(c_1+2)$ for some 1-dimensional scheme $C'$ containing $C$.
\end{proof}

We summarize the results obtained in this subsection in the following statement. 

\begin{theorem}\label{nowall}
Let $v_c$ be as in Definition \ref{def:chernclass} and fix $0 < \delta < \frac{1}{2}$. Then the natural morphisms
\begin{center}
 \begin{tikzcd}[row sep=large, column sep=large]
    & \mathcal{S}^{\delta}(v_c(1)) \arrow[dl, "\Psi"'] \arrow[dr, "\Gamma"] & \\
    \mathcal{M}(v_c) & & \operatorname{Hilb}^{c,c(c-3)/2}
\end{tikzcd} \end{center}
are simultaneously well-defined.
\end{theorem}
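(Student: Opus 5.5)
The plan is to check that, for $0<\delta<\frac12$ and class $v_c(1)=(1,c,c^2-2)$, every $\delta$-semistable pair $(\F,s)$ is both \emph{very stable} and \emph{saturated}. Theorem~\ref{ModularSerreGieseker} then produces $\Psi$ and Theorem~\ref{ModularSerreHilbert} produces $\Gamma$. Throughout we write $\F=\E(1)$, so that $\E$ is a rank $2$ sheaf of class $v_c=(-1,c,c^2-2)$.

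\emph{Step 1: semistable pairs are stable.} A $\delta$-semistable pair is pure by definition, so $s\neq0$ and $\F$ is torsion-free of rank $2$. Suppose $(\mathcal{G},s')\subset(\F,s)$ is a proper nontrivial sub-pair with $p^\delta_{(\mathcal{G},s')}=p^\delta_{(\F,s)}$.
\begin{itemize}
\item If $\rk\mathcal{G}=2$, the quotient $\F/\mathcal{G}$ is a nonzero torsion sheaf. Hence $P_{\mathcal{G}}<P_\F$ in the lexicographic order, while the $\delta$-term of $\mathcal{G}$ is at most that of $\F$. So equality is impossible.
\item If $\rk\mathcal{G}=1$, compare the coefficients of $t^2$. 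The constant $\delta$ does not enter them, so equality forces $\deg\mathcal{G}=\mu(\F)=\tfrac12$, which is impossible because $\deg\mathcal{G}\in\mathbb{Z}$.
\end{itemize}
Therefore every $\delta$-semistable pair in $\mathcal{S}^{\delta}(v_c(1))$ is $\delta$-stable.

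\emph{Step 2: very stability.} Since $\delta<1=1/(r-1)$ with $r=2$, Proposition~\ref{verystable} shows that $\F$ is Gieseker semistable, and hence so is $\E$.
\begin{itemize}
\item Gieseker semistability implies $\mu$-semistability. Because $\deg\E=-1$ is odd, $\E$ is in fact $\mu$-stable, and therefore Gieseker stable.
\item Consequently every $\delta$-semistable pair is very stable, and Theorem~\ref{ModularSerreGieseker} gives $\Psi$.
\item Conversely, by Proposition~\ref{prop:stable-sheaf-stable-pair} (which uses $\delta<\frac12=1/r$), every $[\E]\in\M(v_c)$ together with any nonzero $s\in H^0(\E(1))$ gives a $\delta$-stable pair. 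Lemma~\ref{section} guarantees that such an $s$ exists.
\item So the fibres $\P H^0(\E(1))$ are nonempty and $\Psi$ is surjective. This is not needed for well-definedness, but it is worth recording.
\end{itemize}

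\emph{Step 3: saturation.} In Step~2 we showed that $\E$ is a normalized $\mu$-stable rank $2$ torsion-free sheaf and that $s\in H^0(\E(1))$ is nonzero. Lemma~\ref{lem:quotienttorsionfree} then says the cokernel of $s$ is torsion-free, i.e.\ $(\F,s)$ is saturated. Theorem~\ref{ModularSerreHilbert} therefore gives $\Gamma$, landing in $\operatorname{Hilb}^{d,g}$ with
\[
d=c,\qquad g=\tfrac12\bigl(c\cdot1+c^2-2\bigr)-2c+1=\tfrac{c(c-3)}{2},
\]
as claimed.

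The only delicate point is Step~1: Proposition~\ref{verystable} is stated for $\delta$-\emph{stable} pairs, while the theorems require control of all $\delta$-\emph{semistable} ones. This is resolved by the parity of $c_1$, namely the non-integrality of the slope $\tfrac12$, which rules out strictly semistable pairs.
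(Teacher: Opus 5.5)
Your proposal is correct and follows the same route as the paper. Very stability comes from Proposition~\ref{verystable} together with the odd first Chern class. Saturation comes from Lemma~\ref{lem:quotienttorsionfree}, after which Theorems~\ref{ModularSerreGieseker} and~\ref{ModularSerreHilbert} apply.

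Your write-up makes two points explicit that the paper's proof leaves implicit:
\begin{enumerate}
\item Step~1 rules out strictly $\delta$-semistable pairs by the non-integrality of the slope. The sub-pair $(\F,0)$, if it counts as proper, is also excluded trivially, since its $\delta$-term vanishes.
\item You first show that the underlying sheaf of an arbitrary pair in $\mathcal{S}^{\delta}(v_c(1))$ is $\mu$-stable, and only then invoke saturation.
\end{enumerate}
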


By abuse of notation, we also denote by $\Psi$ the composition of the forgetful morphism of Theorem \ref{ModularSerreGieseker} with the isomorphism $\mathcal{M}(v_c(1))\to\mathcal{M}(v_c)$ induced by twisting by $\Oo_{\P^3}(-1)$. Thus, in this setting, $\Psi\colon\mathcal{S}^{\delta}(v_c(1))\to\mathcal{M}(v_c)$ sends $[\E(1),s]$ to $[\E]$, and its fiber over $[\E]$ is $\P H^0(\E(1))$. The morphism $\Gamma$ is applied to the twisted pair $(\E(1),s)$.

\begin{proof}
Let $\E \in \M(v_c)$. From Lemma \ref{section} it follows that $h^0(\E(1)) > 0$. Since $\E$ is $\mu$-stable, Lemma \ref{lem:quotienttorsionfree} guarantees that every pair $(\E(1),s)$ is saturated.

On the other hand, by Proposition \ref{verystable}, every $\delta$-stable pair is very stable for $0 < \delta < 1/2$, since $c_1(v_c(1))=1$ is odd and hence every semistable sheaf of class $v_c(1)$ is stable. Therefore, for any $\delta$ in the range $0 < \delta < 1/2$, the $\delta$-stable pairs are both saturated and very stable; hence, the simultaneous existence of the morphisms $\Psi$ and $\Gamma$ follows from Theorem \ref{ModularSerreGieseker} and Theorem \ref{ModularSerreHilbert}.

Finally, the calculation of the degree and genus is done using Theorem \ref{ModularSerreHilbert} noting that $v_c(1)=(1,c,c^2-2)$.
\end{proof}

\subsection{Proof of the Main Theorem, item (b)}\label{sec:proof-b}

Having explicitly computed the degree and the arithmetic genus of the subscheme $Y$ corresponding to a pair $(\E(1),s)$ in $\S^\delta(v_c(1))$ via the morphism $\Gamma$, we can now analyze the geometry of the relevant Hilbert scheme. In particular, for $d = c \geq 4$ and $g = c(c-3)/2 = \binom{c-1}{2} - 1$, obtained in the previous result, we see that $g$ is in the range: 

$$ \binom{d-1}{2}-4 < g < \binom{d-1}{2} \quad \text{and} \quad g > \binom{d-2}{2}. $$

These numerical constraints allow us to use the following irreducibility result, which also describes the general member of the component. 

\begin{theorem}\label{thm:hilbertirreducible}
The Hilbert scheme $\operatorname{Hilb}^{d,g}_{\mathbb{P} ^3}$ is irreducible whenever $(d,g)$ satisfies $d > 3$, $\binom{d-1}{2}-4 < g < \binom{d-1}{2}$, and $g > \binom{d-2}{2}$, with a general member consisting of a plane curve of degree $d$ union $\binom{d-1}{2}-g$ isolated points.
\end{theorem}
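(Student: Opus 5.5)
Let $[C]\in\operatorname{Hilb}^{d,g}$ be arbitrary. The plan is to split $C$ into its Cohen--Macaulay part and a zero-dimensional remainder, and use a genus bound to force the curve part to be planar. This reduces irreducibility to a statement about finite-length quotients of the ideal of a plane curve.

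\textbf{Step 1: the curve part is planar.} Let $C_1\subset C$ be the largest locally Cohen--Macaulay (purely one-dimensional) subscheme. Set $\mathcal{K}\coloneqq\mathcal{I}_{C_1}/\mathcal{I}_C$, which is a zero-dimensional sheaf of some length $k\ge 0$. Comparing Hilbert polynomials gives $\deg C_1=d$ and $p_a(C_1)=g+k\ge g>\binom{d-2}{2}$. I would then invoke the classical genus bound for non-planar locally Cohen--Macaulay curves (Hartshorne; Beorchia for the general case): such a curve of degree $d$ has $p_a\le\binom{d-2}{2}$. Hence $C_1$ lies in a plane, so it is a plane curve of degree $d$ and $p_a(C_1)=\binom{d-1}{2}$. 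Therefore
\[
k=\binom{d-1}{2}-g\in\{1,2,3\},
\]
by the hypothesis $\binom{d-1}{2}-4<g<\binom{d-1}{2}$.

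\textbf{Step 2: parametrize the Hilbert scheme.} By Step 1 every point of $\operatorname{Hilb}^{d,g}$ is a pair $(C_1,\mathcal{I}_C\subset\mathcal{I}_{C_1})$, where $C_1$ is a plane curve of degree $d$ and $\mathcal{I}_{C_1}\twoheadrightarrow\mathcal{K}$ is a length-$k$ quotient. Since $d>3$ forces $h^0(\mathcal{I}_{C_1}(1))=1$, the plane of $C_1$ is determined by $C$. So the map $[C]\mapsto[C_1]$ is a well-defined morphism, at least set-theoretically on reduced structures, from $\operatorname{Hilb}^{d,g}$ onto the smooth irreducible variety $P$ of plane curves of degree $d$. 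Its fibre over $[C_1]$ is $\operatorname{Quot}(\mathcal{I}_{C_1},k)$. The open locus $U$ where $\mathcal{K}\simeq\Oo_Z$ with $Z$ a set of $k$ reduced points off $C_1$ is irreducible of dimension $3+\binom{d+2}{2}-1+3k$. Its general member is a plane curve union $k$ isolated points, as claimed.

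\textbf{Step 3 (main obstacle): every point lies in $\overline U$.} It suffices to show that each fibre $\operatorname{Quot}(\mathcal{I}_{C_1},k)$ is contained in the closure of its open subset of quotients supported at $k$ distinct points off $C_1$, uniformly over $P$. Since $k\le 3$, the quotient is either supported at distinct points or has length $\le 3$ at one point, so I would localise and argue as follows.
\begin{itemize}
\item At a point off $C_1$, the quotient is a length-$\le 3$ subscheme of a smooth threefold. These are smoothable, since $\operatorname{Hilb}^{k}(\mathbb{A}^3)$ is irreducible for $k\le 7$.
\item At a point of $C_1$, locally $\mathcal{I}_{C_1}=(z,f(x,y))$. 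A length-$k$ quotient is then either an embedded point structure or a subscheme of the form $\mathcal{I}_{C_1}\supset\mathcal{I}_{C_1}\cap\mathcal{I}_W$ with $W$ meeting $C_1$. For the second kind I would translate the support of the quotient off the plane in the $z$-direction, e.g.\ using the family $(z-t\lambda,\,\dots)$.
\item For genuine embedded points, I would write down explicit one-parameter families: the ideals $(z^2,zx,zy,f)$-type modifications of $(z,f)$, together with their length-$2$ and length-$3$ analogues (finitely many types up to analytic equivalence at smooth and singular points of $C_1$). In each family an isolated point collides with the plane curve, and the flat limit should realise the given embedded structure.
\end{itemize}
As a fallback for the singular points of $C_1$, I would first deform $C_1$ within $P$ to make the point smooth, carrying the quotient along; this works because $\mathcal{I}_{C_1}$ deforms flatly with $C_1$. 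I would then apply the smooth-point analysis. The fallback needs a dimension count showing that the locus of embedded structures at singular points of $C_1$ has dimension strictly below $\dim U$ and is not itself a component. I expect this bookkeeping for $k=3$ with non-curvilinear embedded structures to be the hardest part of the argument.
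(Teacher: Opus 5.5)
\paragraph{Verdict.} Your outline follows the right strategy, but Step 3 is a genuine gap.

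\paragraph{What is fine.} The paper gives no proof of this statement; it quotes \cite[Theorem 4.2]{Dawei2012detaching}. Your Steps 1 and 2 are the natural reduction behind that result, and the hypothesis $g>\binom{d-2}{2}$ is there exactly for your Step 1. Hartshorne's bound $p_a\le\binom{d-2}{2}$ for non-planar locally Cohen--Macaulay curves forces the pure part $C_1$ to be a plane curve. Hence every point of $\operatorname{Hilb}^{d,g}$ is a length-$k$ quotient of $\mathcal{I}_{C_1}$ with $k=\binom{d-1}{2}-g\in\{1,2,3\}$. (Incidentally, $h^0(\mathcal{I}_{C_1}(1))=1$ already holds for $d\ge 2$.)

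\paragraph{The gap.} Step 3 must show that every such quotient is a flat limit of a plane curve plus $k$ isolated points. That is the entire substance of the cited theorem, namely detaching embedded points of total length at most $3$ on plane curves, and your proposal only announces it.

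\paragraph{Why the proposed devices fail.} None of them works as stated.
\begin{itemize}
\item Translating $W$ off the plane does not give a family in $\operatorname{Hilb}^{d,g}$. For $t\neq0$ the colength of $\mathcal{I}_{C_1}\cap\mathcal{I}_{W_t}$ in $\mathcal{I}_{C_1}$ is the length $\ell(W)$. At $t=0$ it is $\ell(W)-\ell(W\cap C_1)<\ell(W)$. So the flat limit of $C_1\cup W_t$ carries a larger embedded point and is not $C_1\cup W$. Note also that $C_1\cup W$ is itself an embedded-point structure, so your dichotomy is not a genuine case split.
\item The one-parameter families for ``genuine'' embedded points are never written down. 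The reduction to finitely many analytic types is unjustified: length-$2$ and length-$3$ quotients of $(z,f)$ come in positive-dimensional families whose structure depends on the jet of $f$ at the point.
\item In the fallback, flatness of $\mathcal{I}_{C_1}$ over the space $P$ of plane curves only makes the relative Quot scheme proper over $P$. It does not imply that a quotient over a singular curve is a limit of quotients over nearby curves; that lifting property is exactly what has to be proved.
\item A dimension count showing that a bad locus has dimension $<\dim U$ cannot rule out that it is a lower-dimensional irreducible component. There is no general lower bound on the dimension of components of Hilbert schemes of curves with embedded points that would make this automatic.
\end{itemize}

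\paragraph{What is needed.} You need an actual detaching argument, either explicit flat families or detaching criteria such as those established in \cite{Dawei2012detaching}. It must cover every embedded structure of total length at most $3$, at both smooth and singular points of a plane curve.
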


\begin{proof}
See \cite[Theorem 4.2]{Dawei2012detaching}.
\end{proof}

Therefore, as follows from the proof of \cite[Theorem 4.2]{Dawei2012detaching}, every scheme $Y\in\operatorname{Hilb}^{c,c(c-3)/2}$ fits into an exact sequence of the form
\begin{equation} \label{sqc:y}
0 \to \mathcal{I}_Y \to \mathcal{I}_C \to \Oo_p \to 0,
\end{equation}
where $C$ is a planar curve of degree $c$ and $p$ is a point.

\begin{proposition}\label{prop:gammasurj}
Let $\E \in \M(v_c)$. Then $1\leq h^0(\E(1)) \leq 2$, and $h^0(\E(1))=2$ if and only if the corresponding scheme $Y$, defined by $[Y]=\Gamma([\E(1),s])$, is planar.
\end{proposition}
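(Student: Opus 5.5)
Fix $[\E]\in\M(v_c)$ and a nonzero $s\in H^0(\E(1))$. By Theorem \ref{nowall} and Lemma \ref{lem:quotienttorsionfree}, the pair $(\E(1),s)$ is saturated, so we have an exact sequence
\[
0\to\Oo_{\P^3}\xrightarrow{s}\E(1)\to\mathcal{I}_Y(1)\to 0,
\]
with $[Y]=\Gamma([\E(1),s])\in\operatorname{Hilb}^{c,c(c-3)/2}$. Lemma \ref{section} already gives the lower bound $h^0(\E(1))\geq 1$. Taking global sections yields
\[
h^0(\E(1))=1+h^0(\mathcal{I}_Y(1)),
\]
because $H^1(\Oo_{\P^3})=0$. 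The whole statement therefore reduces to one claim about $Y$: we must show $h^0(\mathcal{I}_Y(1))\leq 1$, with equality exactly when $Y$ is planar.

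For this we use the structure of $Y$ given by the sequence \eqref{sqc:y}, namely $0\to\mathcal{I}_Y\to\mathcal{I}_C\to\Oo_p\to 0$ with $C$ a plane curve of degree $c\geq 2$. The case $c=2,3$ needs its own justification (see the end). Let $H$ be the plane spanned by $C$. Then $\mathcal{I}_C=\mathcal{I}_{C\subset H}$ fits in
\[
0\to\Oo_{\P^3}(-1)\to\mathcal{I}_C\to\Oo_H(-c)\to 0 .
\]
Twisting by $1$ gives $h^0(\mathcal{I}_C(1))=1$, spanned by the equation of $H$, since $c\geq 2$. Now $H^0(\mathcal{I}_Y(1))\subset H^0(\mathcal{I}_C(1))$, so $h^0(\mathcal{I}_Y(1))\leq 1$ and hence $h^0(\E(1))\leq 2$. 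Moreover $h^0(\mathcal{I}_Y(1))=1$ if and only if the linear form of $H$ vanishes on $Y$, that is, if and only if $Y\subset H$. This is the definition of $Y$ being planar, so the equivalence follows.

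The main obstacle is the justification of \eqref{sqc:y} for every $Y$ in the Hilbert scheme, not just for the general one. Irreducibility (Theorem \ref{thm:hilbertirreducible}) only describes the general member, so we must check that the degeneration argument of \cite{Dawei2012detaching} covers all points. To make the step robust I would argue directly, without relying on \eqref{sqc:y}. Let $C\subset Y$ be the maximal pure one-dimensional subscheme, so that $\mathcal{I}_Y\subset\mathcal{I}_C$ with zero-dimensional quotient. Then $C$ is a locally Cohen--Macaulay curve of degree $c$ and genus $p_a(C)\geq g=\binom{c-1}{2}-1$. The known bound for non-planar locally CM curves is $p_a\leq\binom{c-2}{2}$, which is smaller than $g$ when $c\geq 4$. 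Hence $C$ is planar with $p_a(C)=\binom{c-1}{2}$, and the quotient $\mathcal{I}_C/\mathcal{I}_Y$ has length one, i.e.\ it is $\Oo_p$.

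For $c=2,3$, the bound for non-planar curves also allows non-planar $C$, for example two skew lines or a twisted cubic. These cases would be handled by the same computation: for such $C$ one has $h^0(\mathcal{I}_C(1))=0$, so $h^0(\E(1))=1$ and $Y$ is non-planar, which is consistent with the statement.
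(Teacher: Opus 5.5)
Your proof is correct and follows the paper's argument: the saturated sequence $0\to\Oo_{\P^3}\to\E(1)\to\mathcal{I}_Y(1)\to0$ and $H^1(\Oo_{\P^3})=0$ give $h^0(\E(1))=1+h^0(\mathcal{I}_Y(1))$, and everything then reduces to showing $h^0(\mathcal{I}_Y(1))\in\{0,1\}$, with value $1$ exactly when $Y$ is planar. The ``main obstacle'' you identify is avoidable, though: two independent linear forms vanishing on $Y$ would put $Y$ inside a line, which is impossible since $Y$ has degree $c\geq 2$, so $h^0(\mathcal{I}_Y(1))\leq 1$ holds for every $Y$ and every $c\geq 2$ with no appeal to \eqref{sqc:y}, Theorem~\ref{thm:hilbertirreducible}, or genus bounds for non-planar curves.
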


\begin{proof}
The fact that $1\leq h^0(\E(1))$ follows already from Lemma \ref{section}; we prove the second inequality. Since $h^0(\E(1)) \geq 1$, there exists a non-zero section $ s \in H^0(\E(1))$, which gives the following short exact sequence 

$$0 \longrightarrow \Oo_{\mathbb{P}^3} \stackrel{s}{\longrightarrow} \E (1) \longrightarrow \mathcal{I}_Y(1) \longrightarrow 0.$$

Since $h^1(\Oo_{\P ^3})=0$, we have $$h^0(\E(1)) = h^0(\Oo_{\P ^3}) + h^0(\mathcal{I}_Y(1))=1+h^0(\mathcal{I}_Y(1)).$$
Since 
$$h^0(\mathcal{I}_Y(1)) = \left \{ \begin{matrix} 1, & \mbox{if } Y \mbox{ is planar} \\ 0, & \mbox{otherwise } \end{matrix} \right.$$
we obtain that $h^0(\E(1))\le2$, with equality holding if and only if $Y$ is planar.
\end{proof}

The next step is to show that $\Ext^1(\mathcal{I}_Y, \mathcal{O}_{\mathbb{P}^3}(-1)) \neq 0$ and to compute its dimension, which allows us to check that the morphism $\Gamma \colon \mathcal{S}^{\delta}(v_c(1))\to\operatorname{Hilb}^{c,c(c-3)/2}$ is surjective.

\begin{proposition} \label{surjectivemap}
For every scheme $Y$ satisfying an exact sequence as in \eqref{sqc:y} where $C$ is a planar curve of degree $c\ge1$, we have
$$ \operatorname{ext}^1(\mathcal{I}_Y, \Oo_{\P ^3}(-1) ) = \dfrac{1}{2}(c^2+3c).  $$
\end{proposition}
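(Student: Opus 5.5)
The plan is to reduce the computation to a cohomology group of the planar curve $C$, using Serre duality and then the sequence \eqref{sqc:y} to remove the point $p$. Serre duality on $\P^3$, with $\omega_{\P^3}=\Oo_{\P^3}(-4)$, gives
\[
\Ext^1(\mathcal{I}_Y,\Oo_{\P^3}(-1))\simeq H^2(\mathcal{I}_Y(-3))^{*},
\]
so it suffices to show $h^2(\mathcal{I}_Y(-3))=\tfrac12(c^2+3c)$.

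First I would twist \eqref{sqc:y} by $\Oo_{\P^3}(-3)$ and take cohomology. Since $\Oo_p$ is supported at a point, $H^1(\Oo_p)=H^2(\Oo_p)=0$. The long exact sequence therefore gives an isomorphism $H^2(\mathcal{I}_Y(-3))\simeq H^2(\mathcal{I}_C(-3))$, so the isolated (or embedded) point contributes nothing.

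Next I would compute $h^2(\mathcal{I}_C(-3))$ for a plane curve $C$ of degree $c$. Let $C=H\cap\{f=0\}$ with $H$ a plane and $\deg f=c$. Then $\mathcal{I}_C$ is a complete intersection ideal, with Koszul resolution
\[
0\to\Oo_{\P^3}(-c-1)\to\Oo_{\P^3}(-1)\oplus\Oo_{\P^3}(-c)\to\mathcal{I}_C\to0 .
\]
Twist by $-3$. The middle term has vanishing $H^2$, so
\[
H^2(\mathcal{I}_C(-3))=\ker\bigl(H^3(\Oo_{\P^3}(-c-4))\to H^3(\Oo_{\P^3}(-4))\oplus H^3(\Oo_{\P^3}(-c-3))\bigr).
\]
This map is surjective, because its cokernel is $H^3(\mathcal{I}_C(-3))\simeq H^2(\Oo_C(-3))=0$, as $C$ is one-dimensional. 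By duality the three dimensions are $\binom{c+3}{3}$, $1$ and $\binom{c+2}{3}$. Hence
\[
h^2(\mathcal{I}_C(-3))=\binom{c+3}{3}-\binom{c+2}{3}-1=\binom{c+2}{2}-1=\frac{c^2+3c}{2}.
\]
As a cross-check, $h^2(\mathcal{I}_C(-3))=h^1(\Oo_C(-3))$, and $\omega_C=\Oo_C(c-3)$ gives $h^1(\Oo_C(-3))=h^0(\Oo_C(c))=\binom{c+2}{2}-1$.

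The only point needing care is surjectivity in the last step, and I expect it to be the main (mild) obstacle. It amounts to the vanishing $H^3(\mathcal{I}_C(-3))=0$, which holds because $C$ is one-dimensional. The argument is uniform in $c\geq1$ and does not depend on whether $p$ lies on $C$.
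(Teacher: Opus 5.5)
Your proof is correct and is essentially the paper's argument written in Serre-dual form. The paper applies $\Hom(\cdot,\Oo_{\P^3}(-1))$ directly to \eqref{sqc:y} and to the Koszul resolution of $\mathcal{I}_C$, obtaining $0\to H^0(\Oo_{\P^3})\oplus H^0(\Oo_{\P^3}(c-1))\to H^0(\Oo_{\P^3}(c))\to\Ext^1(\mathcal{I}_C,\Oo_{\P^3}(-1))\to 0$; this is exactly the dual of your $H^2$/$H^3$ sequence, and your surjectivity check via $H^3(\mathcal{I}_C(-3))=0$ corresponds to the paper's use of $\Hom(\mathcal{I}_C,\Oo_{\P^3}(-1))=0$.
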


In particular, Proposition \ref{surjectivemap} applies to every $Y\in\operatorname{Hilb}^{c,c(c-3)/2}$ when $c\ge4$.

\begin{proof}
Apply the functor $\operatorname{Hom}(\cdot,\Oo_{\P^3}(-1))$ to the sequence in display \eqref{sqc:y} to see that 
$$\Ext^1(\mathcal{I}_Y, \Oo_{\P^3}(-1)) \simeq \Ext^1(\mathcal{I}_C,\Oo_{\P ^3}(-1)) $$
since $\Ext^i(\Oo_p,\Oo_{\P^3}(-1))=H^{3-i}(\Oo_p)^*=0$ for $i=1,2$.

Since $C$ is a planar curve of degree $c$, the ideal sheaf $\mathcal{I}_C$ admits the following resolution:
$$ 0 \longrightarrow \mathcal{O}_{\mathbb{P}^3}(-1-c) \longrightarrow \mathcal{O}_{\mathbb{P}^3}(-1) \oplus \mathcal{O}_{\mathbb{P}^3}(-c) \longrightarrow \mathcal{I}_C \longrightarrow 0. $$
Apply the functor $\operatorname{Hom}(\cdot,\Oo_{\P^3}(-1))$ to this exact sequence, we have
$$ 0 \to H^0(\Oo_{\P^3})\oplus H^0(\Oo_{\P^3}(c-1)) \to H^0(\Oo_{\P^3}(c)) \to \Ext^1(\mathcal{I}_C,\Oo_{\P ^3}(-1)) \to 0 $$
since
$$ \operatorname{Hom}(\mathcal{I}_C,\Oo_{\P^3}(-1))=\operatorname{Ext}^1(\Oo_{\P^3}(-1),\Oo_{\P^3}(-1))=\operatorname{Ext}^1(\Oo_{\P^3}(-c),\Oo_{\P^3}(-1))=0. $$
It follows that 
$$ \operatorname{ext}^1(\mathcal{I}_Y,\Oo_{\P ^3}(-1)) = \binom{c+3}{3}-\binom{c+2}{3}-1 = \dfrac{1}{2}(c^2+3c), $$
as desired.
\end{proof}

Since the fibers of $\Gamma$ have constant dimension, the main geometric properties of these moduli space follow. 

\begin{proposition}\label{prop:irreducible}
For $c \geq 4$ and $0 < \delta < \frac{1}{2}$, the moduli space of pairs $\mathcal{S}^{\delta}(v_c(1))$ and the Gieseker moduli space $\mathcal{M}(v_c)$ are irreducible of dimension $c^2+3c+5$.
\end{proposition}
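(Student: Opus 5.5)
The plan is to pass irreducibility from the Hilbert scheme up to the space of pairs through $\Gamma$, and then down to $\M(v_c)$ through $\Psi$. By Theorem~\ref{nowall}, both morphisms are defined for $0<\delta<\frac12$.

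First I would describe $\Gamma$. By Theorem~\ref{ModularSerreHilbert}, the fibre over $[Y]$ is $\P\Ext^1(\mathcal{I}_Y,\Oo_{\P^3}(-1))$. By Proposition~\ref{surjectivemap}, which applies to every $Y\in\operatorname{Hilb}^{c,c(c-3)/2}$ when $c\ge4$, this is a projective space of constant dimension $\frac12(c^2+3c)-1$. In particular every fibre is nonempty, so $\Gamma$ is surjective.

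To make sure each extension class really gives a point of $\mathcal{S}^\delta(v_c(1))$, I would argue as follows. The surjectivity of $\Gamma$ onto the Hilbert scheme is part of \cite[Main Theorem 1]{jardim2025modular}: the fibre is identified with the full projective space of extensions. If one prefers to check it directly, it suffices to show that the middle term $\E(1)$ of a nonsplit extension is torsion-free and stable. Since $c_1=1$ is odd, stability reduces to showing that $h^0(\E)=0$ and that there is no rank one subsheaf of degree $\ge1$. Both follow from $h^0(\mathcal{I}_Y)=0$ and from nonsplitness.

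Next, $\Gamma$ is proper by Lemma~\ref{CorGammaProper}, and Theorem~\ref{thm:hilbertirreducible} says $\operatorname{Hilb}^{c,c(c-3)/2}$ is irreducible. Its dimension comes from the general member, a plane curve of degree $c$ together with one point: $3+\binom{c+2}{2}-1+3=\frac{c^2+3c}{2}+6$. Since $\Gamma$ is a proper surjection onto an irreducible base, with all fibres irreducible of the same dimension, the source is irreducible. For this I would use that every fibre is a projective space, so $\Gamma$ is a projective-bundle-like map; the standard criterion applies, namely that every component dominates because fibre dimension is constant. This gives
\[
\dim\mathcal{S}^\delta(v_c(1))=\tfrac{c^2+3c}{2}+6+\tfrac{c^2+3c}{2}-1=c^2+3c+5.
\]

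Then I would pass to $\M(v_c)$. The map $\Psi$ is surjective, because Lemma~\ref{section} gives $h^0(\E(1))\ge1$ for every $[\E]\in\M(v_c)$. So $\M(v_c)$ is the image of an irreducible scheme and is therefore irreducible. By Proposition~\ref{prop:gammasurj}, the fibres of $\Psi$ are $\P^0$ or $\P^1$, the latter exactly when $Y$ is planar. Planar $Y$ form a proper closed subset of the Hilbert scheme, since the general member is a plane curve plus a point off the plane. So the generic fibre of $\Psi$ is a point, and $\dim\M(v_c)=c^2+3c+5$.

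The main obstacle is the irreducibility of the source from fibre data alone, because $\mathcal{S}^\delta$ could a priori have a component living over a proper closed subset. I would rule this out by a dimension count. Each component has dimension at least the expected one, computed from $\Ext$ groups of pairs, and that equals $c^2+3c+5$. Any component lying over a proper closed subset would have strictly smaller dimension, since the fibres have constant dimension. As a fallback, I would compare with the T-component $\operatorname{T}(-1,c,c^2,1)$, which has dimension $c^2+3c+5$ and is a component by Theorem~\ref{0dcomp}; this forces equality with $\M(v_c)$.
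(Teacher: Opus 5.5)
Your route is the paper's. The projective-space fibres of $\Gamma$, of constant dimension $\frac12(c^2+3c)-1$ over the irreducible Hilbert scheme, give irreducibility and dimension $c^2+3c+5$ for $\mathcal{S}^{\delta}(v_c(1))$. The surjection $\Psi$, bijective over the open locus where $h^0(\E(1))=1$, then transfers both to $\M(v_c)$. Your dimension count and the surjectivity of $\Gamma$ are fine.

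The weak point is the one you flag yourself. The criterion in your fourth paragraph is a correct standard theorem and is exactly what the paper invokes: a proper surjection onto an irreducible base, with all fibres irreducible of one dimension, has irreducible source. Your stated reason for it is wrong, though. Constant fibre dimension does not make every component dominate, since it does not prevent an extra component over a proper closed subset.

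The remedy in your last paragraph also fails, because the expected dimension is not $c^2+3c+5$. For stable sheaves with $c_1=-1$ one has $\operatorname{ext}^1-\operatorname{ext}^2=8c-5$, which is the dimension of $\R(v_2)$ and $\R(v_3)$. Accounting for sections adds $\chi(\E(1))-1$, giving $\frac12(c^2+11c-4)$ for pairs, e.g.\ $28$ instead of $33$ when $c=4$. Such a bound cannot exclude a component of dimension $c^2+3c+4$ over a closed subset. Comparing with $\operatorname{T}(-1,c,c^2,1)$ is no fallback either: it identifies $\M(v_c)$ with the T-component only once irreducibility is known, which is how the paper uses it in Theorem~\ref{thm:item-b}.

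What actually makes the criterion work is irreducibility of the fibres plus properness. Let $X_1,\dots,X_r$ be the irreducible components of $\mathcal{S}^{\delta}(v_c(1))$ and $N=\frac12(c^2+3c)-1$. Each fibre $\Gamma^{-1}([Y])\simeq\P^N$ is irreducible, so it lies in some $X_i$. Moreover,
\[
\{[Y] : \Gamma^{-1}([Y])\subseteq X_i\}=\{[Y] : \dim\bigl(X_i\cap\Gamma^{-1}([Y])\bigr)\geq N\}
\]
is closed, by upper semicontinuity of fibre dimension for $\Gamma|_{X_i}$ and closedness of $\Gamma|_{X_i}$. These finitely many closed sets cover the irreducible $\operatorname{Hilb}^{c,c(c-3)/2}$, so one of them is everything. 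That $X_i$ then contains every fibre and equals $\mathcal{S}^{\delta}(v_c(1))$. With this replacing your last paragraph, the proof is complete.
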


\begin{proof}
Proposition \ref{surjectivemap} not only guarantees that $\Gamma$ is surjective (a nonzero class in $\Ext^1(\mathcal{I}_Y,\Oo_{\P^3}(-1))$ gives an extension $0\to\Oo_{\P^3}\to\E(1)\to\mathcal{I}_Y(1)\to0$ whose middle term is torsion-free, being an extension of torsion-free sheaves, and stable, since $h^0(\E)=h^0(\mathcal{I}_Y)=0$; the corresponding pair is then $\delta$-stable by Proposition~\ref{prop:stable-sheaf-stable-pair}), but also shows that its fibers 
$\mathbb{P}\Ext^1(\mathcal{I}_Y,\Oo_{\P ^3}(-1))$ have constant dimension.
Since $\operatorname{Hilb}^{c,c(c-3)/2}$ is irreducible by Theorem \ref{thm:hilbertirreducible}, we conclude that $\mathcal{S}^{\delta}(v_c(1))$ is also irreducible. In addition, since a general member of $\operatorname{Hilb}^{c,c(c-3)/2}$ is determined by a plane, a curve of degree $c$ in it and a point, so that $\dim\operatorname{Hilb}^{c,c(c-3)/2}=\binom{c+2}{2}+5$:
$$ \dim \mathcal{S}^{\delta}(v_c(1)) = \dim \operatorname{Hilb}^{c,c(c-3)/2} + \operatorname{ext}^1(\mathcal{I}_Y,\Oo_{\P ^3}(-1)) - 1 $$
$$ = 5 + \binom{c+2}{2}+\dfrac{1}{2}(c^2+3c)-1=5+c^2+3c. $$

Now we use the morphism $\Psi \colon \mathcal{S}^{\delta}(v_c(1))\to\mathcal{M}(v_c)$, which is surjective by Proposition \ref{prop:gammasurj}. Being the image of an irreducible scheme under a morphism, the Gieseker moduli space $\mathcal{M}(v_c)$ is therefore irreducible. In addition, consider the subset
$$ \M^\circ(v_c) \coloneqq \big\{[\E]\in\M(v_c) ~|~ h^0(\E(1))=1 \big\} ; $$
Proposition \ref{prop:gammasurj} implies that $\M^\circ(v_c)$ is an open subset of $\M(v_c)$ and that $\Psi^{-1}(\E)$ is a single point for every $\E\in\M^\circ(v_c)$. Therefore, $\Psi^{-1}(\M^\circ(v_c))$ is an open subset of $\S^\delta(v_c(1))$ bijective with $\M^\circ(v_c)$, hence
$$ \dim \M(v_c) = \dim \M^\circ(v_c) = \dim \Psi^{-1}(\M^\circ(v_c)) = \dim \S^\delta(v_c(1)).$$
\end{proof}

As a by-product of our arguments, we note that the \textit{Brill-Noether locus}
$$ \M'(v_c) \coloneqq \M(v_c)\setminus\M^\circ(v_c)=\big\{[\E]\in\M(v_c)~|~ h^0(\E(1))=2 \big\} $$
is an irreducible closed subscheme of codimension 2 in $\M(v_c)$.

Indeed, let $\operatorname{Hilb}_{\rm pl}^{c,c(c-3)/2}$ be the closed subset of $\operatorname{Hilb}^{c,c(c-3)/2}$ consisting of planar schemes. Proposition \ref{prop:gammasurj} implies that 
$$ \Gamma^{-1}(\operatorname{Hilb}_{\rm pl}^{c,c(c-3)/2})=\Psi^{-1}(\M'(v_c)). $$
Let $\S^\delta_{\rm pl}(v_c(1))$ denote this set. The restrictions of $\Gamma$ and $\Psi$ to $\S^\delta_{\rm pl}(v_c(1))$, denoted by $\Gamma'$ and $\Psi'$, respectively, give the following surjective maps:
\begin{center}
 \begin{tikzcd}[row sep=large, column sep=large]
    & \mathcal{S}^{\delta}_{\rm pl}(v_c(1)) \arrow[dl, "\Psi'"'] \arrow[dr, "\Gamma'"] & \\
    \mathcal{M}'(v_c) & & \operatorname{Hilb}^{c,c(c-3)/2}_{\rm pl}
\end{tikzcd} \end{center}
Since $\operatorname{Hilb}^{c,c(c-3)/2}_{\rm pl}$ is irreducible, of dimension $\binom{c+2}{2}+4$, and the fibers of $\Gamma'$ have constant dimension equal to $\operatorname{ext}^1(\mathcal{I}_Y,\Oo_{\P ^3}(-1))-1$, we get that $\mathcal{S}^{\delta}_{\rm pl}(v_c(1))$ is irreducible and 
$$ \dim \mathcal{S}^{\delta}_{\rm pl}(v_c(1)) = \dim \operatorname{Hilb}^{c,c(c-3)/2}_{\rm pl} + \operatorname{ext}^1(\mathcal{I}_Y,\Oo_{\P ^3}(-1))-1 = c^2+3c+4. $$
It then follows that $\mathcal{M}'(v_c)$ is irreducible; to compute its dimension, note that the fibers of $\Psi'$ have constant dimension equal to 1, thus
$$ \dim\mathcal{M}'(v_c) =  \dim\mathcal{S}^{\delta}_{\rm pl}(v_c(1)) - 1 = c^2+3c+3. $$

We complete this section by summarizing the global properties of $\M(v_c)$ in the following statement, which establishes part (b) of the Main Theorem.

\begin{theorem}\label{thm:item-b}
If $c \geq 4$, then $\M(v_c)$ is irreducible, generically smooth, rational, and $\dim\M(v_c)=c^2+3c+5$.
\end{theorem}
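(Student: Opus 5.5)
Irreducibility and the dimension $c^2+3c+5$ are already given by Proposition~\ref{prop:irreducible}. What remains is generic smoothness and rationality, and for both I would identify $\M(v_c)$ with the T-component. By Theorem~\ref{0dcomp}, $\operatorname{T}(-1,c,c^2,1)$ is an irreducible component of $\M(-1,c,c^2-2)=\M(v_c)$. It is generically reduced, rational, and of dimension $\dim\R(-1,c,c^2)+4=c^2+3c+1+4=c^2+3c+5$ by Theorem~\ref{maximalreflexive}(a). Since $\M(v_c)$ is irreducible of the same dimension, it coincides set-theoretically with $\operatorname{T}(-1,c,c^2,1)$. Rationality is a birational property of the underlying variety, so it transfers directly.

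For generic smoothness, generic reducedness of $\operatorname{T}(-1,c,c^2,1)$ gives smoothness of $\M(v_c)$ at a general point as a reduced scheme. To state this at the level of the tangent space, I would check directly that $\operatorname{ext}^1(\E,\E)=c^2+3c+5$ for a general $\E$. Take $\E=\ker(\F\to\Oo_p)$, where $\F\in\R(-1,c,c^2)$ is general and $p$ is a general point; this is the description of the generic point in Theorem~\ref{0dcomp}. There are two ways to do the count. The first is to quote the tangent computation from \cite[Theorem 13]{almeida2022irreducible}, which is how generic reducedness is proved there. The second is an independent argument through the pair space. Over $\M^\circ(v_c)$ the map $\Psi$ is bijective, and $\S^\delta(v_c(1))$ fibres over the Hilbert scheme with projective-space fibres of constant dimension (Proposition~\ref{surjectivemap}). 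Moreover $\operatorname{Hilb}^{c,c(c-3)/2}$ is generically smooth, because a general member, a plane curve union a point, is a smooth point of a component of the expected dimension. It would follow that $\S^\delta(v_c(1))$ is generically smooth, and one then transfers this through $\Psi$.

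The main obstacle is that last transfer. One must know that $\Psi$ restricted to $\Psi^{-1}(\M^\circ(v_c))$ is an isomorphism onto $\M^\circ(v_c)$, or at least étale, and not merely a bijection. I would try first to show this from the deformation theory of pairs. Since $h^0(\E(1))=1$ on $\M^\circ(v_c)$, a section of $\E(1)$ extends uniquely along first-order deformations of $\E$ provided $H^1(\E(1))$ does not obstruct it. Failing that, I would simply rely on the generic reducedness statement of Theorem~\ref{0dcomp}, which already suffices for ``generically smooth'' in the sense used in the Main Theorem.
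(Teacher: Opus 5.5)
Your proposal is correct and is essentially the paper's proof. Irreducibility comes from Proposition~\ref{prop:irreducible}, and then $\M(v_c)=\operatorname{T}(-1,c,c^2,1)$ because an irreducible component of an irreducible space is the whole space (Theorem~\ref{0dcomp}); rationality and generic smoothness are quoted from \cite[Theorem 13(i)]{almeida2022irreducible}. The pair-space detour is not needed: over $\mathbb{C}$, generic reducedness of the unique component already makes $\M(v_c)$ itself smooth on a dense open subset.
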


\begin{proof}
    If $c \geq 4$ then $\M(v_c)$ is irreducible by Proposition \ref{prop:irreducible}. Consequently, from Theorem \ref{0dcomp} it follows that $\M(v_c) = \operatorname{T}(-1,c,c^2,1)$, and therefore its dimension is $c^2+3c+5$. Finally, rationality and generic smoothness follow from \cite[Theorem 13(i)]{almeida2022irreducible}.
    
\end{proof}

It would be interesting to know whether $\M(v_c)$ is integral, and whether it is smooth.

\section{Recovering \texorpdfstring{$\M(v_2)$}{M2}}\label{sec:recover-2-2}

The space $\mathcal{M}(v_2)$ was studied in \cite{almeida2022irreducible}, where it is proved that it is connected, consisting of two irreducible components, namely, the closure of the moduli space of the reflexive sheaves, described by Chang \cite{chang1984small}, plus an irreducible component whose generic element is a proper torsion-free sheaf, described in Theorem \ref{0dcomp}. In this section we will use the modular Serre correspondence given in Theorems \ref{ModularSerreGieseker} and \ref{ModularSerreHilbert} to recover this result. The correspondence realizes $\mathcal{M}(v_2)$ and $\operatorname{Hilb}^{2,-1}$ as the two images of a common moduli space of pairs $\mathcal{S}^{\delta}(v_2(1))$, whose fibers are projective spaces of global sections and of extension classes, respectively. We use these morphisms and the dimensions of their fibers to establish the connectedness and the irreducible-component description of $\M(v_2)$ below.

\begin{theorem}\label{M2}
The moduli space $\mathcal{M}(v_2)$ is connected and has exactly $2$ generically smooth, rational  irreducible components, namely:
\begin{enumerate}
\item[(a)] the closure $\overline{\mathcal{R}(v_2)}$ of the family of reflexive sheaves $\R(v_2)$ of dimension $11$; 

\item[(b)] the irreducible component $\operatorname{T}(-1,2,4,1)$ given by Theorem \ref{0dcomp}, of dimension 15, whose generic element is a torsion-free sheaf $\E$ such that $\E ^{\vee \vee} \in \R (-1,2,4)$ and $\E ^{\vee \vee}/\E=\Oo_p$ for some $p\in\P^3$. 
\end{enumerate}
\end{theorem}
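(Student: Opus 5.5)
I will run the same modular Serre correspondence as in Section~\ref{sec:quasi-max}, now with $c=2$. For $\E\in\M(v_2)$, Lemma~\ref{section} gives $h^2(\E(1))=0$ and $h^0(\E(1))\geq (2-4+4)/2=1$. Theorem~\ref{nowall} then applies and provides the diagram $\M(v_2)\xleftarrow{\Psi}\S^\delta(v_2(1))\xrightarrow{\Gamma}\operatorname{Hilb}^{2,-1}$. Here $\Psi$ is surjective and $\Gamma$ is proper (Lemma~\ref{CorGammaProper}).

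The first task is to describe $\operatorname{Hilb}^{2,-1}$. Its schemes have degree $2$ and genus $-1$. I expect two irreducible components:
\begin{itemize}
\item $H_1$, whose general member is a pair of disjoint lines, of dimension $8$;
\item $H_2$, whose general member is a plane conic union an isolated point, of dimension $8+3=11$.
\end{itemize}
Their intersection is nonempty. It contains, for instance, a plane conic with an embedded point, or a pair of meeting lines with an embedded point sticking out of the plane; both arise as flat limits of disjoint lines. I would quote the known classification of $\operatorname{Hilb}^{2,-1}$ (e.g.\ Chen--Nollet or Nollet's work on degree $2$ curves) rather than reprove it.

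Over each component I compute $\operatorname{ext}^1(\mathcal I_Y,\Oo_{\P^3}(-1))$.
\begin{itemize}
\item On $H_2$, Proposition~\ref{surjectivemap} gives $5$, so $\Gamma^{-1}(H_2)$ has dimension $11+4=15$.
\item On $H_1$, take $Y=L_1\sqcup L_2$, so $\mathcal I_Y$ has the resolution $0\to\Oo(-4)\to\Oo(-3)^4\to\Oo(-2)^4\to\mathcal I_Y\to0$. Equivalently, $\Ext^1(\mathcal I_Y,\Oo(-1))\simeq\Ext^2(\Oo_Y,\Oo(-1))\simeq H^0(\omega_Y(3))=H^0(\Oo_{L_1}(1))\oplus H^0(\Oo_{L_2}(1))$, which is $4$-dimensional. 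This gives a stratum of dimension $8+3=11$.
\end{itemize}
The general extension over $H_1$ has $\E(1)$ reflexive: the class is nonvanishing at each point of $Y$, so its singular set consists of the points where the section of $\omega_Y(3)$ vanishes, and $c_3=2$. Since $h^0(\E(1))=1$ generically there, $\Psi$ is birational onto its image, and the closure of that image is $\overline{\R(v_2)}$, of dimension $11$. Over $H_2$ the sheaf is non-reflexive generically, and its image is $\operatorname{T}(-1,2,4,1)$, of dimension $15$ by Theorem~\ref{0dcomp}.

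Next, the component count and connectedness. Since $\S^\delta(v_2(1))=\Gamma^{-1}(H_1)\cup\Gamma^{-1}(H_2)$, every component of $\M(v_2)$ is the closure of the $\Psi$-image of one of these pieces. Hence there are at most two components.
\begin{itemize}
\item \textbf{Neither contains the other.} By dimension, $\operatorname{T}(-1,2,4,1)$ cannot lie in the $11$-dimensional locus. Conversely, $\overline{\R(v_2)}$ cannot lie in $\operatorname{T}(-1,2,4,1)$, because the general point of $\operatorname{T}(-1,2,4,1)$ is non-reflexive and reflexivity is open.
\item \textbf{Connectedness.} The Hilbert scheme is connected, the fibers of $\Gamma$ are projective spaces, and $\Gamma$ is proper, so $\S^\delta(v_2(1))$ is connected. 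Its image under $\Psi$ is then connected.
\item \textbf{Nonempty intersection.} To exhibit an explicit common point, I take $Y\in H_1\cap H_2$ and use upper semicontinuity of $\operatorname{ext}^1$. Extension classes over $Y$ lie in the closure of both $\Gamma^{-1}(H_1)$ and $\Gamma^{-1}(H_2)$, because each irreducible piece of $\S^\delta$ maps onto a closed set and the fibers over $Y$ meet both closures.
\end{itemize}

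Generic smoothness and rationality come from the literature. For $\operatorname{T}(-1,2,4,1)$ they are Theorem~\ref{0dcomp} and \cite[Theorem 13]{almeida2022irreducible}. For $\overline{\R(v_2)}$ they follow from Chang's description \cite{chang1984small}. Rationality can also be seen directly: the closure is birational to a $\P^3$-bundle over the open set of $H_1$, and pairs of lines form a rational variety.

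\textbf{Main obstacle.} The delicate step is controlling the fibers of $\Gamma$ over $H_1\cap H_2$ and over degenerate members of $H_1$ (double lines of genus $-1$, lines meeting with an embedded point). There $\operatorname{ext}^1$ may jump, and a priori $\Gamma^{-1}$ of such a locus could form an extra component of $\S^\delta(v_2(1))$. I would handle this by a dimension estimate. For each degenerate stratum $Z\subset\operatorname{Hilb}^{2,-1}$, I would compute $\operatorname{ext}^1(\mathcal I_Y,\Oo(-1))$ via $H^0(\lext^1(\mathcal I_Y,\Oo(-1)))$ and the local-to-global spectral sequence, and check that $\dim Z+\operatorname{ext}^1-1<11$. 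It would also suffice to show that every pair over $Z$ is a limit of pairs over the general strata, since any component of $\S^\delta$ has dimension at least the expected one. This case analysis of the degenerate genus $-1$ double structures is where I expect most of the work to lie.
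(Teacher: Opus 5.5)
\textbf{Gap: the decomposition of $\mathcal{S}^{\delta}(v_2(1))$ into two irreducible pieces.} Your architecture is the paper's: the same roof $\M(v_2)\leftarrow\mathcal{S}^{\delta}(v_2(1))\rightarrow\operatorname{Hilb}^{2,-1}$, the fiber counts $5$ and $4$, injectivity of $\Psi$ where $h^0(\E(1))=1$, and connectedness via $\Gamma$. But the step you defer as the ``main obstacle'' is exactly what makes the component count work, and as written it is missing. Your sentence ``hence there are at most two components'' presupposes that the two preimages $\Gamma^{-1}(H_1)$ and $\Gamma^{-1}(H_2)$ are irreducible, which you have not shown.

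Moreover, the remedy you propose fails on the most important stratum. Take $Z=H_1\cap H_2$. It is $7$-dimensional: it contains the flat limits of skew lines that become a plane pair of lines with the spatial embedded point at the node, e.g.\ $(x,y)\cap(x-tw,z)\to(x^2,xy,xz,yz)$. Over $Z$ one has $\operatorname{ext}^1(\mathcal{I}_Y,\Oo_{\P^3}(-1))=5$, so $\dim Z+\operatorname{ext}^1-1=11$, not $<11$. No dimension count can therefore exclude $\Gamma^{-1}(Z)$ as an extra $11$-dimensional component. This holds even granting the lower bound on the dimension of components of $\mathcal{S}^{\delta}$, which you also have not justified.

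\textbf{The missing observation.} The classification you intend to quote already removes all case analysis. Every $Y$ in the conic-plus-point component $H_2$, including those in $H_1\cap H_2$, satisfies \eqref{sqc:y} with $C$ a (possibly singular or nonreduced) plane conic. Proposition~\ref{surjectivemap} needs nothing more, since the point may be embedded. So $\operatorname{ext}^1=5$ on all of $H_2$. Then $\Gamma^{-1}(H_2)$ is the preimage of an irreducible variety under a proper surjective map whose fibers are all $\P^4$, hence irreducible of dimension $15$. It absorbs the jump locus over $H_1\cap H_2$, so those fibers create no new component.

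On the other side, every $Y\in H_1\setminus H_2$ is either two skew lines or a double line of genus $-1$. Both fit into $0\to\Oo_{L_1}\to\Oo_Y\to\Oo_{L_2}\to0$. Hence $\Ext^1(\mathcal{I}_Y,\Oo_{\P^3}(-1))\simeq H^2(\mathcal{I}_Y(-3))^*\simeq H^1(\Oo_Y(-3))^*$ is $4$-dimensional uniformly; the genus $-1$ double lines you worried about do not jump. Therefore
\[
\mathcal{S}^{\delta}(v_2(1))=\Gamma^{-1}(H_2)\cup\overline{\Gamma^{-1}(H_1\setminus H_2)},
\]
with both pieces irreducible, of dimensions $15$ and $11$. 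From there your argument (birationality of $\Psi$ on each piece, non-containment, identification with $\operatorname{T}(-1,2,4,1)$ and $\overline{\R(v_2)}$, connectedness) goes through as in the paper.
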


Theorem \ref{nowall} tells us that the morphisms $\Psi$ and $\Gamma$ are simultaneously well-defined for $0<\delta<\frac{1}{2}$. From now on, we will fix a $\delta$ in that range. By Lemma \ref{section} we can conclude that every $\E\in\M(v_2)$ admits a non-zero section $s\in H^0(\E(1))$. Therefore, we can consider the pair $(\E(1),s)$, which is $\delta$-stable by Proposition~\ref{prop:stable-sheaf-stable-pair}, hence very stable. This proves that the morphism $\Psi$ is surjective.

Theorem \ref{ModularSerreHilbert} says that the codomain of the morphism $\Gamma$ is $\operatorname{Hilb}^{2,-1}$. This Hilbert scheme was sketched in \cite{Harris1982Curves} and further elaborated in \cite{lee2000hilbert}, and \cite[Theorem 1.1]{chen2011hilbert} proved that both components are smooth; see also \cite[Section 2.1]{soulimani2024bridgeland} for a detailed description and convenient summary of all relevant references.

\begin{theorem}\label{hilb2}
The scheme $\operatorname{Hilb}^{2,-1}$ is the union of two nonsingular rational varieties $C'$ and $S$, of dimensions $11$ and $8$. In addition:
\begin{itemize}
\item[(a)] every $Y\in C'$ satisfies an exact sequence as in display \eqref{sqc:y}, where $C$ is a (possibly singular or non-reduced) conic;
\item[(b)] if $Y\in S\setminus(S\cap C')$, then $Y$ is either a union of skew lines or a double line of genus $-1$.
\end{itemize}
\end{theorem}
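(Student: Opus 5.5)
The plan is to classify points of $\operatorname{Hilb}^{2,-1}$ through their maximal Cohen--Macaulay (purely one-dimensional) subcurve. For the smoothness and rationality statements we would lean on \cite{lee2000hilbert} and \cite[Theorem 1.1]{chen2011hilbert}.

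Let $Y\in\operatorname{Hilb}^{2,-1}$, with Hilbert polynomial $2t+2$. Let $C\subset Y$ be the largest locally Cohen--Macaulay subcurve. Then $\deg C=2$, and there is an exact sequence
\[
0\to\mathcal{I}_Y\to\mathcal{I}_C\to\mathcal{T}\to0
\]
with $\mathcal{T}$ zero-dimensional of length $g(C)+1$. Degree $2$ CM curves are classified as follows:
\begin{itemize}
\item plane conics, possibly singular or non-reduced, of genus $0$;
\item pairs of skew lines, of genus $-1$;
\item non-planar double structures on a line $L$, given by $\mathcal{I}_L\onto\Oo_L(a)$ with $a\geq0$. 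Such a structure has genus $-1-a$, and the planar double line corresponds to $a=-1$, of genus $0$.
\end{itemize}
Since the length of $\mathcal{T}$ is nonnegative, the genus $-1-a\leq-2$ cases are excluded. Hence either $C$ is a conic and $\mathcal{T}\simeq\Oo_p$ is a single (possibly embedded) point, which is exactly the sequence \eqref{sqc:y}, or $Y=C$ is itself a pair of skew lines or a double line of genus $-1$. This dichotomy gives items (a) and (b) once the two loci are identified with $C'$ and $S$.

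Next I define $C'$ as the locus of schemes of the first type and $S$ as the closure of the locus of pairs of skew lines. The locus $C'$ is the image of the flag-type family $\{(H,C,p): C\subset H \text{ a conic}\}\times\P^3$, with embedded-point structures handled by taking the closure. This gives $\dim C'=3+5+3=11$ and a birational parametrization by a $\P^5$-bundle over $\check{\P}^3$ times $\P^3$, hence rationality. I would check that $C'$ is closed: a flat limit of conic-plus-point schemes still contains a degree $2$ genus $0$ CM subcurve, namely the limit of the conics, which is again a plane conic. The locus $S$ is birational to $\operatorname{Sym}^2$ of the Grassmannian $G(2,4)$, so it has dimension $8$ and is rational. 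Double lines of genus $-1$ lie in $S$ as limits of skew lines moving together.

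To see that $S$ is a separate component and is not contained in $C'$, I would use the normal sheaf rather than dimensions, since a dimension count alone does not rule out $S\subset C'$. For two skew lines, $h^0(N_Y)=2h^0(N_L)=8$, so the Hilbert scheme is smooth of dimension $8$ there. This cannot happen at a point of an $11$-dimensional component. Alternatively, every point of $C'$ contains a planar subcurve, while generic skew lines do not.

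The main obstacle is nonsingularity of $C'$ along the special strata: when the point is embedded in the conic, and especially at singular points of a reducible or double conic. Nonsingularity of $S$ along the double lines is a similar issue. Here I would first try computing $\Hom(\mathcal{I}_Y,\Oo_Y)$ via the resolution of $\mathcal{I}_C$ used in Proposition~\ref{surjectivemap}, and the Rao/double-line resolution respectively. Failing a uniform computation, I would invoke \cite[Theorem 1.1]{chen2011hilbert} directly for smoothness of both components.
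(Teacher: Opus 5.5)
Your proposal is correct, but it takes a genuinely different route from the paper. The paper gives no argument for this statement and simply cites \cite[Theorem 1.1]{soulimani2024bridgeland}, which collects results of \cite{Harris1982Curves}, \cite{lee2000hilbert} and \cite{chen2011hilbert}.

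You instead prove the set-theoretic content yourself. The identity $\operatorname{length}(\mathcal T)=g(C)+1$ for the Cohen--Macaulay part, combined with the genus $-1-a$ of the double line defined by $\mathcal I_L\onto\Oo_L(a)$, yields exactly the dichotomy behind items (a) and (b). The dimensions, rationality and mutual non-containment of the two loci then follow by elementary means, and only nonsingularity is outsourced to \cite{chen2011hilbert}. Your approach makes transparent precisely the facts used later: the sequence \eqref{sqc:y} on $C'$ feeds Proposition~\ref{surjectivemap}, and your $a=0$ case is the sequence $0\to\Oo_L\to\Oo_Y\to\Oo_L\to0$ invoked in Lemma~\ref{fiber2}. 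The citation, in exchange, delivers smoothness with no computation.

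Two steps still need a line of justification each.

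First, your flat-limit argument shows only one inclusion: the closure of the conic-plus-isolated-point locus consists of first-type schemes. You also need the converse, namely that the $10$-dimensional locus of conics with an embedded point lies in that closure; dimension alone does not give this. For this, observe that first-type schemes with fixed $C$ are parametrized by $\P(\mathcal I_C)\cong\operatorname{Bl}_C\P^3$, which is irreducible because $C$ is a complete intersection. Over the $\P^5$-bundle of plane conics this gives a proper irreducible $11$-dimensional family mapping injectively onto $C'$. That yields irreducibility and closedness of $C'$ at once.

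Second, the fact that genus $-1$ double lines lie in $S$ should be justified. Take $L=\{x=y=0\}$ and the family $L\sqcup\{x=tz,\ y=tw\}$; its flat limit at $t=0$ has ideal $(x^2,xy,y^2,xw-yz)$. Combine this with the fact that the stabilizer of $L$ acts transitively on such double structures.
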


\begin{proof}
See \cite[Theorem 1.1]{soulimani2024bridgeland}.
\end{proof}

In the next result, we prove that the morphism given in Theorem \ref{nowall} is surjective.

\begin{lemma}\label{fiber2}
$\Gamma\colon\mathcal{S}^{\delta}(v_2(1))\rightarrow\operatorname{Hilb}^{2,-1}$ is a proper surjective morphism. 
\end{lemma}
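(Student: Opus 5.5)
Properness is already settled: by Theorem \ref{nowall} every $\delta$-semistable pair of class $v_2(1)$ is saturated for $0<\delta<\frac12$, so Lemma \ref{CorGammaProper} applies directly. Most of the work is therefore surjectivity. For this I will show that every $[Y]\in\operatorname{Hilb}^{2,-1}$ lies in the image of $\Gamma$, using the fiber description of Theorem \ref{ModularSerreHilbert}. Concretely, for each $Y$ I will find a nonzero class in $\Ext^1(\mathcal{I}_Y,\Oo_{\P^3}(-1))$. Such a class gives an extension
$$0\to\Oo_{\P^3}\to\F\to\mathcal{I}_Y(1)\to0.$$
I then need to check that the resulting pair is $\delta$-stable. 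Here I follow the argument in the proof of Proposition \ref{prop:irreducible}. The sheaf $\F$ is torsion-free, being an extension of torsion-free sheaves. Twisting by $\Oo_{\P^3}(-1)$ gives $h^0(\F(-1))=h^0(\mathcal{I}_Y)=0$, so $\F(-1)$ is a rank $2$ sheaf with $c_1=-1$ and no sections, hence $\mu$-stable. Proposition \ref{prop:stable-sheaf-stable-pair} then makes $(\F,s)$ $\delta$-stable. Because $\Gamma$ is proper, its image is closed, so it would even suffice to hit a dense subset of each component; I will nevertheless treat all points directly.

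By Theorem \ref{hilb2}, $\operatorname{Hilb}^{2,-1}=C'\cup S$, and I will split into these two cases.
\begin{itemize}
\item \emph{Case $Y\in C'$.} Here $Y$ fits into the sequence \eqref{sqc:y} with $C$ a plane conic. Proposition \ref{surjectivemap} with $c=2$ gives $\operatorname{ext}^1(\mathcal{I}_Y,\Oo_{\P^3}(-1))=5\neq0$, so every such $Y$ is in the image.
\item \emph{Case $Y\in S\setminus(S\cap C')$.} Here $Y$ is either two skew lines or a double line of genus $-1$. In both situations $Y$ is a locally Cohen--Macaulay curve; for the double line this holds because genus $-1$ rules out embedded points, since the multiplicity structure already accounts for the genus.
\end{itemize}

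For the second case I compute $\Ext^1(\mathcal{I}_Y,\Oo_{\P^3}(-1))$ via the local-to-global spectral sequence. Since $\lhom(\mathcal{I}_Y,\Oo(-1))=\Oo(-1)$ has $H^1=0$, the group $\Ext^1$ contains $H^0(\lext^1(\mathcal{I}_Y,\Oo(-1)))=H^0(\omega_Y(3))$. The relevant $H^2$ term is $H^2(\Oo(-1))=0$, so this is in fact an isomorphism. For skew lines $L_1\sqcup L_2$, $\omega_Y(3)=\Oo_{L_1}(1)\oplus\Oo_{L_2}(1)$, which has $4$ sections. For the double line, $\omega_Y$ is an extension of $\Oo_L$-line bundles determined by the genus. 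Using $\chi(\omega_Y(3))=-\chi(\Oo_Y(-3))=2\cdot 3+2=8$ by Serre duality, together with $H^1(\omega_Y(3))=H^0(\Oo_Y(-3))^*=0$, I get $h^0(\omega_Y(3))=8\neq0$. In both subcases the group is nonzero, so these $Y$ are also in the image.

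The step I expect to be the main obstacle is the double-line subcase. There I need $Y$ to be locally CM, and I need the vanishing $H^0(\Oo_Y(-3))=0$. The latter should follow from the sequence $0\to\Oo_L(1)\to\Oo_Y\to\Oo_L\to0$ for a genus $-1$ double structure (the ideal of $L$ in $Y$ is $\Oo_L(1)$ when $g=-1$). If that description becomes delicate, the fallback is the properness argument noted above: $S\setminus C'$ is irreducible, so it is enough to exhibit sheaves over a general pair of skew lines and then use closedness of the image to reach all double lines.
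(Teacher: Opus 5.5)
Your argument is essentially the paper's. Properness comes from Lemma~\ref{CorGammaProper} via Theorem~\ref{nowall}. Surjectivity comes from $\Ext^1(\mathcal{I}_Y,\Oo_{\P^3}(-1))\neq0$ at every point, with Proposition~\ref{surjectivemap} handling $C'$. On $S\setminus(S\cap C')$, your identification $\Ext^1\simeq H^0(\omega_Y(3))$ is, by duality on $Y$, the same as the paper's $\Ext^1\simeq H^2(\mathcal{I}_Y(-3))^*\simeq H^1(\Oo_Y(-3))^*$. The paper evaluates the latter directly from $0\to\Oo_{L_1}\to\Oo_Y\to\Oo_{L_2}\to0$, which treats skew lines and double lines uniformly and needs no Cohen--Macaulay discussion.

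Two slips in your double-line case should be corrected. They do not affect surjectivity, but the exact value (fibre $\P^3$) is what Proposition~\ref{S22} uses later.
\begin{itemize}
\item A genus $-1$ double line has $\mathcal{I}_L/\mathcal{I}_Y\simeq\Oo_L$. The twist $\Oo_L(1)$ would give genus $-2$.
\item $-\chi(\Oo_Y(-3))=-(2\cdot(-3)+2)=4$, so $h^0(\omega_Y(3))=4$, not $8$. This must agree with your skew-line count, since this $\chi$ depends only on the Hilbert polynomial.
\end{itemize}

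Also, ``no sections, hence $\mu$-stable'' is not automatic for non-reflexive sheaves: $\mathcal{I}_p\oplus\Oo_{\P^3}(-1)$ has no sections but is unstable. Here it does hold, because the extension class is nonzero. A destabilizing $\mathcal{L}\subset\F(-1)$ would map isomorphically onto some $\mathcal{I}_W\subseteq\mathcal{I}_Y$ with $\dim W\leq 1$. Since $\Ext^1(\mathcal{I}_Y/\mathcal{I}_W,\Oo_{\P^3}(-1))=0$, the restriction map to $\Ext^1(\mathcal{I}_W,\Oo_{\P^3}(-1))$ is injective. The lift provided by $\mathcal{L}$ kills the restricted class, so the extension would split. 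The paper's Proposition~\ref{prop:irreducible} takes the same shortcut.
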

\begin{proof}
We compute the dimension of the relevant extension group at every point of each component.

When $[Y]\in C'$, we can use Proposition \ref{surjectivemap} to conclude that $\operatorname{ext}^1(\mathcal{I}_{Y},\Oo_{\P ^3}(-1))=5$.

When $[Y]\in S\setminus(S\cap C')$, then $\Oo_Y$ fits into a short exact sequence of the form
$$ 0 \to \Oo_{L_1} \to \Oo_Y \to \Oo_{L_2} \to 0, $$
where $L_1$ and $L_2$ are (possibly equal) lines, the twist being trivial because $Y$ has genus $-1$, cf.\ \cite[Remark 2.4.1]{chang1984small}. Therefore:
$$ \Ext^1(\mathcal{I}_Y,\Oo_{\P^3}(-1)) \simeq H^2(\mathcal{I}_{Y}(-3))^* \simeq H^1(\Oo_Y(-3))^* \simeq H^1(\Oo_{L_1}(-3))^* \oplus H^1(\Oo_{L_2}(-3))^* ,$$
which gives directly that $\operatorname{ext}^1(\mathcal{I}_{Y},\Oo_{\P ^3}(-1))=4$.

Summing up:
$$ \operatorname{ext}^1({\mathcal{I}_{Y}},\Oo_{\P ^3}(-1)) = \begin{cases} 
    5, & \text{if } [Y]\in C' \\
    4, & \text{if } [Y]\in S\setminus(S\cap C'). 
\end{cases}$$
In particular, we conclude that 
$\Gamma \colon  \mathcal{S}^{\delta}(v_2(1))  \rightarrow  \operatorname{Hilb}^{2,-1}$ is surjective.
\end{proof}

We now use this lemma to relate the geometry of the Hilbert scheme to that of the moduli space of pairs.

\begin{proposition}\label{S22}
For $0 < \delta < \frac{1}{2}$ the moduli space $\mathcal{S}^{\delta}(v_2(1))$ is connected and has two irreducible components, of dimensions $11$ and $15$.
\end{proposition}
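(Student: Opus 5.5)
We plan to read off the geometry of $\mathcal{S}^{\delta}(v_2(1))$ from the proper surjective morphism $\Gamma\colon\mathcal{S}^{\delta}(v_2(1))\to\operatorname{Hilb}^{2,-1}$ of Lemma \ref{fiber2}. Its fibers are $\P\Ext^1(\mathcal{I}_Y,\Oo_{\P^3}(-1))$ by Theorem \ref{ModularSerreHilbert}. By the computation in Lemma \ref{fiber2}, they are $\P^4$ over $C'$ and $\P^3$ over $S\setminus(S\cap C')$. By Theorem \ref{hilb2}, both $C'$ and $S$ are irreducible, of dimensions $11$ and $8$.

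\textbf{Step 1.} Since the fiber dimension is constant along $C'$, the preimage $\Gamma^{-1}(C')$ is irreducible of dimension $11+4=15$. The argument is the same as in Proposition \ref{prop:irreducible}: a projective-space bundle over a dense open set, plus properness, so every point lies in the closure of that bundle. Similarly, $\Gamma^{-1}(S\setminus(S\cap C'))$ is irreducible of dimension $8+3=11$; let $Z$ be its closure.

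\textbf{Step 2.} Then $\mathcal{S}^{\delta}(v_2(1))=\Gamma^{-1}(C')\cup Z$, since the two sets cover the whole space. Neither piece contains the other. Indeed, $Z$ has dimension $11<15$, so it cannot contain $\Gamma^{-1}(C')$. Conversely, the image $\Gamma(\Gamma^{-1}(C'))=C'$ does not contain the general point of $S$, so $\Gamma^{-1}(C')$ does not contain $Z$. Hence there are exactly two irreducible components, of dimensions $15$ and $11$.

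\textbf{Step 3 (connectedness).} $\operatorname{Hilb}^{2,-1}$ is connected because $S\cap C'\neq\varnothing$; this is part of the known description in \cite{chen2011hilbert,soulimani2024bridgeland}, for instance a conic union a point on it degenerating from a double line. Given $[Y]\in S\cap C'$, the fiber $\Gamma^{-1}([Y])$ lies in $\Gamma^{-1}(C')$. It also meets $Z$, because $Z$ is closed and $\Gamma(Z)=S$ by properness, so $Z$ has a point over $[Y]$. Thus the two components intersect, and $\mathcal{S}^{\delta}(v_2(1))$ is connected.

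The main obstacle is Step 1 for the non-generic loci: a constant-dimensional projective fibration over an irreducible base need not be irreducible without some flatness. We would handle this by noting that $\Gamma^{-1}(C')$ is the projectivization of a coherent relative $\Ext$-sheaf whose rank is constant, hence locally free, so $\Gamma^{-1}(C')$ is a genuine $\P^4$-bundle over the smooth variety $C'$. The same argument applies to $Z$ over the open set $S\setminus C'$, and taking closures finishes the proof. A secondary point is making sure that $S\cap C'$ is nonempty; if needed, we would exhibit an explicit flat family, namely a pair of skew lines degenerating to a plane conic with an embedded point.
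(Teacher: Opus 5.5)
Your proposal is correct and matches the paper's decomposition $\Gamma^{-1}(C')\cup\overline{\Gamma^{-1}(S\setminus C')}$, the dimension counts $11+4$ and $8+3$, and the non-containment argument. It differs in how irreducibility and connectedness are justified.

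\textbf{Connectedness.} The paper invokes the topological fact that a proper surjection with connected fibres onto a connected base has connected source. You instead show directly that the two components meet, using $S\cap C'\neq\varnothing$ and $\Gamma(Z)=S$ (from closedness of $\Gamma(Z)$ and density of $S\setminus C'$ in $S$). This rests on the same inputs and gives slightly more: $\Gamma$ maps the intersection of the two components onto $S\cap C'$.

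\textbf{Irreducibility.} Your Step 1 parenthetical does not work as stated. Properness plus a bundle structure over a dense open subset does not by itself make that open preimage dense; a dimension count only bounds a putative extra component by $14$. Your final paragraph repairs this, but with heavy input: base change for the relative $\Ext$ sheaf over the reduced base $C'$, plus a universal extension producing a $\P^4$-bundle that surjects onto $\Gamma^{-1}(C')$.

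The paper's one-line argument tacitly uses a lighter standard lemma. Let $f$ be a proper surjection onto an irreducible variety whose fibres are all irreducible of one fixed dimension $d$. For each component $X_i$ of the source, the locus of base points over which $X_i$ has fibre dimension $\geq d$ is closed, by upper semicontinuity and properness. These loci cover the base, so one of them is everything, and irreducibility of the fibres forces that $X_i$ to contain every fibre. Since $\Gamma$ is proper (Lemma \ref{CorGammaProper}) and its fibres are projective spaces of constant dimension over $C'$ and over $S\setminus C'$, this lemma settles Step 1 without any flatness or base-change discussion.
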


\begin{proof}
Set $\mathcal{S}_1\coloneqq\Gamma^{-1}(C')$ and $\mathcal{S}_2\coloneqq\overline{\Gamma^{-1}(S\setminus(S\cap C'))}$; we argue that both closed subsets of $\mathcal{S}^{\delta}(v_2(1))$ are irreducible.

First, we noted in the proof of Lemma \ref{fiber2} that the fibers of the restricted morphism $\mathcal{S}_1\to C'$ have constant dimension equal to $4$. Since $\mathcal{S}_1\to C'$ is surjective, and $C'$ is irreducible, we conclude that $\mathcal{S}_1$ is irreducible of dimension $\dim C'+4=15$.

Similarly, $\Gamma^{-1}([Y])\simeq\P^3$ for every $Y\in S\setminus(S\cap C')$, thus $\overline{\Gamma^{-1}(S\setminus(S\cap C'))}$ is irreducible of dimension equal to $\dim S+3=11$.

Since $\Gamma$ is surjective, we have 
$$ \mathcal{S}^{\delta}(v_2(1))=\Gamma^{-1}(C')\cup\Gamma^{-1}(S\setminus(S\cap C'))\subseteq\mathcal{S}_1\cup\mathcal{S}_2 \subseteq \mathcal{S}^{\delta}(v_2(1)),$$
thus $\mathcal{S}^{\delta}(v_2(1))=\mathcal{S}_1\cup\mathcal{S}_2$. Neither is contained in the other: $\dim\mathcal{S}_1>\dim\mathcal{S}_2$, while $\Gamma(\mathcal{S}_1)\subseteq C'$ and $\Gamma(\mathcal{S}_2)$ contains points of $S\setminus(S\cap C')$.

As for connectedness, note that $\operatorname{Hilb}^{2,-1}$ is connected. The morphism $\Gamma \colon \mathcal{S}^{\delta}(v_2(1)) \to \operatorname{Hilb}^{2,-1}$ is a proper surjective map whose fibers are projective spaces, which are connected. Since the base and the fibers of this morphism are connected, it follows that $\mathcal{S}^{\delta}(v_2(1))$ is connected.
\end{proof}

\subsection{Proof of Theorem \ref{M2} and Main Theorem item (c).} We can now check the connectedness of $\M(v_2)$: $ \mathcal{S}^{\delta}(v_2(1))$ is connected by Proposition \ref{S22} and the morphism $\Psi\colon \mathcal{S}^{\delta}(v_2(1)) \to \M(v_2)$ is surjective, thus $\M(v_2)$ is connected.
The source $\mathcal{S}^{\delta}(v_2(1))$ is proper over $\mathbb{C}$ by Lemma \ref{CorGammaProper} and the projectivity of $\operatorname{Hilb}^{2,-1}$. Since $\M(v_2)$ is separated over $\mathbb{C}$, the morphism $\Psi$ is proper. It is also surjective, as established above, so the image of an irreducible component in $\mathcal{S}^{\delta}(v_2(1))$ is an irreducible closed subset of $\M(v_2)$; hence, to prove that $\M(v_2)$ has exactly two irreducible components, it suffices to argue that the image of one component is not contained in the image of the other. 

Let $\mathcal{S}^{\delta}(v_2(1))=\mathcal{S}_1 \cup \mathcal{S}_2$ be the decomposition of $\mathcal{S}^{\delta}(v_2(1))$ into its two irreducible components, as presented in the proof of Proposition \ref{S22}. 

Assume first that  $\Psi(\mathcal{S}_1)\subset\Psi(\mathcal{S}_2) = \M(v_2)$, and take $(\E(1),s)\in\mathcal{S}_1\setminus\mathcal{S}_2$; we can choose $\E$ such that $h^0(\E(1))=1$ since a generic point $[Y]\in C'$ represents a nonplanar scheme. Then there is $(\E'(1),s')\in\mathcal{S}_2$ such that $\Psi(\E(1),s)=\Psi(\E'(1),s')$, thus $\E\simeq\E'$ and one can assume that $s,s'\in H^0(\E(1))$. It follows that $s=\lambda s'$ for some $\lambda\in\mathbb{C}^*$, thus $(\E(1),s)\simeq(\E'(1),s')$, contradicting the hypothesis that $(\E(1),s)\in\mathcal{S}_1\setminus\mathcal{S}_2$ while $(\E'(1),s')\in\mathcal{S}_2$.

Next, assume that ${\Psi}(\mathcal{S}_2)\subset\Psi(\mathcal{S}_1) = \M(v_2)$, and take $(\E(1),s)\in\mathcal{S}_2\setminus\mathcal{S}_1$; note that no curve $Y\in S\setminus(S\cap C')$ is planar, so $h^0(\E(1))=1$. We then obtain a contradiction similar to the previous case.

Finally, we identify the two components. Since $\Psi$ is injective on the dense open subsets of $\mathcal{S}_1$ and $\mathcal{S}_2$ where $h^0(\E(1))=1$, we have $\dim\Psi(\mathcal{S}_1)=15$ and $\dim\Psi(\mathcal{S}_2)=11$. On the other hand, $\R(v_2)$ is irreducible of dimension $11$ by \cite{chang1984small}, and its closure is not contained in $\operatorname{T}(-1,2,4,1)$, whose generic point is not reflexive; hence $\overline{\R(v_2)}$ is an irreducible component of dimension $11$, and $\operatorname{T}(-1,2,4,1)$ is one of dimension $15$ by Theorem~\ref{0dcomp}. Therefore $\Psi(\mathcal{S}_2)=\overline{\R(v_2)}$ and $\Psi(\mathcal{S}_1)=\operatorname{T}(-1,2,4,1)$.

\subsection{Spectra of sheaves}\label{sec:spectra-v2}

Given $[\E]\in\M(v_2)$, set $s_\E\coloneqq h^0(\lext^2(\E,\Oo_{\Pt}))$. We
show that the pairs $(s_\E,\text{spectrum of }\E)$ that occur are exactly
\[
\bigl(0,(-1,-1)\bigr)\qquad\text{and}\qquad\bigl(1,(-2,-1)\bigr),
\]
and that for every such sheaf
\[
h^1(\E(t))=s_\E\quad(t\leq-1),\qquad h^2(\E(t))=0\quad(t\geq0),
\qquad h^0(\E(1))=1+h^1(\E(1))\geq1.
\]

Indeed, by \eqref{eq:spectrum-sum-minus-one} with $c_2=c_3=2$ we have
$k_1+k_2=-2-s_\E$, and by Proposition~\ref{charspec}(b) both entries are
at least $-2$, so that $s_\E\leq2$. If $s_\E=2$, then $k_1=k_2=-2$, and
Proposition~\ref{charspec}(b) would force $-1$ to occur as well, which is
impossible. If $s_\E=1$, then $k_1+k_2=-3$ and the only possibility is
$(-2,-1)$. If $s_\E=0$, then $k_1+k_2=-2$, leaving $(-2,0)$ and $(-1,-1)$;
the first is again excluded by Proposition~\ref{charspec}(b). Both
remaining pairs occur: every $[\E]\in\R(v_2)$ has $s_\E=0$, hence spectrum
$(-1,-1)$, while the kernel of an epimorphism $\F\to\Oo_p$ with
$[\F]\in\R(-1,2,4)$ and $p\notin\sing(\F)$, as in Theorem~\ref{0dcomp},
has $s_\E=1$, as follows from the local Ext sequence of $0\to\E\to\F\to\Oo_p\to0$.

The cohomology formulas follow by substituting the two spectra into
Theorem~\ref{thm-spectrum}. Finally, \eqref{eq:RR-spectrum} gives
$\chi(\E(1))=1$, and $h^3(\E(1))=0$ by stability, so the vanishing of
$h^2(\E(1))$ proves the last assertion.

\subsection{Classifying non-reflexive sheaves in \texorpdfstring{$\M(v_2)$}{M2}}

Let us take this opportunity to rectify the claim in \cite[Theorem 26(iii)]{almeida2022irreducible}. The proof of \cite[Theorem 26]{almeida2022irreducible} correctly classifies all non-reflexive sheaves $\E\in\M(v_2)$, which fall in three distinct families:

\begin{itemize}
\item[(I)] $\E=\ker\{\F\onto\Oo_p\}$, with $\F\in\R(-1,2,4)$ and $p$ is a point; every such sheaf belongs to the component $\operatorname{T}(-1,2,4,1)$.
\item[(II)] $\E=\ker\{\F\onto\Oo_L\}$, with $\F\in\R(-1,1,1)$ and $L$ is a line; this family is denoted by $\operatorname{X}(-1,1,1,0,0)$ in the notation of \cite{almeida2022irreducible};
\item[(III)] $\E=\ker\{\F\onto\mathcal{Q}\}$, with $\F\in\R(-1,1,1)$ and $\mathcal{Q}$ fitting into an exact sequence of the form 
\[ 0 \to \Oo_p \to \mathcal{Q} \to \Oo_L(-1) \to 0 ; \]
this family is denoted by $\operatorname{X}(-1,1,1,-1,1)$ in the notation of \cite{almeida2022irreducible};
\end{itemize}

The latter family is an irreducible scheme of dimension 11, Theorem \ref{Xscheme}, fully contained in the component $\operatorname{T}(-1,2,4,1)$, see \cite[Proposition 25(i)]{almeida2022irreducible}.

The family $\operatorname{X}(-1,1,1,0,0)$ is an irreducible scheme of dimension 9, Theorem \ref{Xscheme}; we claim that it is contained in $\overline{\R(v_2)}$, and that its generic point does not lie in $\operatorname{T}(-1,2,4,1)$. Indeed, the generic point $[\E] \in \operatorname{X}(-1,1,1,0,0)$ satisfies $\dim \Ext^1(\E,\E) = 11$ by Theorem \ref{Xscheme}; therefore, $[\E]\notin \operatorname{T}(-1,2,4,1)$, since every closed point of the $15$-dimensional component $\operatorname{T}(-1,2,4,1)$ has Zariski tangent space of dimension at least $15$. Hence the generic point of $\operatorname{X}(-1,1,1,0,0)$ lies in $\overline{\R(v_2)}$; since $\operatorname{X}(-1,1,1,0,0)$ is irreducible and $\overline{\R (v_2)}$ is closed, $\operatorname{X}(-1,1,1,0,0) \subset \overline{\R (v_2)}$. Note also that $\R(v_2)\cap \operatorname{T}(-1,2,4,1)=\varnothing$: the argument in the proof of Proposition~\ref{prop:boundary-v3} showing that $\operatorname{T}(-1,3,9,1)$ consists of sheaves of homological dimension $2$ applies verbatim to $\operatorname{T}(-1,2,4,1)$, while reflexive sheaves have homological dimension at most $1$.

We conclude:

\begin{proposition}\label{prop:boundary-v2}
$$ \overline{\R(v_2)}\setminus\R(v_2)=\operatorname{X}(-1,1,1,0,0) ~\cup~ \overline{\R(v_2)}\cap \operatorname{T}(-1,2,4,1). $$
\end{proposition}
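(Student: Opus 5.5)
The claim is a set-theoretic equality, so I will prove the two inclusions separately. The right-hand side is made of non-reflexive sheaves: $\operatorname{X}(-1,1,1,0,0)$ by construction, and $\overline{\R(v_2)}\cap\operatorname{T}(-1,2,4,1)$ because, as noted above, $\R(v_2)\cap\operatorname{T}(-1,2,4,1)=\varnothing$.

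\emph{Inclusion $\supseteq$.} We showed just before the statement that $\operatorname{X}(-1,1,1,0,0)\subset\overline{\R(v_2)}$. A point of $\overline{\R(v_2)}\cap\operatorname{T}(-1,2,4,1)$ lies in $\overline{\R(v_2)}$ and, by the disjointness just recalled, not in $\R(v_2)$. This gives $\supseteq$.

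\emph{Inclusion $\subseteq$.} Take $[\E]\in\overline{\R(v_2)}\setminus\R(v_2)$. Since $\R(v_2)$ is the open locus of reflexive sheaves, $\E$ is not reflexive. By the classification recalled above, $\E$ then belongs to family (I), (II) or (III).

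\begin{itemize}
\item If $\E$ is of type (I), then $[\E]\in\operatorname{T}(-1,2,4,1)$, so $[\E]\in\overline{\R(v_2)}\cap\operatorname{T}(-1,2,4,1)$.
\item If $\E$ is of type (III), then $[\E]\in\operatorname{X}(-1,1,1,-1,1)\subset\operatorname{T}(-1,2,4,1)$ by \cite[Proposition 25(i)]{almeida2022irreducible}, with the same conclusion.
\item If $\E$ is of type (II), then $[\E]\in\operatorname{X}(-1,1,1,0,0)$ directly.
\end{itemize}

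In all three cases $[\E]$ lies in the right-hand side, which proves $\subseteq$.

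The only step I expect to need care is the completeness of the trichotomy (I)--(III) for non-reflexive sheaves in $\M(v_2)$. I will take it from the proof of \cite[Theorem 26]{almeida2022irreducible}. As a sanity check, the spectrum analysis of Subsection~\ref{sec:spectra-v2} is consistent with it: $s_\E\le1$ forces $\E^{\vee\vee}/\E$ to have zero-dimensional part of length at most one. If $\E^{\vee\vee}/\E$ is zero-dimensional, then $\E^{\vee\vee}\in\R(-1,2,4)$, which gives (I). Otherwise the quotient has one-dimensional support, so $c_2$ drops to that of $\R(-1,1,1)$, and the support is a line with twist $0$ or $-1$, which gives (II) or (III).
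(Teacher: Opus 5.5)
Your proof is correct and follows the paper's argument essentially verbatim. The inclusion $\subseteq$ comes from the trichotomy (I)--(III) of \cite[Theorem 26]{almeida2022irreducible}, with (I) and (III) contained in $\operatorname{T}(-1,2,4,1)$ and (II) equal to $\operatorname{X}(-1,1,1,0,0)$; the inclusion $\supseteq$ comes from $\operatorname{X}(-1,1,1,0,0)\subset\overline{\R(v_2)}$ and $\R(v_2)\cap\operatorname{T}(-1,2,4,1)=\varnothing$, and you additionally make explicit that $\operatorname{X}(-1,1,1,0,0)$ consists of non-reflexive sheaves, which the paper uses tacitly.
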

\begin{proof} Every non-reflexive sheaf in $\M(v_2)$ belongs to one of the families (I), (II), (III); the first and the third lie in $\operatorname{T}(-1,2,4,1)$, and the second is $\operatorname{X}(-1,1,1,0,0)\subset\overline{\R(v_2)}$. This gives the inclusion $\subseteq$. The reverse inclusion follows from $\operatorname{X}(-1,1,1,0,0)\subset\overline{\R(v_2)}$ and $\R(v_2)\cap \operatorname{T}(-1,2,4,1)=\varnothing$. \end{proof}

It would be interesting to determine whether the intersection $\overline{\R(v_2)}\cap \operatorname{T}(-1,2,4,1)$ is irreducible and compute its dimension.

\section{Description of \texorpdfstring{$\M(v_3)$}{M3}}\label{sec:M(-1,3,7)}

The moduli space $\M(v_3)$ is known to contain at least two irreducible components: the closure of the moduli space of stable rank 2 reflexive sheaves constructed by Chang \cite{chang1984small}, and the T-component constructed in Theorem \ref{0dcomp}. In this section, we will prove that, in fact, these components are the only two and complete the proof of item (d) of the Main Theorem \ref{maintheorem}. Once again, we will use the modular Serre correspondence and a technique similar to that of the previous section.

\begin{theorem}\label{M3}
The moduli space $\M(v_3)$ is connected and has exactly two irreducible components, namely:

\begin{enumerate}
\item[(a)] the closure $\overline{{\R(v_3)}}$ of the family of reflexive sheaves ${\R(v_3)}$, which is rational of dimension $19$ and generically smooth;

\item[(b)] the irreducible component $\operatorname{T}(-1,3,9,1)$ given by Theorem \ref{0dcomp}, of dimension $23$, whose generic element is a torsion-free sheaf $\E$ such that $\E ^{\vee \vee} \in {\mathcal{R}(-1,3,9)}$ and $\E ^{\vee \vee}/\E=\Oo_p$ for some $p\in\P^3$.

\end{enumerate}
\end{theorem}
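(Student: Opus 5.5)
The plan is to mimic the treatment of $\M(v_2)$ in Section~\ref{sec:recover-2-2}. Theorem~\ref{nowall} applies with $c=3$, so for $0<\delta<\frac12$ we have both morphisms $\Psi\colon\S^\delta(v_3(1))\to\M(v_3)$ and $\Gamma\colon\S^\delta(v_3(1))\to\operatorname{Hilb}^{3,0}$. Lemma~\ref{section} gives $h^0(\E(1))\ge1$ for every $[\E]\in\M(v_3)$. Combined with Proposition~\ref{prop:stable-sheaf-stable-pair}, this makes $\Psi$ surjective, and it is proper because $\S^\delta(v_3(1))$ is projective. The Hilbert scheme $\operatorname{Hilb}^{3,0}$ of degree $3$, genus $0$ subschemes is classically known (Piene--Schlessinger) to have exactly two components meeting transversally. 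The first is the $12$-dimensional component $H_1$ of twisted cubics. The second is the $15$-dimensional component $H_2$ whose general member is a plane cubic union a point, and all of its members fit into a sequence as in \eqref{sqc:y} with $C$ a plane cubic. I will quote this description as the ``known geometric description'' mentioned in the abstract. Their intersection is nonempty: it consists of plane nodal cubics with an embedded point. So $\operatorname{Hilb}^{3,0}$ is connected.

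Next I compute $\operatorname{ext}^1(\mathcal I_Y,\Oo_{\P^3}(-1))$ pointwise. On $H_2$, Proposition~\ref{surjectivemap} gives $9$, so the fibers of $\Gamma$ over $H_2$ are $\P^8$. On $H_1\setminus H_2$, $Y$ is an ACM curve of degree $3$ and genus $0$ with resolution $0\to\Oo(-3)^2\to\Oo(-2)^3\to\mathcal I_Y\to0$. Applying $\Hom(\cdot,\Oo(-1))$ gives
\[
\operatorname{ext}^1(\mathcal I_Y,\Oo(-1))=2h^0(\Oo(2))-3h^0(\Oo(1))=20-12=8.
\]
Equivalently, by Serre duality this equals $h^1(\Oo_Y(-3))$, which is $8$ for a twisted cubic; I would use this form for degenerate non-reduced members. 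Hence the fibers over $H_1\setminus H_2$ are $\P^7$ and $\Gamma$ is surjective.

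Exactly as in Proposition~\ref{S22}, I then set $\S_1=\Gamma^{-1}(H_2)$, which is irreducible of dimension $15+8=23$, and $\S_2=\overline{\Gamma^{-1}(H_1\setminus H_2)}$, which is irreducible of dimension $12+7=19$. Every point of $\S^\delta(v_3(1))$ lies in $\S_1\cup\S_2$. Neither contains the other, by dimension and because their images under $\Gamma$ differ. Connectedness of $\S^\delta(v_3(1))$ follows because $\Gamma$ is proper and surjective onto a connected base with connected (projective-space) fibers, and this connectedness passes to $\M(v_3)$ through $\Psi$.

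To separate the images, I use that the general $Y$ in either component is non-planar, so $h^0(\E(1))=1$ by Proposition~\ref{prop:gammasurj} and $\Psi$ is injective there. Then $\Psi(\S_1)$ and $\Psi(\S_2)$ have dimensions $23$ and $19$, and the argument of Section~\ref{sec:recover-2-2} shows neither image contains the other. For the identification: a general $\E$ over a twisted cubic is an extension of $\mathcal I_Y(1)$ by $\Oo$ and is reflexive, which is Chang's/Hartshorne's construction. So $\Psi(\S_2)=\overline{\R(v_3)}$, and $\R(v_3)$ is rational of dimension $19$ by \cite{chang1984small}. The $23$-dimensional component must then be $\operatorname{T}(-1,3,9,1)$ of Theorem~\ref{0dcomp}, which gives rationality and generic smoothness. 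Generic smoothness of $\overline{\R(v_3)}$ follows from $\Ext^2(\F,\F)=0$ for general reflexive $\F$, via Chang's computation or the count $\dim\Ext^1=19$.

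The main obstacle is the fiber dimension over the boundary of $H_1$. Over twisted cubics degenerating to non-reduced or singular non-planar members (triple lines on quadric cones, a conic plus a line, and similar), $\operatorname{ext}^1$ must remain $8$, or at least the preimage of $H_1\setminus H_2$ must stay irreducible. I would handle this by using the ACM resolution, which persists on all of $H_1\setminus H_2$ since those members are still ACM. A second delicate point is checking that $\S_1$ really lands in $\operatorname{T}$ rather than splitting further; this is handled by the constant fiber dimension over the irreducible $H_2$.
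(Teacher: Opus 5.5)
Your proposal is correct and follows essentially the same route as the paper: the Piene--Schlessinger decomposition of $\operatorname{Hilb}^{3,0}$, fiber dimensions $9$ and $8$ over the two components, preimages $\S_1,\S_2$ of dimensions $23$ and $19$, connectedness transported through $\Gamma$ and $\Psi$, and separation of the images using $h^0(\E(1))=1$. The only minor variation is in the last step. You identify $\Psi(\S_2)$ with $\overline{\R(v_3)}$ directly, via Hartshorne's reflexivity criterion for extensions over a twisted cubic. The paper instead matches the two components by dimension, using that the generic point of $\operatorname{T}(-1,3,9,1)$ is non-reflexive, and it proves full smoothness of $\R(v_3)$ via $\Ext^2(\F,\F)=0$ rather than relying on Chang's generic reducedness.
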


Item (a) is an immediate consequence of the following claim.

\begin{theorem}\label{thm:R3smooth}
${\R(v_3)}$ is irreducible, smooth, and rational of dimension $19$.
\end{theorem}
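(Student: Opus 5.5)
We will prove the statement by running the modular Serre correspondence of Theorem~\ref{nowall} restricted to reflexive sheaves, i.e. by studying the image of $\R(v_3)$ inside $\operatorname{Hilb}^{3,0}$. Let $[\E]\in\R(v_3)$. By Lemma~\ref{section} we have $h^0(\E(1))\geq 1$, and Lemma~\ref{idealsheaf} applies directly since $\E$ is reflexive with $h^0(\E)=0$. Hence every section $s\in H^0(\E(1))$ gives
\[
0\to\Oo_{\P^3}\to\E(1)\to\mathcal{I}_Y(1)\to 0,
\]
and Theorem~\ref{ModularSerreHilbert} gives $\deg Y=3$ and $p_a(Y)=0$. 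Because $\E$ is reflexive, the classical Serre correspondence of Hartshorne shows that $Y$ is a locally Cohen--Macaulay curve, generically a local complete intersection, with no embedded or isolated points. Conversely, any such $Y$ together with a class $\xi\in\Ext^1(\mathcal{I}_Y,\Oo_{\P^3}(-1))\simeq H^0(\omega_Y(3))$ that generates $\omega_Y(3)$ away from finitely many points yields a reflexive $\E$. The first step is therefore to identify the locus $H^{\rm CM}\subset\operatorname{Hilb}^{3,0}$ of locally CM curves of degree $3$ and genus $0$. These are the twisted cubics together with their CM degenerations, and by Piene--Schlessinger they form an open subset of the $12$-dimensional smooth component. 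The other component of $\operatorname{Hilb}^{3,0}$ (plane cubic plus a point) never contains a CM member.

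The second step is a fiber computation over $H^{\rm CM}$. For a twisted cubic $Y$, Serre duality gives
\[
\operatorname{ext}^1(\mathcal{I}_Y,\Oo_{\P^3}(-1))=h^0(\omega_Y(3))=h^0(\Oo_{\P^1}(7))=8,
\]
and $h^0$ is constant on the whole CM family because $H^1(\omega_Y(3))=H^0(\Oo_Y(-3))^*=0$. The sections generating $\omega_Y(3)$ off a finite set form a nonempty open subset $U_Y\subset\P^7$. Thus $\Gamma^{-1}(H^{\rm CM})\cap\Psi^{-1}(\R(v_3))$ is an open subset of a $\P^7$-bundle over an irreducible smooth $12$-dimensional base. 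It is therefore irreducible, smooth and rational of dimension $19$.

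Next we pass down to $\R(v_3)$ via $\Psi$. No nonplanar CM cubic lies in a plane, and a planar CM curve of degree $3$ would be a plane cubic of genus $1$, not $0$. So Proposition~\ref{prop:gammasurj} gives $h^0(\E(1))=1$ for every $[\E]\in\R(v_3)$. Hence $\Psi$ restricts to a bijective morphism from this open set of pairs onto $\R(v_3)$, and it is an isomorphism onto $\R(v_3)$ because the fibers are the reduced points $\P H^0(\E(1))$. This yields irreducibility, rationality and $\dim\R(v_3)=19$.

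For smoothness we check directly that $\Ext^2(\E,\E)=0$. Then $\dim T_{[\E]}=\operatorname{ext}^1(\E,\E)=1-\chi(\E,\E)=8c_2-3=21$ would be the expected count, which is wrong here, so we will instead compare tangent spaces: show $\operatorname{ext}^1(\E,\E)=19$ using the standard sequences $\Hom(\E,\cdot)$ applied to the Serre sequence, together with $H^1(N_Y)=0$ for CM twisted cubic degenerations.

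We expect the main obstacle to be this smoothness check. Computing $\Ext^1(\E,\E)$ uniformly for all $[\E]\in\R(v_3)$, including sheaves coming from degenerate CM cubics, requires controlling $\Ext^2(\E,\E)$ and the spectral data. Our first attempt will use the spectrum $(-2,-1,0)$ or $(-1,-1,-1)$ forced by \eqref{eq:spectrum-sum-minus-one} with $s_\E=0$, combined with the identification of the pair space as a smooth $\P^7$-bundle, to conclude that $\Psi$ is étale onto $\R(v_3)$. Alternatively, we can fall back on Chang's description in \cite{chang1984small}.
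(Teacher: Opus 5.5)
Your Serre-correspondence argument for irreducibility, dimension $19$ and rationality is a legitimate alternative to the paper, which simply quotes \cite[Theorem 3.15]{chang1984small} for these three properties. It comes with two caveats. First, $U_Y$ can be empty for some $Y$. The triple line with ideal $(x,y)^2$ is an ACM curve of degree $3$ and genus $0$ lying in $H\setminus(H\cap H')$, but it is not generically a local complete intersection. So no class in $\Ext^1(\mathcal{I}_Y,\Oo_{\P^3}(-1))$ yields a reflexive sheaf for it. This is harmless, since you only need $\Psi^{-1}(\R(v_3))$ to be a nonempty open subset of the irreducible $\Gamma^{-1}(H\setminus(H\cap H'))$. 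Second, and more seriously, ``bijective with reduced one-point fibers'' does not make $\Psi^{-1}(\R(v_3))\to\R(v_3)$ an isomorphism. A proper morphism with such fibers is a closed immersion, so it only identifies the source with a closed subscheme of $\R(v_3)$ having the same support, e.g.\ $\R(v_3)_{\rm red}$. Reducedness of $\R(v_3)$ is part of what must be proved. Indeed, if your isomorphism claim held, smoothness would follow at once from the $\P^7$-bundle structure, and your last two paragraphs would be unnecessary.

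The genuine gap is smoothness, which is the only assertion the paper proves beyond Chang, and your plan for it rests on two numerical errors. For $c_1=-1$, Riemann--Roch gives $\chi(\E,\E)=4+2c_1^2-8c_2=-18$. Hence $\operatorname{ext}^1(\E,\E)-\operatorname{ext}^2(\E,\E)=8c_2-5=19$; the value $8c_2-3=21$ is the one for $c_1=0$. So smoothness at $[\E]$ is \emph{equivalent} to $\Ext^2(\E,\E)=0$, which is exactly the check you discarded as inconsistent. ``Showing $\operatorname{ext}^1(\E,\E)=19$'' is the same problem, not a way around it. Likewise, \eqref{eq:spectrum-sum-minus-one} with $(c_2,c_3)=(3,7)$ and $s_\E=0$ gives $\sum k_i=-5$, so the spectrum is $(-2,-2,-1)$, not $(-2,-1,0)$ or $(-1,-1,-1)$. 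The \'etaleness of $\Psi$ is asserted without argument.

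The missing ingredient is a proof that $\Ext^2(\F,\F)=0$ for \emph{every} $[\F]\in\R(v_3)$. The paper obtains it from Chang's Table 3.15.1, which shows $\F$ is $2$-regular with $h^0(\F(-1))=h^1(\F(-2))=0$, combined with \cite[Lemma 18]{JMun25}. If you prefer to stay within your framework, the correct spectrum gives $h^2(\E(1))=0$, hence $h^1(\E(1))=h^0(\E(1))-\chi(\E(1))=0$ for every $[\E]\in\R(v_3)$. Cohomology and base change then makes $\Psi$ an isomorphism of schemes over $\R(v_3)$. You would still need to justify that $\mathcal{S}^\delta(v_3(1))$ is, scheme-theoretically, a $\P^7$-bundle over $H\setminus(H\cap H')$; here $\Ext^2(\mathcal{I}_Y,\Oo_{\P^3}(-1))\simeq H^0(\Oo_Y(-3))^*=0$ helps. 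That step requires the modular Serre correspondence at the level of families, not just on closed points.
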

\begin{proof}
Chang proves in \cite[Theorem 3.15]{chang1984small} that $\R(v_3)$ is irreducible, generically \textit{reduced}, and rational of dimension $19$; we add that $\R(v_3)$ is \textit{smooth}.

Indeed, it is enough to check that $\Ext^2(\F,\F)=0$ for every $\F$ in $\R(v_3)$. Note from \cite[Table 3.15.1]{chang1984small} that any $\F\in \R(v_3)$ is 2-regular and $h^0({\F}(-1))=h^1({\F}(-2))=0$; we can then apply \cite[Lemma 18]{JMun25} to conclude that $\Ext^2({\F},\F)=0$, as desired.
\end{proof}

By Theorem \ref{nowall}, the morphisms $\Psi$ and $\Gamma$ are simultaneously well-defined for $0<\delta<\frac{1}{2}$. From now on, we will fix a $\delta$ in that range. By Lemma \ref{section}, for every $\E$ with Chern classes $(-1,3,7)$ there exists a non-zero section $s \in H^0(\E(1))$. Therefore, we can consider the pair $(\E(1),s)$, which is $\delta$-stable by Proposition~\ref{prop:stable-sheaf-stable-pair}, hence very stable. This proves that the morphism $\Psi$ is surjective.

To study the morphism $\Gamma$, we first need to understand the associated Hilbert scheme. Using Theorem \ref{ModularSerreHilbert}, we find that the codomain of $\Gamma$ is $\operatorname{Hilb}^{3,0}$. Piene and Schlessinger described this Hilbert scheme. 

\begin{theorem}\label{hilb3}
The scheme $\operatorname{Hilb}^{3,0}$ is the union of two nonsingular rational varieties $H$ and $H'$, of dimensions $12$ and $15$, respectively; 
\begin{itemize}
\item[(a)] $H$ is the closure of the set of twisted cubics
\item[(b)] $H'$ is the closure of the set of unions of a smooth plane cubic curve and a point not on it.
\end{itemize}
In addition, their transversal intersection is nonsingular, rational, and has dimension $11$.
\end{theorem}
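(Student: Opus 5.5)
We would not reprove this from scratch: the statement is the theorem of Piene and Schlessinger on the Hilbert scheme of twisted cubics, and in the paper it is enough to cite it. Nevertheless, here is how we would organize a proof.

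\textbf{Step 1: set-theoretic description.} Let $Y\in\operatorname{Hilb}^{3,0}$, so its Hilbert polynomial is $3t+1$. Let $C\subset Y$ be the maximal purely one-dimensional (locally Cohen--Macaulay) subscheme. The kernel of $\Oo_Y\to\Oo_C$ is zero-dimensional of some length $\ell\ge 0$, and therefore $g(C)=g(Y)+\ell=\ell\ge0$. A locally Cohen--Macaulay curve of degree $3$ has arithmetic genus at most $1$, with equality exactly for plane cubics. This leaves two cases.
\begin{itemize}
\item $\ell=0$: then $Y=C$ is a Cohen--Macaulay cubic of genus $0$. Such curves are ACM, with ideal generated by the $2\times2$ minors of a $2\times3$ matrix of linear forms. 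They therefore form an irreducible family containing the twisted cubics as a dense open subset. This family is $H$, of dimension $12=h^0(N_C)$.
\item $\ell=1$: then $C$ is a plane cubic and $\mathcal{I}_Y\subset\mathcal{I}_C$ has colength one, which is exactly the sequence \eqref{sqc:y} with $c=3$. The data consist of a plane ($3$ parameters), a cubic in it ($9$), and a colength-one subideal. The last is a single point off $C$, or an embedded point at some $p\in C$ together with a tangent direction out of the plane; in both cases there are $3$ parameters. This gives $H'$, of dimension $15$.
\end{itemize}
To prove that $H'$ is irreducible, we would show that every scheme with an embedded point is a flat limit of a cubic together with an isolated point tending to $p$ along a line leaving the plane. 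Writing these limits explicitly realizes $H'$ as a blow-up-type projective bundle over the space of pairs (plane cubic, point of $\P^3$), which also gives rationality.

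\textbf{Step 2: smoothness and transversality.} This is the main obstacle. Both components and their intersection are stable under $PGL_4$. We would show that every orbit in $H\cap H'$ specializes to a single most degenerate point, for instance
\[
Y_0:\ \mathcal{I}=(x^2,xy,xz,y^3)
\]
in suitable coordinates (a triple line in a plane with a spatial embedded point). Since the singular locus is closed and invariant, it then suffices to compute the local structure of $\operatorname{Hilb}^{3,0}$ at $Y_0$. There we would compute $\Hom(\mathcal{I},\Oo_Y)$, expecting dimension $16$, and then write down the universal deformation of the generators together with their syzygies, solving the lifting conditions order by order as Piene and Schlessinger do. We expect the complete local ring to be $k[[t_1,\dots,t_{16}]]/(u\cdot(v_1,v_2,v_3))$ up to a change of coordinates. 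This is the union of a smooth $15$-dimensional germ ($u=0$, giving $H'$) and a smooth $12$-dimensional germ ($v_1=v_2=v_3=0$, giving $H$). Their intersection is smooth of dimension $11$ and transversal, in the sense that the two tangent spaces span the full $16$-dimensional tangent space.

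\textbf{Step 3: identifying the intersection.} Finally, $H\cap H'$ is the locus of plane singular cubics with a spatial embedded point at a singular point. This is a $PGL_4$-homogeneous fibration over the space of plane singular cubics, whose dimension is $3+8=11$, with finite fibers generically. That gives rationality, and Step 2 gives smoothness.
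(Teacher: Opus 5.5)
Citing Piene--Schlessinger is exactly what the paper does; its proof is the one-line reference. So as a justification inside this paper your proposal is fine. The sketch you add, however, fails at its central step. Suppose the complete local ring at $Y_0$ were $k[[t_1,\dots,t_{16}]]/(u\cdot(v_1,v_2,v_3))$, with $u,v_i$ part of a regular system of parameters. Then the branch $\{v_1=v_2=v_3=0\}$ would be $13$-dimensional and would meet $\{u=0\}$ in a $12$-dimensional germ. This contradicts the values $\dim H=12$ and $\dim(H\cap H')=11$ that you claim to read off from it.

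The model compatible with the statement is $u\cdot(v_1,v_2,v_3,v_4)$. It has branches of dimensions $15$ and $12$ meeting in dimension $11$, and their tangent spaces span the $16$-dimensional $\Hom(\mathcal{I},\Oo_{Y_0})$, since $15+12-11=16$. So the order-by-order computation has to produce four independent equations, not three.

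The orbit-closure reduction to $Y_0$ also only controls points of $H\cap H'$. $\operatorname{Hilb}^{3,0}$ has two further closed orbits, those of the Borel-fixed points $(x^2,xy,y^2)\in H\setminus H'$ and $(x,y^4,y^3z)\in H'\setminus H$. Smoothness away from the intersection therefore needs its own argument.

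Steps 1 and 3 also miscount the embedded points. (Minor: the CM genus-$0$ cubics form $H\setminus H'$, not $H$.) A plane cubic $C$ is a local complete intersection, so $\mathcal{I}_C\otimes k(p)\cong k^2$ for $p\in C$. Hence the colength-one subideals supported at $p$ form a $\P^1$. Globally they form $\P(\mathcal{I}_C)=\mathrm{Bl}_C\P^3$, whose exceptional divisor (the embedded-point locus) is $2$-dimensional, not a $3$-parameter family.

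This $\P^1$ always contains the planar structure $(h)+\mathfrak{m}_p f$, an embedded point inside the plane, which your ``direction out of the plane'' omits. These are exactly the planar $Y$ of Proposition~\ref{prop:gammasurj}, and limits of isolated points along straight lines leaving the plane never produce them. Identifying $H'$ set-theoretically with the relative blow-up of $\{(\text{plane cubic},\text{point})\}$ along the smooth universal cubic gives irreducibility and rationality of $H'$ directly.

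At a singular point $p$ of $C$, all but one point of this $\P^1$ are non-planar. Yet only one of them lies in $H$: $\mathfrak{m}_p\mathcal{I}_C+(\mathcal{I}_C\cap\mathfrak{m}_p^2)$, the structure with $3$-dimensional Zariski tangent space (for $Y_0$ it gives $(x^2,xy,xz,y^3)$). The others each lie on a smooth surface germ. Over nodal cubics they form a single $12$-dimensional $PGL_4$-orbit, so they are not in $H\cap H'$, whose dimension is at most $11$. ``Spatial'' must mean exactly this; otherwise the fibers in your Step 3 are $1$-dimensional and the count $3+8=11$ fails.
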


\begin{proof}
See \cite[Section 6 Theorem]{piene1985hilbert}.
\end{proof}

We are now ready to check that $\Gamma$ is surjective.

\begin{lemma}\label{fiber3}
$\Gamma\colon\mathcal{S}^{\delta}(v_3(1))\rightarrow\operatorname{Hilb}^{3,0}$ is a proper surjective morphism. 
\end{lemma}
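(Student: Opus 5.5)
Properness is already given by Lemma~\ref{CorGammaProper}, since by Theorem~\ref{nowall} every $\delta$-semistable pair of class $v_3(1)$ is saturated for $0<\delta<\frac12$. The work is in surjectivity, and I will model the argument on Lemma~\ref{fiber2}. By Theorem~\ref{ModularSerreHilbert}, the fiber of $\Gamma$ over $[Y]$ is $\P\Ext^1(\mathcal{I}_Y,\Oo_{\P^3}(-1))$. So it suffices to show that this group is nonzero for every $[Y]\in\operatorname{Hilb}^{3,0}=H\cup H'$, and also that every nonzero extension class yields a $\delta$-stable pair.

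For the second point I repeat the argument in the proof of Proposition~\ref{prop:irreducible}. A nonzero class gives an extension $0\to\Oo_{\P^3}\to\E(1)\to\mathcal{I}_Y(1)\to0$ whose middle term is torsion-free. It is also stable, because $h^0(\E)=h^0(\mathcal{I}_Y)=0$ and $c_1(\E)=-1$. Proposition~\ref{prop:stable-sheaf-stable-pair} then shows the pair is $\delta$-stable.

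For the first point I compute $\operatorname{ext}^1$ on each component, using Serre duality:
\[
\Ext^1(\mathcal{I}_Y,\Oo_{\P^3}(-1))\simeq H^2(\mathcal{I}_Y(-3))^*\simeq H^1(\Oo_Y(-3))^*.
\]
The second isomorphism holds because $H^i(\Oo_{\P^3}(-3))=0$ for $i=1,2,3$.

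\emph{Points of $H'\setminus H$.} By the Piene--Schlessinger description in Theorem~\ref{hilb3}, these $Y$ fit into a sequence \eqref{sqc:y} with $C$ a plane cubic. Proposition~\ref{surjectivemap} with $c=3$ then gives $\operatorname{ext}^1=9$. If the description of $H'$ is not explicit enough for every point, I will instead use the closure argument at the end.

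\emph{Points of $H$ (twisted cubics and their degenerations).} For a smooth twisted cubic, $H^1(\Oo_Y(-3))^*\simeq H^1(\Oo_{\P^1}(-9))^*$, which has dimension $8$. For an arbitrary $[Y]\in H$, I would use that every point of $H$ is an aCM curve, or at least has $\mathcal{I}_Y$ with a resolution $0\to\Oo(-3)^2\to\Oo(-2)^3\to\mathcal{I}_Y\to0$. This is true for the component of twisted cubics, the closure of determinantal nets. Applying $\Hom(\cdot,\Oo(-1))$ to this resolution gives
\[
0\to H^0(\Oo(1))^3\to H^0(\Oo(2))^2\to\Ext^1(\mathcal{I}_Y,\Oo(-1))\to0,
\]
because $\Hom(\mathcal{I}_Y,\Oo(-1))=0$ and $\Ext^1(\Oo(-2),\Oo(-1))=0$. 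This yields $\operatorname{ext}^1\geq 20-12=8$, with equality if the first map is injective.

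\emph{Fallback.} The main obstacle is uniformly controlling the non-reduced or embedded-point members of $H$ and $H\cap H'$. There the resolution above may fail, since points of $H\cap H'$ are plane nodal or cuspidal cubics with an embedded point. For these I would instead rely on the sequence \eqref{sqc:y}, which applies because they lie in $H'$. If neither description covers some point, I would argue by upper semicontinuity: $\operatorname{ext}^1(\mathcal{I}_Y,\Oo(-1))=h^1(\Oo_Y(-3))$ is upper semicontinuous in flat families. It is positive on a dense open subset of each component, hence positive everywhere. This fallback alone already suffices for surjectivity. Alternatively, the image of $\Gamma$ is closed by properness and contains a dense subset of each component, so it is all of $\operatorname{Hilb}^{3,0}$.
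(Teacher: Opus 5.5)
Your proposal is correct and follows essentially the paper's proof. Properness comes from Lemma~\ref{CorGammaProper}. Surjectivity comes from $\operatorname{ext}^1(\mathcal{I}_Y,\Oo_{\P^3}(-1))=9$ on all of $H'$, via \eqref{sqc:y} and Proposition~\ref{surjectivemap}, and $=8$ on $H\setminus(H\cap H')$, via the determinantal resolution. Your semicontinuity and closed-image fallbacks are valid but not needed.

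One caveat concerns the step you borrow from Proposition~\ref{prop:irreducible}. The conditions $h^0(\E)=0$ and $c_1(\E)=-1$ do not by themselves give stability of a non-reflexive sheaf; consider $\mathcal{I}_Z\oplus\mathcal{I}_W(-1)$. The conclusion is still true here. A destabilizing $\mathcal{I}_Z(k)\subset\E(1)$ with $k\geq1$ cannot lie in $\Oo_{\P^3}$, so it embeds in $\mathcal{I}_Y(1)$, which forces $k=1$ and a cokernel supported in dimension at most $1$. Since $\Ext^1$ of such a cokernel into $\Oo_{\P^3}$ vanishes, restricting the nonzero extension class to $\mathcal{I}_Z(1)$ is injective, so the restricted class is nonzero. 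It must also vanish, because $\mathcal{I}_Z(1)$ lifts to $\E(1)$, which is a contradiction.
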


\begin{proof}
When $[Y]\in H'$, we can use Proposition \ref{surjectivemap} to conclude that
$$ \operatorname{ext}^1(\mathcal{I}_{Y},\Oo_{\P ^3}(-1))=9. $$

When $[Y]\in H\setminus(H\cap H')$, it follows that $\mathcal{I}_{Y}$ admits the resolution
$$ 0 \to \opThree(-3)^{\oplus2} \to \opThree(-2)^{\oplus3} \to \mathcal{I}_{Y} \to 0.$$
Applying the functor $\Hom(\cdot,\opThree(-1))$ and taking the associated long exact sequence of cohomology, we get
$$ 0 \to H^0(\opThree(1))^{\oplus3} \to H^0(\opThree(2))^{\oplus2} \to \Ext^1(\mathcal{I}_{Y},\opThree(-1)) \to 0$$
thus 
$$\operatorname{ext} ^1(\mathcal{I}_{Y}, \Oo_{\P^3}(-1)) = 2h^0( \mathcal{O}_{\mathbb{P}^3}(2))  - 3h^0( \Oo _{\P^3}(1) ) = 20 - 12 =8 .$$

Summing up:
$$ \operatorname{ext}^1(\mathcal{I}_{Y},\Oo_{\P ^3}(-1)) = \begin{cases} 
    9, & \text{if } [Y]\in H' \\
    8, & \text{if } [Y]\in H\setminus(H\cap H'). 
\end{cases}$$
In particular, we conclude that 
$\Gamma \colon  \mathcal{S}^{\delta}(v_3(1))  \rightarrow  \operatorname{Hilb}^{3,0}$ is surjective.
\end{proof}

\begin{proposition}\label{S32}
For $0 < \delta < \frac{1}{2}$ the moduli space $\mathcal{S}^{\delta}(v_3(1))$ is connected and has two irreducible components of dimensions $19$ and $23$. 
\end{proposition}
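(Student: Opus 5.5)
The proof follows the template of Proposition~\ref{S22}, with $\operatorname{Hilb}^{3,0}=H\cup H'$ (Theorem~\ref{hilb3}) in place of $\operatorname{Hilb}^{2,-1}=C'\cup S$. We set
\[
\mathcal{S}_1\coloneqq\Gamma^{-1}(H')\qquad\text{and}\qquad \mathcal{S}_2\coloneqq\overline{\Gamma^{-1}(H\setminus(H\cap H'))}.
\]
Both are closed subsets of $\mathcal{S}^{\delta}(v_3(1))$. By Lemma~\ref{fiber3} and Theorem~\ref{ModularSerreHilbert}, every fiber of $\Gamma$ over a point of $H'$ is $\P\Ext^1(\mathcal{I}_Y,\Oo_{\P^3}(-1))\simeq\P^8$. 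Indeed, Proposition~\ref{surjectivemap} applies to every $[Y]\in H'$: by Piene--Schlessinger, each such $Y$ fits into a sequence \eqref{sqc:y} with $C$ a plane cubic, and this holds also for points of $H\cap H'$, which are plane cubics with an embedded or spatial point. Over $H\setminus(H\cap H')$ the fibers are $\P^7$.

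Since $\Gamma|_{\mathcal{S}_1}\colon\mathcal{S}_1\to H'$ is surjective and proper (Lemma~\ref{CorGammaProper}), with irreducible equidimensional fibers $\P^8$ over the irreducible base $H'$, $\mathcal{S}_1$ is irreducible of dimension $15+8=23$. To justify irreducibility I would note that $\Gamma^{-1}(H')$ is a projective bundle, namely the projectivization of the relative $\Ext$-sheaf, whose rank is constant and which therefore commutes with base change. Similarly, $\Gamma^{-1}(H\setminus(H\cap H'))$ is a $\P^7$-bundle over an irreducible $12$-dimensional open set. Hence $\mathcal{S}_2$ is irreducible of dimension $19$.

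Surjectivity of $\Gamma$ gives $\mathcal{S}^{\delta}(v_3(1))=\mathcal{S}_1\cup\mathcal{S}_2$. Neither component contains the other: $\dim\mathcal{S}_1>\dim\mathcal{S}_2$, while $\Gamma(\mathcal{S}_1)\subseteq H'$ and $\Gamma(\mathcal{S}_2)$ meets $H\setminus H'$. This gives exactly two irreducible components, of dimensions $23$ and $19$.

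Connectedness follows as before. $\operatorname{Hilb}^{3,0}$ is connected, since $H\cap H'\neq\varnothing$ has dimension $11$ by Theorem~\ref{hilb3}. The map $\Gamma$ is proper and surjective with connected fibers, which are projective spaces. A closed-open decomposition of the source would therefore project to a disjoint closed cover of the base, because each fiber lies entirely in one piece. So the source is connected.

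The main obstacle is the irreducibility step. An equidimensional family of fibers over an irreducible base does not by itself give irreducibility; one needs the projective bundle structure, or else an argument that no extra component lives over a proper closed subset. For $\mathcal{S}_1$, constancy of $\operatorname{ext}^1$ over all of $H'$ yields a genuine projective bundle by cohomology and base change, so this case is safe. I would double-check the claim that every point of $H\cap H'$ satisfies \eqref{sqc:y}, using Piene--Schlessinger's description of the intersection as plane cubics with an embedded point. If that claim failed at some points, I would instead bound the dimension of $\Gamma^{-1}(H\cap H')\le 11+8<19$. That bound shows $\Gamma^{-1}(H\cap H')$ cannot be a component, so the decomposition still holds.
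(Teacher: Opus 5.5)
Your proof is correct and follows the paper's argument step for step. It uses the same decomposition $\mathcal{S}_1=\Gamma^{-1}(H')$, $\mathcal{S}_2=\overline{\Gamma^{-1}(H\setminus(H\cap H'))}$, the same fiber dimensions from Lemma~\ref{fiber3}, and the same connectedness argument. Your projective-bundle justification of irreducibility spells out a step the paper leaves implicit; for a proper surjection onto an irreducible base with irreducible fibers of constant dimension, irreducibility is in fact automatic.

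The fallback in your last paragraph is unnecessary. Every point of $H'$, including those of $H\cap H'$, is a plane cubic plus a possibly embedded point, so it satisfies \eqref{sqc:y}, exactly as the paper assumes. As written, the fallback would not work anyway: $11+8=19$ is not less than $19$, semicontinuity only bounds $\operatorname{ext}^1$ from below on $H\cap H'$, and a low-dimensional closed subset can still be an irreducible component.
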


\begin{proof}
We follow the same argument as in the proof of Proposition \ref{S22}. Set $\mathcal{S}_1\coloneqq\Gamma^{-1}(H')$ and $\mathcal{S}_2\coloneqq\overline{\Gamma^{-1}(H\setminus(H\cap H'))}$; we argue that both closed subsets of $\mathcal{S}^{\delta}(v_3(1))$ are irreducible.

First, Lemma \ref{fiber3} shows that $\Gamma^{-1}([Y])\simeq\mathbb{P}^8$ for every $Y\in H'$. Therefore, $\mathcal{S}_1\coloneqq\Gamma^{-1}(H')$ is irreducible of dimension $\dim H'+8=23$.

Similarly, $\Gamma^{-1}({[Y]})\simeq\mathbb{P}^7$ for every $Y\in H\setminus(H\cap H')$, thus $\Gamma^{-1}(H\setminus(H\cap H'))$ is irreducible of dimension $12+7=19$. Therefore, $\dim\mathcal{S}_2=19$.

Since $\Gamma$ is surjective, we have 
$$ \mathcal{S}^{\delta}(v_3(1))=\Gamma^{-1}(H')\cup\Gamma^{-1}(H\setminus(H\cap H'))\subseteq\mathcal{S}_1\cup\mathcal{S}_2 \subseteq \mathcal{S}^{\delta}(v_3(1)),$$
thus $\mathcal{S}^{\delta}(v_3(1))=\mathcal{S}_1\cup\mathcal{S}_2$. Neither is contained in the other: $\dim\mathcal{S}_1>\dim\mathcal{S}_2$, while $\Gamma(\mathcal{S}_1)\subseteq H'$ and $\Gamma(\mathcal{S}_2)$ contains points of $H\setminus(H\cap H')$.

As for connectedness, note that $\operatorname{Hilb}^{3,0}$ is connected. The morphism $\Gamma \colon \mathcal{S}^{\delta}(v_3(1)) \to \operatorname{Hilb}^{3,0}$ is a proper surjective map whose fibers are projective spaces, which are connected. Since the base and the fibers of this morphism are connected, it follows that $\mathcal{S}^{\delta}(v_3(1))$ is connected.
\end{proof}

To complete the proof of Theorem \ref{M3}, we need to prove that $\M(v_3)$ has exactly two components just like $\mathcal{S}^{\delta}(v_3(1))$, and understand what the Hilbert scheme associated with the sheaves in each component is.

\subsection{Proof of Theorem \ref{M3} and Main Theorem item (d).}

We can now check the connectedness of $\M(v_3)$: $\mathcal{S}^{\delta}(v_3(1))$ is connected by Proposition \ref{S32} and the morphism $\Psi\colon \mathcal{S}^{\delta}(v_3(1)) \to \M(v_3)$ is surjective by Lemma \ref{section}, thus $\M(v_3)$ is connected.

The source $\mathcal{S}^{\delta}(v_3(1))$ is proper over $\mathbb{C}$ by Lemma \ref{CorGammaProper} and the projectivity of $\operatorname{Hilb}^{3,0}$. Since $\M(v_3)$ is separated over $\mathbb{C}$, the morphism $\Psi$ is proper, as well as surjective. Thus the images of the two irreducible components of $\mathcal{S}^{\delta}(v_3(1))$ are irreducible closed subsets whose union is $\M(v_3)$. To prove that these are exactly the two irreducible components of $\M(v_3)$, it suffices to show that neither image is contained in the other.

Let $\mathcal{S}^{\delta}(v_3(1))=\mathcal{S}_1 \cup \mathcal{S}_2$ be the decomposition of $\mathcal{S}^{\delta}(v_3(1))$ into its two irreducible components, as presented in the proof of Proposition \ref{S32}. 

Assume first that $\Psi(\mathcal{S}_1)\subset\Psi(\mathcal{S}_2) = \M(v_3)$, and take $(\E(1),s)\in\mathcal{S}_1\setminus\mathcal{S}_2$; we can choose $\E$ such that $h^0(\E(1))=1$ since a generic point ${[Y]}\in H'$ {corresponds to a nonplanar scheme}, cf. Proposition \ref{prop:gammasurj}. Then there is $(\E'(1),s')\in\mathcal{S}_2$ such that $\Psi(\E(1),s)=\Psi(\E'(1),s')$, thus $\E\simeq\E'$ and one can assume that $s,s'\in H^0(\E(1))$. Since $h^0(\E(1))=1$, it follows that $s=\lambda s'$ for some $\lambda\in\mathbb{C}^*$, thus $(\E(1),s)\simeq(\E'(1),s')$, contradicting the hypothesis that $(\E(1),s)\in\mathcal{S}_1\setminus\mathcal{S}_2$ while $(\E'(1),s')\in\mathcal{S}_2$.

Next, assume that $\Psi(\mathcal{S}_2)\subset \Psi(\mathcal{S}_1) = \M(v_3)$, and take $(\E(1),s)\in\mathcal{S}_2\setminus\mathcal{S}_1$; note that no curve $Y\in H\setminus(H\cap H')$ is planar, so $h^0(\E(1))=1$. We then obtain a contradiction similar to the previous case.

Finally, we identify the two components. Since $\Psi$ is injective on the dense open subsets of $\mathcal{S}_1$ and $\mathcal{S}_2$ where $h^0(\E(1))=1$, we have $\dim\Psi(\mathcal{S}_1)=23$ and $\dim\Psi(\mathcal{S}_2)=19$. On the other hand, $\overline{\R(v_3)}$ is an irreducible component of dimension $19$ by Theorem~\ref{thm:R3smooth}, since its generic point is reflexive while that of $\operatorname{T}(-1,3,9,1)$ is not, and $\operatorname{T}(-1,3,9,1)$ is an irreducible component of dimension $23$ by Theorem~\ref{0dcomp}. Therefore $\Psi(\mathcal{S}_2)=\overline{\R(v_3)}$ and $\Psi(\mathcal{S}_1)=\operatorname{T}(-1,3,9,1)$.

\subsection{Spectra of sheaves}\label{sec:spectra-v3}
For $[\E]\in \M(v_3)$, we will now show that the possible pairs consisting
of $s_\E=h^0(\lext^2(\E,\Oo_{\Pt}))$ and the spectrum are exactly
\[
\bigl(0,(-2,-2,-1)\bigr),\qquad\bigl(1,(-3,-2,-1)\bigr).
\]
For every such sheaf,
\[
h^1(\E(t))=s_\E\quad(t\leq-1),\qquad
h^2(\E(t))=0\quad(t\geq1),
\]
and the remaining values in the spectrum range are
\[
h^2(\E(-2))=5+s_\E,\qquad
h^2(\E(-1))=2+s_\E,\qquad h^2(\E)=s_\E.
\]
Moreover, $h^0(\E(1))=1+h^1(\E(1))\geq1$.

Indeed, by \eqref{eq:spectrum-sum-minus-one}, the three spectral integers have
sum $-5-s_\E$. Each is at least $-3$ by
Proposition \ref{charspec}. If $-3$ occurs,
connectedness uses all three positions, giving $(-3,-2,-1)$ and
$s_\E=1$. Otherwise all entries are at least $-2$, so their sum is at least $-6$ and $s_\E\leq1$. If $s_\E=1$, the spectrum would be $(-2,-2,-2)$, which violates Proposition~\ref{charspec}(b); hence $s_\E=0$, and the only spectrum with entries at least $-2$, sum $-5$, and satisfying Proposition~\ref{charspec}(b) is $(-2,-2,-1)$. Both pairs occur: every $[\E]\in\R(v_3)$ has $s_\E=0$, hence spectrum $(-2,-2,-1)$. For $s_\E=1$,
take an elementary transformation of a sheaf in $\R(-1,3,9)$ at a
point where it is locally free, as in Theorem~\ref{0dcomp}.

The cohomology formulas follow by substituting the two spectra into
Theorem~\ref{thm-spectrum}. Since $\chi(\E(1))=1$ by
\eqref{eq:RR-spectrum}, and stability gives $h^3(\E(1))=0$, the
vanishing of $h^2(\E(1))$ proves the last assertion.

\subsection{Classifying non-reflexive sheaves in \texorpdfstring{$\M(v_3)$}{M3}}
\label{sec:boundary-v3}

For a non-reflexive sheaf $[\E]\in\M(v_3)$, write
\[
0\longrightarrow\E\longrightarrow\F\longrightarrow\mathcal Q
\longrightarrow0,\qquad\F=\E^{\vee\vee}.
\]
The hull $\F$ is slope-stable and the nonzero quotient $\mathcal Q$
has support of dimension at most $1$. If its one-dimensional part has
multiplicity $\operatorname{mult}(\mathcal Q)$, then $c_2(\F)=3-\operatorname{mult}(\mathcal Q)\geq1$ by \cite[Proposition 14]{almeida2022irreducible} and \cite[Corollary 3.3]{hartshorne1980reflexive}, so $\operatorname{mult}(\mathcal Q)\leq2$.
Let $\mathcal Z$ denote the maximal zero-dimensional subsheaf of
$\mathcal Q$. Since $\F$ is reflexive and $\mathcal Q/\mathcal Z$ is
either zero or pure one-dimensional, hence Cohen--Macaulay, the local Ext sequences give
\[
\lext^2(\E,\Oo_{\Pt})\simeq\lext^3(\mathcal Q,\Oo_{\Pt})
\simeq\lext^3(\mathcal Z,\Oo_{\Pt}).
\]
Consequently,
\begin{equation}\label{eq:length}
s_\E=\operatorname{length}(\mathcal Z)\in\{0,1\},
\end{equation}
where the last assertion follows from Section~\ref{sec:spectra-v3}.
Thus a nonzero pure one-dimensional quotient corresponds to
homological dimension $1$, whereas $\mathcal Z\neq0$ corresponds to
homological dimension $2$. We first analyze the pure multiplicity-two
case, and then prove the complete boundary formula.

\subsubsection*{Cohomology of the multiplicity-two quotient}

\begin{proposition}\label{prop:quotient-cohomology}
Let $[\E]\in\M(v_3)$ satisfy $s_\E=0$,
and suppose that $\mathcal Q=\E^{\vee\vee}/\E$ has multiplicity $2$.
Then $\mathcal Q$ is pure one-dimensional, its reflexive hull belongs to
$\R(-1,1,1)$, and
\[ P_{\mathcal Q}(t)=\chi(\mathcal Q(t))=2t,
\qquad H^0(\mathcal Q)=H^1(\mathcal Q)=0. \]
\end{proposition}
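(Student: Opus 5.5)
By \eqref{eq:length}, the hypothesis $s_\E=0$ gives $\mathcal Z=0$. So the plan is to first pin down the double dual $\F=\E^{\vee\vee}$, then compute $P_{\mathcal Q}$ by Riemann--Roch, and finally obtain the cohomology vanishing from the sequence $0\to\E\to\F\to\mathcal Q\to0$.

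\emph{Step 1: $\mathcal Q$ and $\F$.} Since $\mathcal Z=0$, the sheaf $\mathcal Q$ has no zero-dimensional subsheaf, so it is pure one-dimensional. Multiplicity $2$ gives $c_2(\F)=3-2=1$, as recalled at the beginning of Subsection~\ref{sec:boundary-v3}. The hull $\F$ is a $\mu$-stable reflexive sheaf with $c_1=-1$ and $c_2=1$, so it is stable. Its third Chern class satisfies three constraints:
\begin{itemize}
\item the parity condition forces $c_3(\F)$ to be odd;
\item Hartshorne's bound gives $c_3(\F)\leq c_2^2=1$;
\item $c_3(\F)\geq0$ because $\F$ is reflexive.
\end{itemize}
Hence $c_3(\F)=1$, and $[\F]\in\R(-1,1,1)$.

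\emph{Step 2: the Hilbert polynomial.} Use $P_{\mathcal Q}(t)=\chi(\F(t))-\chi(\E(t))$ and apply \eqref{eq:RR-spectrum} to both sheaves, with Chern classes $(-1,1,1)$ and $(-1,3,7)$. The cubic terms cancel, leaving
\[
P_{\mathcal Q}(t)=\tfrac{2(2t+3)}{2}+\tfrac{1-7}{2}=2t .
\]
In particular $\chi(\mathcal Q)=0$, so it suffices to prove $H^1(\mathcal Q)=0$; then $h^0(\mathcal Q)=h^1(\mathcal Q)=0$ follows, since $H^i(\mathcal Q)=0$ for $i\geq2$ by dimension.

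\emph{Step 3: vanishing of $H^1(\mathcal Q)$.} The long exact sequence contains
\[
H^1(\F)\to H^1(\mathcal Q)\to H^2(\E).
\]
\begin{itemize}
\item $H^2(\E)=0$ by Subsection~\ref{sec:spectra-v3}: for the spectrum $(-2,-2,-1)$ with $s_\E=0$ one has $h^2(\E)=s_\E=0$.
\item For $H^1(\F)$, I would use the Serre correspondence for $\R(-1,1,1)$ (Hartshorne). Each such $\F$ satisfies $h^0(\F(1))>0$, with a section vanishing along a line $L$; this degree and genus match $c_2=1$, $c_3=1$ in Theorem~\ref{ModularSerreHilbert}. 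This gives $0\to\Oo_{\P^3}(-1)\to\F\to\mathcal I_L\to0$, hence $h^1(\F)=h^1(\mathcal I_L)=0$ because $H^0(\Oo_{\P^3})\to H^0(\Oo_L)$ is surjective.
\end{itemize}
Therefore $H^1(\mathcal Q)=0$.

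\emph{Main obstacle.} The delicate point is Step 3, namely the vanishing of $H^1(\F)$. The spectrum formula of Theorem~\ref{thm-spectrum}(a) only computes $h^1(\E(t))$ for $t\leq-1$, so it does not control $H^0(\mathcal Q)$ directly through $H^1(\E)$. This is why the argument routes through $H^1(\mathcal Q)$ and $\chi(\mathcal Q)=0$, and it depends on justifying the line-section description of $\R(-1,1,1)$ from \cite{hartshorne1980reflexive}.
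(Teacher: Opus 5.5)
Your proof is correct. Steps 1 and 2 coincide with the paper's, but in Step 3 you kill the opposite end of the cohomology of $\mathcal Q$.

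\textbf{Your route.} You prove $H^1(\mathcal Q)=0$ from $H^1(\F)\to H^1(\mathcal Q)\to H^2(\E)$. This needs the extra input $h^1(\F)=0$ for the hull, i.e.\ Hartshorne's structure result for $\R(-1,1,1)$. That can be your line-section sequence, or equivalently the resolution $0\to\Oo_{\Pt}(-2)\to\Oo_{\Pt}(-1)^{\oplus3}\to\F\to0$ that the paper quotes in the next proposition, from which $h^1(\F)=0$ is immediate.

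\textbf{The paper's route.} The paper proves $H^0(\mathcal Q)=0$ from $H^0(\F)\to H^0(\mathcal Q)\to H^1(\E)$. It uses only $h^0(\F)=0$, from stability of the hull, together with $h^1(\E)=0$. The latter does not require the spectrum formula for $h^1$. Stability and Serre duality give $h^0(\E)=h^3(\E)=0$, the spectrum gives $h^2(\E)=0$, and \eqref{eq:RR-spectrum} gives $\chi(\E)=0$, so $h^1(\E)=0$ by Euler characteristic. Then $\chi(\mathcal Q)=0$ transfers the vanishing to $H^1(\mathcal Q)$.

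\textbf{Comparison.} The ``main obstacle'' you identify is therefore not a real obstacle. The paper's argument is more economical: it needs nothing about $\F$ beyond stability, and $c_3(\F)=1$ enters only through $\chi(\mathcal Q)=0$. Your argument is equally valid, but it imports a structural fact about $\R(-1,1,1)$ that the paper does not need at this point.
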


\begin{proof}
Set $\F=\E^{\vee\vee}$, so that
\[
0\longrightarrow\E\longrightarrow\F\longrightarrow\mathcal Q
\longrightarrow0.
\]
By \eqref{eq:length}, the hypothesis $s_\E=0$ means that $\mathcal Q$
has no nonzero zero-dimensional subsheaf. Since its support has dimension
at most $1$ and its multiplicity is $2$, it is pure one-dimensional.

The reflexive hull $\F$ is slope-stable, with $c_1(\F)=-1$ and
\[
c_2(\F)=c_2(\E)-\operatorname{mult}(\mathcal Q)=3-2=1.
\]
The third Chern class bound for stable rank $2$ reflexive sheaves and the parity condition, see \cite[Corollary 2.4 and Theorem 8.2]{hartshorne1980reflexive}, give
\[
0\leq c_3(\F)\leq c_2(\F)^2=1,
\qquad c_3(\F)\equiv c_1(\F)c_2(\F)\equiv1\pmod2;
\]
thus $c_3(\F)=1.$ Subtracting the Riemann--Roch polynomials in \eqref{eq:RR-spectrum}, we obtain
\[
P_{\mathcal Q}(t) = P_{\F}(t)-P_{\E}(t) = 2t \\
\]
In particular, $\chi(\mathcal Q)=0$.

Section~\ref{sec:spectra-v3}, with $s_\E=0$,
gives $h^2(\E)=0$. Stability and Serre duality give
$h^0(\E)=h^3(\E)=0$, while Riemann--Roch gives $\chi(\E)=0$.
Consequently, $h^1(\E)=0$. Since $h^0(\F)=0$ by stability, the exact
sequence
\[
H^0(\F)\longrightarrow H^0(\mathcal Q)\longrightarrow H^1(\E)
\]
forces $H^0(\mathcal Q)=0$. A sheaf with one-dimensional support has no
cohomology in degrees at least $2$, so
\[
0=\chi(\mathcal Q)=h^0(\mathcal Q)-h^1(\mathcal Q)
\]
also gives $H^1(\mathcal Q)=0$.

\end{proof}

\begin{definition}\label{def:mult-two-locus}
We write $\mathcal C(-1,1,1,2)$ for the locus of those $[\E]\in\M(v_3)$ whose
quotient $\mathcal Q=\E^{\vee\vee}/\E$ is pure of dimension one and multiplicity
two.  
\end{definition}

By Proposition~\ref{prop:quotient-cohomology}, every sheaf $[\E]\in\mathcal C(-1,1,1,2)$ satisfies
$[\E^{\vee\vee}]\in\R(-1,1,1)$ and $P_{\mathcal Q}(t)=2t$.

\subsubsection*{Self-extensions of the multiplicity-two kernels}
\label{sec:self-extensions}

We now exhibit a locally free resolution of these kernels and deduce that they are smooth points of $\M(v_3).$

\begin{proposition}\label{prop:multiplicity-two-self-ext}
Every $[\E]\in\mathcal C(-1,1,1,2)$ admits a locally free
resolution
\begin{equation}\label{eq:kernel-minimal-resolution}
0\longrightarrow\Oo_{\Pt}(-3)^{\oplus2}
\longrightarrow\Oo_{\Pt}(-2)^{\oplus3}\oplus\Oo_{\Pt}(-1)
\longrightarrow\E\longrightarrow0.
\end{equation}
Consequently,
\[
\operatorname{Ext}^2(\E,\E)=\operatorname{Ext}^3(\E,\E)=0,
\qquad \dim\operatorname{Ext}^1(\E,\E)=19.
\]
Every such sheaf is an unobstructed, smooth point of $\M(v_3)$.
Moreover,
\[
\mathcal C(-1,1,1,2)
\subseteq\overline{\R(v_3)}
\setminus\bigl(\R(v_3)\cup \operatorname{T}(-1,3,9,1)\bigr).
\]
\end{proposition}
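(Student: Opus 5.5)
The plan is to produce the resolution from the modular Serre correspondence of Theorem~\ref{nowall}, and then read the Ext groups off that resolution. Fix $[\E]\in\mathcal C(-1,1,1,2)$ and let $\F=\E^{\vee\vee}$ and $\mathcal Q=\F/\E$. By Proposition~\ref{prop:quotient-cohomology}, $[\F]\in\R(-1,1,1)$, $P_{\mathcal Q}(t)=2t$ and $H^0(\mathcal Q)=H^1(\mathcal Q)=0$. Since $\mathcal Q$ is pure we have $s_\E=0$. By Section~\ref{sec:spectra-v3} this gives $h^1(\E(t))=0$ for all $t\le -1$ and $h^2(\E(t))=0$ for $t\ge 1$.

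\textbf{Step 1: show $H^1(\E)=0$ and locate $Y$.} Choose a nonzero $s\in H^0(\E(1))$ (Lemma~\ref{section}). This gives
\[
0\to\Oo_{\Pt}(-1)\to\E\to\mathcal I_Y\to0
\]
with $[Y]\in\operatorname{Hilb}^{3,0}$. Since $H^1(\Oo_{\Pt}(t-1))=H^2(\Oo_{\Pt}(t-1))=0$, we get $H^1(\E(t))\simeq H^1(\mathcal I_Y(t))$ for every $t$. At $t=0$, the sequence $H^0(\F)\to H^0(\mathcal Q)\to H^1(\E)\to H^1(\F)$ gives $H^1(\E)=0$. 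Here $H^0(\mathcal Q)=0$. Also $H^1(\F(t))=0$ for all $t$, because a sheaf in $\R(-1,1,1)$ has the resolution $0\to\Oo_{\Pt}(-2)\to\Oo_{\Pt}(-1)^{\oplus3}\to\F\to0$. Hence $H^1(\mathcal I_Y)=0$.

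Now I use Theorem~\ref{hilb3}. Any $Y\in H'$ contains an embedded or isolated point over a plane cubic, as in \eqref{sqc:y}. Then $H^0(\mathcal I_C)=0$ forces $H^0(\Oo_p)\hookrightarrow H^1(\mathcal I_Y)$, which contradicts $H^1(\mathcal I_Y)=0$. So $Y\in H\setminus(H\cap H')$, and by the proof of Lemma~\ref{fiber3}, $\mathcal I_Y$ has the resolution $0\to\Oo_{\Pt}(-3)^{\oplus2}\to\Oo_{\Pt}(-2)^{\oplus3}\to\mathcal I_Y\to0$. In particular $Y$ is ACM, so $H^1(\mathcal I_Y(t))=0$ for all $t$, and therefore $H^1(\E(t))=0$ for all $t$.

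\textbf{Step 2: the resolution.} We have $\Ext^1(\Oo_{\Pt}(-2),\Oo_{\Pt}(-1))=0$, so the map $\Oo_{\Pt}(-2)^{\oplus3}\to\mathcal I_Y$ lifts to $\E$. The horseshoe lemma then gives \eqref{eq:kernel-minimal-resolution}.

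\textbf{Step 3: the Ext groups.} Apply $\Hom(\cdot,\E)$ to \eqref{eq:kernel-minimal-resolution}. This gives the exact sequence
\[
H^1(\E(2))^{\oplus3}\oplus H^1(\E(1))\to H^1(\E(3))^{\oplus2}\to\Ext^2(\E,\E)\to H^2(\E(2))^{\oplus3}\oplus H^2(\E(1)),
\]
and the analogous sequence in the next degree. By Step 1 and Section~\ref{sec:spectra-v3}, $H^1(\E(t))=0$ for all $t$, $h^2(\E(t))=0$ for $t\ge1$, and $h^3(\E(t))=0$ for $t\ge -2$. Therefore $\Ext^2(\E,\E)=\Ext^3(\E,\E)=0$. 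Stability gives $\hom(\E,\E)=1$, so
\[
\operatorname{ext}^1(\E,\E)=1-\chi(\E,\E)=1-\bigl(3\chi(\E(2))+\chi(\E(1))-2\chi(\E(3))\bigr).
\]
Evaluating with \eqref{eq:RR-spectrum} for $(c_2,c_3)=(3,7)$ should give $\chi(\E(1))=1$, $\chi(\E(2))=5$ and $\chi(\E(3))=12$. Then $\chi(\E,\E)=-18$ and $\operatorname{ext}^1(\E,\E)=19$. Since $\Ext^2(\E,\E)=0$, the point $[\E]$ is unobstructed, and therefore a smooth point of $\M(v_3)$ of local dimension $19$.

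\textbf{Step 4: the inclusion.} Every closed point of the $23$-dimensional component $\operatorname{T}(-1,3,9,1)$ has Zariski tangent space of dimension at least $23$, so $[\E]\notin\operatorname{T}(-1,3,9,1)$. By Theorem~\ref{M3}, $\M(v_3)$ has only these two components, so $[\E]\in\overline{\R(v_3)}$. Finally, $\E$ is not reflexive because $\mathcal Q\ne0$, so $[\E]\notin\R(v_3)$.

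The step I expect to be the main obstacle is Step 1: excluding $Y\in H'$ and controlling the embedded-point members of $H\cap H'$. The resolution of Lemma~\ref{fiber3} is only stated off $H\cap H'$, so I must check that every member of $H'$ really has $H^1(\mathcal I_Y)\neq0$. I would first try the sequence \eqref{sqc:y}, which according to the discussion before Proposition~\ref{surjectivemap} is valid for all members of the component obtained as limits of plane cubics plus a point.
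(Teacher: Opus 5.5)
Your proof is correct, and it reaches the resolution by a genuinely different route from the paper's.

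The paper builds the resolution from the defining sequence $0\to\E\to\F\to\mathcal Q\to0$ itself:
it resolves $\mathcal Q(2)$ as a rank $0$ instanton sheaf, resolves $\F\in\R(-1,1,1)$ by $\Oo_{\Pt}(-2)\to\Oo_{\Pt}(-1)^{\oplus3}$, lifts the composite $\Oo_{\Pt}(-1)^{\oplus3}\to\mathcal Q$ to a constant matrix of rank $2$, splits off $\Oo_{\Pt}(-1)$, and cancels a redundant pair of $\Oo_{\Pt}(-2)$ summands.

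You instead use the Serre correspondence already set up. From $h^1(\E)=0$ you conclude that the curve $Y$ attached to a section of $\E(1)$ lies outside $H'$. Hence $Y$ is arithmetically Cohen--Macaulay with the determinantal resolution, and the horseshoe lemma finishes.

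Your route is shorter and proves more. For every $[\E]\in\M(v_3)$ one has $\chi(\E)=0$, $h^0(\E)=h^3(\E)=0$ and $h^2(\E)=s_\E$ (Section~\ref{sec:spectra-v3}), hence $h^1(\E)=s_\E$. Your argument therefore gives the resolution, $\Ext^2(\E,\E)=0$ and $\operatorname{ext}^1(\E,\E)=19$ for every sheaf with $s_\E=0$. This covers reflexive sheaves, $\mathcal Y_2$, $\mathcal Y_3$ and $\operatorname{X}(-1,2,4,-1,0)$, not only $\mathcal C(-1,1,1,2)$. The paper's argument, by contrast, is local to $\mathcal Q$ and $\F$ and does not depend on the global Piene--Schlessinger description of $\operatorname{Hilb}^{3,0}$.

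On the point you flagged as the main obstacle, cite closedness rather than the $c\ge4$ discussion. The set of $Y\in\operatorname{Hilb}^{3,0}$ containing some plane cubic $C$ is the image of a closed incidence variety under projection along the proper family of plane cubics. It is therefore closed, and it contains the general point of $H'$. Comparing Hilbert polynomials then gives $\mathcal I_C/\mathcal I_Y\simeq\Oo_p$, so \eqref{sqc:y} holds on all of $H'$.

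Finally, fix the arithmetic in Step~3. With $(c_2,c_3)=(3,7)$, \eqref{eq:RR-spectrum} gives $\chi(\E(2))=7$ and $\chi(\E(3))=20$, not $5$ and $12$; your values would give $3\cdot5+1-2\cdot12=-8$. The correct values give $\chi(\E,\E)=21+1-40=-18$, hence $\operatorname{ext}^1(\E,\E)=19$ as you state.
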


\begin{proof}
Write the defining sequence as
\[
0\longrightarrow\E\longrightarrow\F\xrightarrow{\varphi}\mathcal Q
\longrightarrow0,
\qquad [\F]\in\R(-1,1,1).
\]
Proposition~\ref{prop:quotient-cohomology} gives
$H^0(\mathcal Q)=H^1(\mathcal Q)=0$ and $P_{\mathcal Q}(t)=2t$.
Thus $\mathcal Q(2)$ is a rank $0$ instanton sheaf of degree $2$.
Applying \cite[Proposition 3]{jardim2017moduli} and twisting by $-2$
gives a resolution
\begin{equation}\label{eq:quotient-linear-resolution}
0\longrightarrow\Oo_{\Pt}(-3)^{\oplus2}
\xrightarrow{d_2}\Oo_{\Pt}(-2)^{\oplus4}
\xrightarrow{d_1}\Oo_{\Pt}(-1)^{\oplus2}
\xrightarrow{\epsilon}\mathcal Q\longrightarrow0.
\end{equation}
In particular, $\epsilon$ induces an isomorphism on global sections
after twisting by $1$.

The hull $\F\in\R(-1,1,1)$ has the resolution
\begin{equation}\label{eq:hull-resolution}
0\longrightarrow\Oo_{\Pt}(-2)
\xrightarrow{\alpha=(\ell_1,\ell_2,\ell_3)}
\Oo_{\Pt}(-1)^{\oplus3}\longrightarrow\F\longrightarrow0,
\end{equation}
where $\ell_1,\ell_2,\ell_3$ are independent linear forms; see the proof of \cite[Lemma 9.3]{hartshorne1980reflexive}, and note that dependent forms would have a common line of zeros, making the cokernel non-reflexive. The composite map from
$\Oo_{\Pt}(-1)^{\oplus3}$ to $\mathcal Q$, obtained via $\F$,
lifts uniquely to a constant matrix
\[
\beta \colon \Oo_{\Pt}(-1)^{\oplus3}\longrightarrow
\Oo_{\Pt}(-1)^{\oplus2}.
\]
Indeed, setting $\mathcal K_Q\coloneqq\ker\epsilon$, the resolution \eqref{eq:quotient-linear-resolution} twisted by $1$ gives $H^0(\mathcal K_Q(1))=H^1(\mathcal K_Q(1))=0$, which is exactly what is needed for the existence and uniqueness of the lift. Since $\varphi$ is surjective, $\beta\neq0$, so $1\leq\operatorname{rank}\beta\leq2$. If the rank of $\beta$ were $1$, the composite
would factor through an epimorphism $\Oo_{\Pt}(-1)\to\mathcal Q$.
Consequently, $\mathcal Q\simeq\Oo_Z(-1)$ for a closed subscheme $Z$.
The relation $\epsilon\beta\alpha=0$ would imply that $Z$ is annihilated
by a nonzero linear combination of the independent forms
$\ell_1,\ell_2,\ell_3$. Hence $Z$ lies in a plane. Purity and
multiplicity $2$ then make $Z$ a plane conic: a pure one-dimensional
subscheme of a smooth plane is an effective Cartier divisor, here of
degree $2$. But $P_{\Oo_Z(-1)}(t)=2t-1$, contradicting
$P_{\mathcal Q}(t)=2t$. Then  $\operatorname{rank}\beta = 2.$

Let $\mathcal A$ be the kernel of
$\Oo_{\Pt}(-1)^{\oplus3}\to\mathcal Q$. Since the constant matrix
$\beta$ is surjective, it splits and
\[
\mathcal A\simeq\Oo_{\Pt}(-1)\oplus\mathcal K_Q.
\]
Resolution \eqref{eq:quotient-linear-resolution} therefore gives
\[
0\longrightarrow\Oo_{\Pt}(-3)^{\oplus2}
\longrightarrow\Oo_{\Pt}(-2)^{\oplus4}\oplus\Oo_{\Pt}(-1)
\longrightarrow\mathcal A\longrightarrow0.
\]
The inclusion $\alpha\colon\Oo_{\Pt}(-2)\to\mathcal A$ has cokernel $\E$.
It lifts to the middle term of this last resolution, because
$H^1(\Oo_{\Pt}(-1))=0$. Thus
\[
0\longrightarrow\Oo_{\Pt}(-3)^{\oplus2}\oplus\Oo_{\Pt}(-2)
\longrightarrow\Oo_{\Pt}(-2)^{\oplus4}\oplus\Oo_{\Pt}(-1)
\longrightarrow\E\longrightarrow0
\]
is exact. The component of this lift into
$\Oo_{\Pt}(-2)^{\oplus4}$ is a constant vector $w$ satisfying
\[
d_1w=\beta\alpha.
\]
It is nonzero: if $w=0$, then $\beta\alpha=0$, so $(\ell_1,\ell_2,\ell_3)$ would be proportional to a constant vector spanning $\ker\beta$, contradicting the independence of the $\ell_i$. We can therefore cancel one isomorphic pair of $\Oo_{\Pt}(-2)$
summands. The resulting resolution is
\eqref{eq:kernel-minimal-resolution}, which proves our first claim. 

Now the Ext group computations are standard diagram chasing after applying
$\operatorname{Hom}(-,\E)$ to the resolution
\eqref{eq:kernel-minimal-resolution}: one gets
$\operatorname{Ext}^2(\E,\E)=\operatorname{Ext}^3(\E,\E)=0$, while
$h^0(\E(1))=1$, $h^0(\E(2))=7$ and $h^0(\E(3))=20$ give
$\dim\operatorname{Ext}^1(\E,\E)=2\cdot20-(3\cdot7+1)+1=19$.

The vanishing of the obstruction space $\operatorname{Ext}^2(\E,\E)$
then proves smoothness of the stable-sheaf moduli space at $[\E]$.

Finally, by Theorem~\ref{M3} the only irreducible components of
$\M(-1,3,7)$ are $\overline{\R(v_3)}$, of dimension $19$, and
$\operatorname{T}(-1,3,9,1)$, every closed point of which has Zariski tangent dimension at
least $23$. As $\dim\operatorname{Ext}^1(\E,\E)=19$,  $[\E]$ lies in
$\overline{\R(v_3)}$ and not in $\operatorname{T}(-1,3,9,1)$.  Additionally $[\E]$ is a smooth point
of $\overline{\R(v_3)}$. It is not reflexive, since $\E^{\vee\vee}/\E$ has
multiplicity $2$, and therefore lies in
$\overline{\R(v_3)}\setminus\R(v_3)$.
\end{proof}

\subsubsection*{The set-theoretic reflexive boundary}
\label{sec:boundary-decomposition}

We now give the complete set-theoretic description of the reflexive
boundary. Set
\[
T=\operatorname{T}(-1,3,9,1),
\]
where $T$ denotes the closed irreducible component. By Theorem~\ref{M3}, $\M(-1,3,7)=\overline{\R(v_3)}\cup T$, with
component dimensions $19$ and $23$.

For $(m,r)=(2,-2)$ or $(0,-3)$, let $ \operatorname{X}(-1,2,m,r,0)$ denote the
closure in $\M(-1,3,7)$ of the locus of kernels of epimorphisms
\[
0\longrightarrow\E\longrightarrow\F\longrightarrow\Oo_L(r)
\longrightarrow0,\qquad [\F]\in\R(-1,2,m),
\]
with $L\subset\Pt$ a line. Denote these defining kernel loci by
$\mathcal Y_2$ and $\mathcal Y_3$, respectively; thus
\[
 \operatorname{X}(-1,2,2,-2,0)=\overline{\mathcal Y_2},\qquad
 \operatorname{X}(-1,2,0,-3,0)=\overline{\mathcal Y_3}.
\]
The last parameter is zero because the defining quotient has no
zero-dimensional subsheaf. We retain all admissible lines, including
those meeting the singularities of $\F$, before taking the closures.
This extends the parameter notation of \cite[Theorem 10]{almeida2022irreducible} to the
exceptional twists $r=-2,-3$, which fall outside the hypotheses
$r\geq e$ and $m>0$ of Theorem~\ref{Xscheme}. Note that no statement of that theorem
is claimed for them, and the symbols above are defined directly by the
displayed extensions.

As before, $ \operatorname{X}(-1,2,4,-1,0)$ denotes the closure in
$\M(-1,3,7)$ of the kernels with $[\F]\in\R(-1,2,4)$,
$L\cap\operatorname{Sing}(\F)=\varnothing$, and
$\F|_L\simeq\Oo_L(-1)\oplus\Oo_L$.

\begin{proposition}\label{prop:boundary-v3}
With the notation above, we have
\begin{equation}\label{eq:boundary-v3}
\overline{\R(v_3)}\setminus\R(v_3) =
\bigl(\overline{\R(v_3)}\cap T\bigr)\cup \operatorname{X}(-1,2,4,-1,0)\cup\mathcal C(-1,1,1,2) \cup \mathcal Y_2 \cup \mathcal Y_3
\end{equation}
In addition, $ \operatorname{X}(-1,2,4,-1,0)$ is irreducible of dimension $15$, and its
generic point is a smooth point of $\M(v_3)$ lying outside $T$.

\end{proposition}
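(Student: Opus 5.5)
The plan is to classify a non-reflexive $[\E]\in\overline{\R(v_3)}$ by its quotient $\mathcal Q=\F/\E$, $\F=\E^{\vee\vee}$. By \eqref{eq:length} we have $s_\E\in\{0,1\}$ and $\operatorname{mult}(\mathcal Q)\le 2$, which splits the analysis into cases.

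\emph{Case $s_\E=1$.} Here $\E$ has homological dimension $2$. I would show that such sheaves lie in $T$; this gives the $\overline{\R(v_3)}\cap T$ term. If $\mathcal Q=\Oo_p$, then $\F\in\R(-1,3,9)$ and $\E\in T$ by construction. If $\mathcal Q$ also has a one-dimensional part, I would degenerate the point, or invoke \cite[Proposition 25]{almeida2022irreducible}, to land in $T$. Conversely, points of $T$ are kernels of $\F\to\mathcal Q$ with $\mathcal Z\ne0$. This holds on a dense open subset, and $s_\E$ is upper semicontinuous with $s_\E\le1$ everywhere. The same argument shows that generic points of $T$ are non-reflexive, and that $\R(v_3)\cap T=\varnothing$.

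\emph{Case $s_\E=0$.} Now $\mathcal Q$ is pure one-dimensional. If the multiplicity is $2$, we get exactly $\mathcal C(-1,1,1,2)$, by Proposition~\ref{prop:multiplicity-two-self-ext}. If the multiplicity is $1$, then $\mathcal Q=\Oo_L(r)$ and $\F$ is stable reflexive with $c_2=2$. The Chern class computation gives $c_3(\F)=7-c_3$-correction. Concretely, the standard formula $c_3(\E)=c_3(\F)+\ldots$ for $\mathcal Q=\Oo_L(r)$ with $c_1=-1$ yields $c_3(\F)=2r+6$. Hartshorne's bounds $0\le c_3(\F)\le4$ then leave $r\in\{-3,-2,-1\}$, that is, $\F\in\R(-1,2,0),\R(-1,2,2),\R(-1,2,4)$. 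These give $\mathcal Y_3$, $\mathcal Y_2$, and the kernels defining $\operatorname X(-1,2,4,-1,0)$. For $r=-1$, a surjection $\F\to\Oo_L(-1)$ forces the splitting type restrictions. Kernels with $L$ meeting $\sing(\F)$ or with non-generic splitting must still be shown to lie in the closure. I would check that they lie in $\operatorname X(-1,2,4,-1,0)$ or in $T$ by a degeneration argument.

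\emph{Reverse inclusion.} We need each of these loci to lie in $\overline{\R(v_3)}$ rather than only in $T$. For $\mathcal C$ this is Proposition~\ref{prop:multiplicity-two-self-ext}. For $\mathcal Y_2$ and $\mathcal Y_3$, I would compute $\operatorname{ext}^1(\E,\E)$ at the generic point, via a resolution similar to \eqref{eq:kernel-minimal-resolution}, and get a value below $23$. Then these sheaves lie outside $T$ and hence in $\overline{\R(v_3)}$. The leftover points, where the $\operatorname{ext}^1$ bound may fail, need separate treatment. Since these are homological dimension $1$ and $T$ consists of homological dimension $2$ sheaves, they cannot lie in $T$ at all, so they lie in $\overline{\R(v_3)}$ directly.

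\emph{Dimension and smoothness of $\operatorname X(-1,2,4,-1,0)$.} I count $\dim\R(-1,2,4)=11$, plus $4$ for the line, plus $\dim\P\Hom(\F|_L,\Oo_L(-1))=0$. This gives $15$, and irreducibility follows from the irreducibility of $\R(-1,2,4)$ and of the Grassmannian of lines. For smoothness, the generic $\E$ should satisfy $\Ext^2(\E,\E)=0$. I would obtain this from the local-to-global spectral sequence, or from Theorem~\ref{Xscheme}-type computations with $r=-1=e$. Then $\operatorname{ext}^1=19<23$ places $\E$ outside $T$.

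The main obstacle is the $\operatorname{Ext}^2$ vanishing for $\operatorname X(-1,2,4,-1,0)$. The relevant theorem only gives $\operatorname{ext}^1=8n+4s+5+2e=19$ in the case $0\ge r\ge e$, which is exactly our case. From that, I would deduce smoothness from $\dim\Ext^1=19$ equal to the local dimension of $\overline{\R(v_3)}$.
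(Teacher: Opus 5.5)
\textbf{Verdict: there is a genuine gap.} Your case split (by $\mathcal Z$ and by the multiplicity of $\mathcal Q$), the computation $c_3(\F)=2r+6$, and the final smoothness argument all match the paper. That smoothness argument is: $\operatorname{ext}^1=19<23$ puts the generic point of $\operatorname{X}(-1,2,4,-1,0)$ outside $T$, so $19$ is the local dimension of $\M(v_3)$ there.

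The gap is the step everything else hinges on: showing that \emph{every} $[\E]$ with $\mathcal Z\neq0$ lies in $T$. Your treatment does not establish this.

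Even for $\mathcal Q=\Oo_p$ it is not ``by construction''. $T$ is the closure of kernels of $\F\to\Oo_p$ with $\F\in\R(-1,3,9)$ \emph{locally free at $p$}, and $p$ may be a singular point of $\F$. When $\mathcal Q$ also has a one-dimensional part, ``degenerate the point'' does not specify any family. Also, \cite[Proposition 25]{almeida2022irreducible} concerns $\operatorname{X}(-1,1,1,-1,1)\subset\M(v_2)$, not $\M(v_3)$.

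The missing idea is to pass to $\mathcal G=\ker\{\F\to\mathcal Q/\mathcal Z\}$.
\begin{itemize}
\item It fits into $0\to\E\to\mathcal G\to\Oo_p\to0$.
\item It is stable, since its rank-one subsheaves are subsheaves of $\F$.
\item It has the maximal class $c(\mathcal G)=(-1,3,9)$.
\end{itemize}
Schmidt's classification \cite{schmidt2020rank} then gives a resolution $0\to\Oo_{\Pt}(-4)\xrightarrow{\alpha}\Oo_{\Pt}(-3)\oplus\Oo_{\Pt}(-1)^{\oplus2}\to\mathcal G\to0$. The quotient $\mathcal G\to\Oo_p$ becomes a functional $u$ with $u\alpha(p)=0$. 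The maps $\alpha$ satisfying this linear condition form an irreducible linear space. Over its open subset with stable torsion-free cokernel, the kernels form a flat family. The dense open subset where the entries of $\alpha$ have finitely many common zeros and $\alpha(p)\neq0$ lands in the defining locus of $T$, so $[\E]\in T$. This handles singular $p$ and the one-dimensional part at once.

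Two further steps are asserted rather than proved.

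First, you claim without argument that $s_\E$ is upper semicontinuous. What you actually need is that $\{\operatorname{hd}(\E)\le1\}$ is open in $\M(v_3)$. The paper proves this on a Quot chart with a relative presentation $0\to\mathbf K\to\pi_2^*\Oo_{\Pt}(-m)^{\oplus N}\to\mathbf E\to0$. Then $\operatorname{hd}(\mathbf E_s)\le1$ if and only if the flat kernel $\mathbf K$ is locally free along the fibre. Only this yields $\operatorname{hd}=2$ on all of $T$, hence $\R(v_3)\cap T=\varnothing$ and the reverse inclusion.

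Second, kernels of $\F\to\Oo_L(-1)$ with $[\F]\in\R(-1,2,4)$ and $L\cap\sing(\F)\neq\varnothing$ must be shown to lie in $\operatorname{X}(-1,2,4,-1,0)$. They have homological dimension $1$, so ``or in $T$'' is not available. The paper uses the resolution $0\to\Oo_{\Pt}(-3)\xrightarrow{(\ell,q_1,q_2)}\Oo_{\Pt}(-2)\oplus\Oo_{\Pt}(-1)^{\oplus2}\to\F\to0$. Surjections onto $\Oo_L(-1)$ become triples $(h,a,b)$ with $(a,b)\neq0$. The admissible $(\ell,q_1,q_2)$ form a vector bundle over the irreducible variety of pairs $(L,\beta)$, in which the locus with $L\cap\sing(\F)=\varnothing$ is dense.

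Your concern about non-generic splitting is moot. Away from $\sing(\F)$, a surjection $\F|_L\to\Oo_L(-1)$ already forces $\F|_L\simeq\Oo_L(-1)\oplus\Oo_L$.
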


\begin{proof}
We start by proving that
\begin{equation}\label{eq:T-homological-dimension}
T=\{[\E]\in\M(v_3) \mid \operatorname{hd}(\E)=2\}.
\end{equation}
 Let $[\E]\in\M(v_3)$ be a non-reflexive
sheaf, set $\F=\E^{\vee\vee}$ and $\mathcal Q=\F/\E$, and let $\mathcal Z$
be the maximal zero-dimensional subsheaf of $\mathcal Q$. The local Ext
calculation that establishes \eqref{eq:length} gives
\[
\lext^2(\E,\Oo_{\Pt})\simeq\lext^3(\mathcal Z,\Oo_{\Pt}),
\]
so that $\operatorname{hd}(\E)=2$ whenever $\mathcal Z\neq0$, while
$\operatorname{hd}(\E)=1$ otherwise, since in this case the nonzero pure
quotient makes $\E$ non-locally-free along a curve.

Assume first that $\mathcal Z\neq0$. It follows from
Section~\ref{sec:spectra-v3} and \eqref{eq:length} that
$\mathcal Z\simeq\Oo_p$. Set
$\mathcal G\coloneqq\ker\{\F\to\mathcal Q/\mathcal Z\}$, so that
$\E\subset\mathcal G\subset\F$ and
\[
0\longrightarrow\E\longrightarrow\mathcal G\longrightarrow\Oo_p
\longrightarrow0 ;
\]
in particular $c(\mathcal G)=(-1,3,9)$. Note that $\mathcal G$ need not be
reflexive, since $\F/\mathcal G\simeq\mathcal Q/\mathcal Z$ may be nonzero.
It is, however, stable: any rank one subsheaf of $\mathcal G$ is a subsheaf
of the slope-stable sheaf $\F$, and $c_1(\mathcal G)=c_1(\F)=-1$. By
\cite[Theorem 3.1(i)(b) and Section 4.4]{schmidt2020rank}, $\mathcal G$ is
an extension of $\Oo_H(-3)$ by $\Oo_{\Pt}(-1)^{\oplus2}$, and hence admits
a resolution
\[
0\longrightarrow\Oo_{\Pt}(-4)\xrightarrow{\alpha}
\mathcal V\longrightarrow\mathcal G\longrightarrow0,
\qquad\mathcal V=\Oo_{\Pt}(-3)\oplus\Oo_{\Pt}(-1)^{\oplus2}.
\]
Write $\alpha=(f,g_1,g_2)$ according to the decomposition of $\mathcal V$,
so that $f$ is a linear form and $g_1,g_2$ are cubics, and set
$A\coloneqq\Hom(\Oo_{\Pt}(-4),\mathcal V)$.

The epimorphism $\mathcal G\to\Oo_p$ is given by a nonzero functional
$u\colon\mathcal V|_p\to\mathbb C$ satisfying $u\alpha(p)=0$. For fixed $p$
and $u$, the maps satisfying this condition form a linear subspace
$\Lambda\subset A$, which is therefore irreducible. Let $W\subset\Lambda$
be the subset of maps with stable torsion-free cokernel. It contains the
given $\alpha$, and it is open in $\Lambda$: the cokernel of a nonzero map
$\Oo_{\Pt}(-4)\to\mathcal V$ is torsion-free if and only if its three
entries have no common factor, which is an open condition, and stability is
open in flat families (the cokernels form a flat family over
$W$, see below). In particular $W$ is irreducible.

We now globalize the construction over $W\times\Pt$. Denote by $\pi_1$ and
$\pi_2$ the projections onto the factors. Since $W\subset A$, there is a
tautological morphism
\[
\Phi\colon\pi_2^*\Oo_{\Pt}(-4)\longrightarrow\pi_2^*\mathcal V
\]
whose restriction to $\{\alpha\}\times\Pt$ is $\alpha$ itself. Each such
restriction is injective, since a nonzero map from a line bundle to a
torsion-free sheaf is injective; hence $\Phi$ is injective and
$\mathbf G\coloneqq\coker\Phi$ is flat over $W$, with
$\mathbf G|_{\{\alpha\}\times\Pt}\simeq\coker(\alpha)$. Restriction to
$W\times\{p\}$ followed by $u$ gives a morphism
$\pi_2^*\mathcal V\to\Oo_{W\times\{p\}}$, which vanishes on the image of
$\Phi$ because $u\alpha(p)=0$ for every $\alpha\in W$. It therefore factors
through an epimorphism $\mathbf G\to\Oo_{W\times\{p\}}$, and we set
$\mathbf E\coloneqq\ker\{\mathbf G\to\Oo_{W\times\{p\}}\}$. In the exact
sequence
\[
0\longrightarrow\mathbf E\longrightarrow\mathbf G
\longrightarrow\Oo_{W\times\{p\}}\longrightarrow0
\]
the middle and right terms are flat over $W$, so $\mathbf E$ is flat as
well, and its restriction to $\{\alpha\}\times\Pt$ is the kernel $E_\alpha$
of the epimorphism $\coker(\alpha)\to\Oo_p$ induced by $u$. Every
$E_\alpha$ is stable with Chern classes $(-1,3,7)$, so $\mathbf E$ defines a
morphism
\[
W\longrightarrow\M(v_3),\qquad \alpha\longmapsto[E_\alpha],
\]
sending the given $\alpha$ to $[\E]$.

Finally, let $W'\subset\Lambda$ be the set of maps $\alpha=(f,g_1,g_2)$ such
that $f,g_1,g_2$ have finitely many common zeros and $\alpha(p)\neq0$. Both
are nonempty open conditions on the irreducible set $\Lambda$, so $W'$ is a
nonempty open subset of $\Lambda$. For $\alpha\in W'$ the cokernel
$\coker(\alpha)$ is reflexive, because its degeneracy locus is
zero-dimensional, and locally free at $p$; it is also stable, since its
resolution gives $H^0(\coker(\alpha))=0$ and its first Chern class equals
$-1$. Thus $W'\subset W$, hence $W'$ is dense in $W$, and for $\alpha\in W'$
the sheaf $[E_\alpha]$ lies in the defining locus of $T$
described in Theorem~\ref{0dcomp}, since $\coker(\alpha)$ is a
sheaf in $\R(-1,3,9)$ which is locally free at $p$. The image of $W$
in $\M(v_3)$ is therefore contained in the closure of the image of $W'$,
which lies in $T,$ since $T$ is closed, and we conclude that $[\E]\in T$.

Conversely, we show that
\[
\M(v_3)_{\leq1}\coloneqq\{[\E]\in\M(v_3)\mid\operatorname{hd}(\E)\leq1\}
\]
is open in $\M(v_3)$. Recall that $\M(v_3)$ is a good quotient $\pi\colon R^{ss}\to\M(v_3)$ of an
open subset $R^{ss}$ of a Quot scheme, carrying a universal family
$\mathbf U$ on $R^{ss}\times\Pt$. Since every semistable sheaf with
$c_1=-1$ is stable, two points of $R^{ss}$ have the same image under $\pi$
if and only if the corresponding sheaves are isomorphic, so
$\pi^{-1}(\M(v_3)_{\leq1})=\{q\in R^{ss}\mid\operatorname{hd}(\mathbf U_q)\leq1\}$.
As $\pi$ is submersive, $\M(v_3)_{\leq1}$ is open in $\M(v_3)$
if and only if this preimage is open in $R^{ss}$.
It is therefore enough to check that, for a flat
family $\mathbf E$ of sheaves on $\Pt$ parametrized by a scheme $S$, the set
\[
S_{\leq1}\coloneqq\{s\in S\mid\operatorname{hd}(\mathbf E_s)\leq1\}
\]
is open in $S$, where $\mathbf E_s\coloneqq\mathbf E|_{\{s\}\times\Pt}$.

Since openness is a local property, we may assume that $S$ is affine.
Denoting again by $\pi_1$ and $\pi_2$ the projections of
$S\times\Pt$ onto its factors, by
the relative version of Serre's theorem,
$\pi_1^*\pi_{1*}(\mathbf E(m))\to\mathbf E(m)$ is surjective for $m\gg0$,
and $\pi_{1*}(\mathbf E(m))$, being coherent on the affine scheme $S$, is
generated by finitely many global sections, say $N$ of them. Composing, we
obtain an epimorphism $\mathbf L\to\mathbf E$ with
$\mathbf L=\pi_2^*\Oo_{\Pt}(-m)^{\oplus N}$. Let $\mathbf K$ be its kernel.
Since $\mathbf E$ is flat over $S$, so is $\mathbf K$, being the kernel of
an epimorphism between flat sheaves, and for every $s\in S$ the sequence
\[
0\longrightarrow\mathbf K_s\longrightarrow\mathbf L_s
\longrightarrow\mathbf E_s\longrightarrow0
\]
is exact. Since $\mathbf L_s$ is locally free,
$\operatorname{hd}(\mathbf E_s)\leq1$ if and only if $\mathbf K_s$ is
locally free; and since $\mathbf K$ is flat over $S$, this happens if and
only if $\mathbf K$ is locally free along $\{s\}\times\Pt$. The locus where
$\mathbf K$ fails to be locally free is closed in $S\times\Pt$, hence its
image under $\pi_1$ is closed in $S$ because $\Pt$ is proper, and the
complement of this image is exactly $S_{\leq1}$. Thus $S_{\leq1}$ is open,
as claimed.

Now let $T^\circ\subset T$ denote the defining locus of $T$, namely the
set of points $[\E]\in\M(v_3)$ such that $\E$ is the kernel of an
epimorphism $\F\to\Oo_p$ with $[\F]\in\R(-1,3,9)$ locally free at $p$; by
Theorem~\ref{0dcomp}, $T$ is the closure of $T^\circ$. For such $\E$,
applying $\lhom(-,\Oo_{\Pt})$ to the sequence
$0\to\E\to\F\to\Oo_p\to0$ and using that
$\lext^i(\F,\Oo_{\Pt})=0$ for $i\geq2$, since $\F$ is reflexive, we obtain
\[
\lext^2(\E,\Oo_{\Pt})\simeq\lext^3(\Oo_p,\Oo_{\Pt})\simeq\Oo_p\neq0 ,
\]
so that $\operatorname{hd}(\E)=2$ and $T^\circ\cap\M(v_3)_{\leq1}=\varnothing$.
On the other hand, $T\cap\M(v_3)_{\leq1}$ is an open subset of the
irreducible scheme $T$, and $T^\circ$ is dense in $T$, so
$T\cap\M(v_3)_{\leq1}$ would meet $T^\circ$ if it were nonempty. Hence
every $[\E]\in T$ satisfies $\operatorname{hd}(\E)=2$, which together with
the first part proves \eqref{eq:T-homological-dimension}. In particular
$\R(v_3)\cap T=\varnothing$, since a reflexive sheaf on $\Pt$ has
homological dimension at most $1$.

Combining \eqref{eq:T-homological-dimension} with Theorem~\ref{M3}, every
non-reflexive $[\E]\in\M(v_3)$ with $\operatorname{hd}(\E)=1$ lies in
$\overline{\R(v_3)}\setminus(\R(v_3)\cup T)$. The loci $\mathcal Y_2$,
$\mathcal Y_3$ and $\mathcal C(-1,1,1,2)$ consist of sheaves whose quotient
$\E^{\vee\vee}/\E$ is pure of dimension one, hence of homological dimension
$1$ by the first paragraph of the proof, so they lie in $\overline{\R(v_3)}\setminus\R(v_3)$. 

Before proving \eqref{eq:boundary-v3}, we check that every kernel of an
epimorphism $\F\to\Oo_L(-1)$ with $[\F]\in\R(-1,2,4)$ belongs to
$ \operatorname{X}(-1,2,4,-1,0)$, including those for which $L$ meets $\sing(\F)$;
recall that $ \operatorname{X}(-1,2,4,-1,0)$ was defined as the closure of the
kernels for which $L\cap\sing(\F)=\varnothing$. By
\cite[Lemma 9.6]{hartshorne1980reflexive}, every $[\F]\in\R(-1,2,4)$ has a
resolution
\[
0\longrightarrow\Oo_{\Pt}(-3)
\xrightarrow{\alpha=(\ell,q_1,q_2)}
\Oo_{\Pt}(-2)\oplus\Oo_{\Pt}(-1)^{\oplus2}
\longrightarrow\F\longrightarrow0,
\]
where $\ell$ is a linear form and $q_1,q_2$ are quadrics. Since
$\Hom(-,\Oo_L(-1))$ is left exact, an epimorphism $\F\to\Oo_L(-1)$ is the
same as a triple
\[
\beta=(h,a,b),\qquad h\in H^0(\Oo_L(1)),\quad (a,b)\in\mathbb C^2,
\]
satisfying $h\,\ell|_L+a\,q_1|_L+b\,q_2|_L=0$ in $H^0(\Oo_L(2))$ and
inducing a surjection $\Oo_L(-2)\oplus\Oo_L(-1)^{\oplus2}\to\Oo_L(-1)$;
the latter holds if and only if $(a,b)\neq(0,0)$, since a section of
$\Oo_L(1)$ has a zero. The pairs $(L,\beta)$ with $(a,b)\neq(0,0)$ form an
irreducible variety $B$, and for each of them the relation above is the
kernel of a surjective linear map from
$H^0(\Oo_{\Pt}(1))\oplus H^0(\Oo_{\Pt}(2))^{\oplus2}$ to $H^0(\Oo_L(2))$,
so that the admissible $\alpha$'s form a vector bundle
$\mathcal A\to B$. Let $\mathcal A^\circ\subset\mathcal A$ be the open
subset of pairs $((L,\beta),\alpha)$ for which $\coker(\alpha)$ is stable
and reflexive, and let $\mathcal A^{\circ\circ}\subset\mathcal A^\circ$ be
the open subset where moreover $L\cap\sing(\F)=\varnothing$. Both are
nonempty: for a generic $[\F]\in\R(-1,2,4)$ and a generic line $L$ we have
$L\cap\sing(\F)=\varnothing$, since $\sing(\F)$ is finite, and
$\F|_L\simeq\Oo_L(-1)\oplus\Oo_L$ by the Grauert--M\"ulich theorem, so
that projection onto the first summand gives an epimorphism
$\F\to\Oo_L(-1)$. Hence $\mathcal A^\circ$ is irreducible and
$\mathcal A^{\circ\circ}$ is dense in it.

The kernels $\ker\{\beta\colon\coker(\alpha)\to\Oo_L(-1)\}$ form a flat
family over $\mathcal A^\circ$, giving a morphism
$\mathcal A^\circ\to\M(v_3)$ whose image is precisely the set of kernels we
want to locate. For a pair in $\mathcal A^{\circ\circ}$, the quotient gives
an exact sequence
\[
0\longrightarrow\Oo_L\longrightarrow\F|_L
\longrightarrow\Oo_L(-1)\longrightarrow0
\]
on $L$, which splits because $H^1(\Oo_L(1))=0$, so the corresponding kernel
lies in the defining locus of $ \operatorname{X}(-1,2,4,-1,0)$. The image of
$\mathcal A^\circ$ is therefore contained in the closure of the image of
$\mathcal A^{\circ\circ}$, that is, in $ \operatorname{X}(-1,2,4,-1,0)$, as claimed.
Finally, every such kernel has homological dimension $1$, so it lies in
$\overline{\R(v_3)}\setminus(\R(v_3)\cup T)$ by
\eqref{eq:T-homological-dimension}, and its closure lies in
$\overline{\R(v_3)}\setminus\R(v_3)$, the non-reflexive locus being closed.

We can now prove \eqref{eq:boundary-v3}. Let
$[\E]\in\overline{\R(v_3)}\setminus\R(v_3)$, and let $\mathcal Q$ and
$\mathcal Z$ be as above. If $\mathcal Z\neq0$, then
$\operatorname{hd}(\E)=2$, so $[\E]\in\overline{\R(v_3)}\cap T$ by
\eqref{eq:T-homological-dimension}. Otherwise $\mathcal Q$ is pure of
dimension one; write $d=\operatorname{mult}(\mathcal Q)$. By
\cite[Proposition 14(ii) and (iv)]{almeida2022irreducible},
the hull $\F$
is stable with $c_2(\F)=3-d$, and $c_2(\F)\geq1$ by
\cite[Corollary 3.3]{hartshorne1980reflexive}, so that $d=1$ or $d=2$. If $d=2$, then
$[\E]\in\mathcal C(-1,1,1,2)$ by Definition~\ref{def:mult-two-locus}.

In the case $d=1$, the support of $\mathcal Q$ is a line $L$, and purity
implies that $\mathcal Q\simeq\Oo_L(r)$ for some integer $r$. Indeed, since
$\mathcal Q$ has multiplicity $1$, it is annihilated by $\mathcal I_L$ at
the generic point of $L$, so $\mathcal I_L\cdot\mathcal Q$ is a subsheaf of
$\mathcal Q$ with zero-dimensional support, hence zero. Thus $\mathcal Q$ is
a torsion-free $\Oo_L$-module of rank one on $L\simeq\p1$, that is, a line
bundle. Here $c_2(\F)=2$, and \cite[Corollary 2.4 and Theorem
8.2]{hartshorne1980reflexive} give $m\coloneqq c_3(\F)\in\{0,2,4\}$.
Subtracting the Riemann--Roch polynomials in \eqref{eq:RR-spectrum} we
obtain
\[
P_{\mathcal Q}(t)=P_\F(t)-P_\E(t)=t+\frac{m-4}{2},
\]
and comparing with $P_{\Oo_L(r)}(t)=t+r+1$ gives $r=(m-6)/2$. 
Thus $(m,r)=(4,-1)$, $(2,-2)$ or $(0,-3)$. In the first case
$[\E]\in \operatorname{X}(-1,2,4,-1,0)$, as shown above, while in the other two
$[\E]$ lies in $\mathcal Y_2$ or $\mathcal Y_3$. This proves that the
left hand side of \eqref{eq:boundary-v3} is contained in the right hand
side. Conversely, each term of the right hand side is contained in
$\overline{\R(v_3)}\setminus\R(v_3)$: for $\overline{\R(v_3)}\cap T$ this
follows from $\R(v_3)\cap T=\varnothing$, and for $\operatorname{X}(-1,2,4,-1,0)$, $\mathcal Y_2$, $\mathcal Y_3$ and $\mathcal C(-1,1,1,2)$ it was proved above. This gives
\eqref{eq:boundary-v3}.

It remains to prove the last assertion. Every $[\F]\in\R(-1,2,4)$ satisfies
$\Ext^2(\F,\F)=0$, as follows from the resolution displayed above, and
$\R(-1,2,4)$ is irreducible by \cite[Theorem 9.2]{hartshorne1980reflexive}.
Theorem~\ref{Xscheme} with $(e,n,m,r,s)=(-1,2,4,-1,0)$ then gives that
$\operatorname{X}(-1,2,4,-1,0)$ is irreducible of dimension $15$, and that
$\dim\Ext^1(\E,\E)=19$ for its generic point $[\E]$. Such a point lies in
the defining locus, so $\operatorname{hd}(\E)=1$ and $[\E]\notin T$ by
\eqref{eq:T-homological-dimension}; on the other hand
$[\E]\in\overline{\R(v_3)}$ by \eqref{eq:boundary-v3}. Since
$\overline{\R(v_3)}$ and $T$ are the only irreducible components of
$\M(v_3)$, the dimension of $\M(v_3)$ at $[\E]$ is
$\dim\overline{\R(v_3)}=19$, which coincides with the dimension of the
tangent space. Hence $[\E]$ is a smooth point of $\M(v_3)$, as desired.
\end{proof}

We emphasize that the union on display \eqref{eq:boundary-v3} is not claimed to be disjoint or to be a decomposition into irreducible components. It would be interesting to further study the loci $\mathcal Y_2$, $\mathcal Y_3$ and $\mathcal C(-1,1,1,2)$.

\bibliography{mref}
\bibliographystyle{alpha}

\end{document}